\def\cX{\mathcal{X}}
\def\mC{\mathbb{C}}
\def\mN{\mathbb{N}}
\def\mP{\mathbb{P}}
\def\mZ{\mathbb{Z}}
\newcommand{\diff}[2]{\mathrm{d}^{#1}{#2}}
\newcommand{\e}[1]{\mathrm{e}^{#1}}
\newcommand{\Hyp}[3]{\,{}_3F_2\!\left(\genfrac{}{}{0pt}{0}{#1}{#2} \,; #3\right)}
\DeclareMathOperator{\Res}{Res}
\theoremstyle{break}
\newtheorem{defin}{Definition}[section]
\newtheorem{prop}[defin]{Proposition}
\newtheorem{cor}[defin]{Corollary}
\newtheorem{lem}[defin]{Lemma}
\newtheorem{thm}[defin]{Theorem}
\newtheorem*{thm64}{Theorem 6.5}
\newtheorem*{thm71}{Theorem 7.1}
\newtheorem*{thm72}{Theorem 7.2}
\theoremstyle{definition}
\newtheorem{rem}[defin]{Remark}
\newtheorem{expl}[defin]{Example}
\numberwithin{equation}{section}
\begin{document}

\title[Analytic Continuation of Hypergeometric Functions]{Analytic Continuation of Hypergeometric Functions in the Resonant Case}

%    Only \author and \address are required; other information is
%    optional.  Remove any unused author tags.

%    author one information
% \author[short version for running head]{name for top of paper}
\author{Emanuel Scheidegger}
\address{Mathematisches Institut\\Albert--Ludwig--Universit\"at Freiburg}
\curraddr{}
\email{emanuel.scheidegger@math.uni-freiburg,de}
\thanks{}

%    author two information
% \author{}
% \address{}
% \curraddr{}
% \email{}
% \thanks{}

\subjclass[2000]{Primary }
%    The 2010 edition of the Mathematics Subject Classification is
%    now available.  If you are citing a classification from the
%    new scheme, use the following input coding instead.
%\subjclass[2010]{Primary }

\date{\today}

\begin{abstract}
  We perform the analytic continuation of solutions to the
  hypergeometric differential equation of order $n$ to the third
  regular singularity, usually denoted $z=1$, with the help of
  recurrences of their Mellin--Barnes integral representations. In the
  resonant case, there are necessarily logarithmic solutions. We apply
  the result to Picard-Fuchs equations of certain one--parameter
  families of Calabi--Yau manifolds, known as the mirror quartic and
  the mirror quintic. 
\end{abstract}

\maketitle

\section{Introduction}
\label{sec:introduction}

%Brief introduction to the hypergeometric differential equation. Something like: 
Although monodromy and analytic continuation properties of the
solutions of the hypergeometric differential equation have been
well--studied in the past, interest got revived by mirror
symmetry~\cite{Candelas:1991rm,Batyrev:1995wa}. In particular,
Picard--Fuchs equations for one--parameter families of Calabi--Yau
manifolds of dimension $n-1$ whose Gauss--Manin connection has three
regular singular points, one of which is of maximal unipotent
monodromy, is a hypergeometric differential equation of order $n$ with
resonant exponents. In the case $n=4$, such differential equations
have been classified in~\cite{Doran:2006ab}. 

The three regular singular points are usually taken to be
$z=0,1,\infty$. The analytic continuation of the solutions from $z=0$ to
$z=\infty$ is well--known due to the fact that the differential equation
after transformation to $w=\frac{1}{z}$ is again of hypergeometric
form. The analytic continuation of the solutions from $z=0$ to the
$z=1$ is more difficult to obtain since the differential equation after the
transformation to $y=1-z$ is not of hypergeometric form for
$n>2$. Nevertheless, it is known in the nonresonant case, i.e. if
there are no logarithmic solutions at
$0$~\cite{Norlund:1955ab,Buehring:1992ab}. We present here a complete
solution to the problem of analytic continuation to $z=1$ in the
resonant case. Before we discuss the solution, let us present two
applications of interest in the context of mirror
symmetry. Let $\Phi_{0}(z),\Phi_{1}(z)$ denote certain chosen fundamental
matrices of the hypergeometric differential equation near $z=0,1$.   
The variation of polarized Hodge structure of the family $\pi: \cX \to
\mP^1$ of mirror quartics given as
\[
  \cX_z = \{{x_0}^4 + {x_1}^4 + {x_2}^4 + {x_3}^4 -
  4\,z^{-\frac{1}{4}}x_0x_1x_2x_3 = 0\} \subset \mP^3 
\]
leads to a hypergeometric differential equation of order 3. Then we
prove
\begin{thm71}
  The analytic continuation of $\Phi_0(z)$ to $z=1$ is determined by
  $\Phi_0(z) = \Phi_1(1-z) M_{10}$ with
  \[
    M_{10} =
    \begin{pmatrix}
      \frac{A}{2\,\sqrt{2}\pi} & -\frac{A}{4\pi i}  & 0\\
      \frac{2}{\sqrt{2}\pi}
      \left(\frac{3\,A}{64} + \frac{1}{A}\right) &  -\frac{1}{\pi i}
      \left(\frac{3\,A}{64} - \frac{1}{A}\right) & 0 \\
      -\frac{2}{\sqrt{2}\pi} & 0 & -\frac{1}{\sqrt{2}\pi}
    \end{pmatrix}
  \]
  where $A=\frac{\Gamma(\frac{1}{8})
    \Gamma(\frac{3}{8})}{\Gamma(\frac{5}{8}) \Gamma(\frac{7}{8})}$. 
\end{thm71}
Similarly, the variation of polarized Hodge structure of the family $\pi:\cX \to
\mP^1$ of mirror quintics given as
\[
  \cX_z = \{{x_0}^5 + {x_1}^5 + {x_2}^5 + {x_3}^5 + {x_4}^5 -
  5\,z^{-\frac{1}{5}}x_0x_1x_2x_3x_4 = 0\} \subset \mP^4 
\]
leads to a hypergeometric differential equation of order 4. Then, we prove
\begin{thm72}
  Let $\Phi_{z_0}(z)$ be the fundamental matrices near $z_0=0,1$.
  The analytic continuation of $\Phi_0(z)$ to $z=1$ is determined by
  $\Phi_0(z) = \Phi_1(1-z) M_{10}$ with
  \[
    M_{10} =
    \begin{pmatrix}
     \medskip
      l_0 & -\frac{h_0}{2\pi i} & \frac{5\,k_0}{(2\pi i)^2} & 0\\
     \medskip
     w_0 & -\frac{h_1}{2\pi i} &\frac{5\,k_1}{(2\pi i)^2} & 2\pi i \\
     \medskip
     1 & 0 & 0 & 0 \\
     w_1 - \frac{7}{10}w_0 & -\frac{h_2}{2\pi i} + \frac{7}{10}\frac{h_1}{2\pi i}
     &\frac{5\,k_2}{(2\pi i)^2} - \frac{7}{10}\frac{5\,k_1}{(2\pi
       i)^2} & 0
    \end{pmatrix}
  \]
\end{thm72}
  The real constants $l_0,w_0,w_1,h_0,h_1,h_2$ and the complex
  constants $k_0,k_1,k_2$ have an
  analytic expression which is not very simple, and therefore are
  given in the main text 
  in~\eqref{eq:lqw}, \eqref{eq:hm} and~\eqref{eq:km2}. This result
  might be useful to describe the embedding of the parameter space
  $\mP^1$ into the period domain of the family $\cX$. 

We summarize how these results are obtained.
In Section~\ref{sec:hyperg-funct} we briefly review the known bases of
solutions to the hypergeometric differential equations 
following~\cite{Norlund:1955ab} in terms of integral representations
of the Mellin--Barnes type. We define the notion of resonant
exponents. In the resonant case, we introduce a new basis of 
solutions $G_p(z)$ defined in terms of a Mellin--Barnes integral as
\[
  G_p(z) := \int \frac{\diff{}{t}}{2\pi i} \e{i\pi(p-2)t} z^t \prod_{j=1}^n
  \frac{\Gamma(\alpha_j+t)}{\Gamma(1-\gamma_j+t)} \prod_{h=1}^p \Gamma(\gamma_h-t)\Gamma(1-\gamma_h+t)
\]
These solutions will be shown to have the following straightforward
power series expansion at $z=1$. 
\begin{thm64}
   For any $2 < p \leq n$, if $|z-1| < 1$, $\Re \beta_n > \Re\beta_p$,
 $\Re(\alpha_s+\gamma_j)>0$, $j=1\dots,p$, $s=p+1,\dots,n$, $\alpha_{p}+\gamma_p,
    \alpha_s+\gamma_{s+1} \not \in \mZ_{\leq 0}$, $s=2,\dots,p-1$ then
  \[
    \begin{aligned}
    G_p(z) &= \sum_{m=0}^\infty \Gamma(\alpha_1+\gamma_2) \int \frac{\diff{}{v}}{2\pi i} \e{-i\pi v} \Gamma(\alpha_1+\gamma_1+v) \Gamma(-v) \\
   &\phantom{=} \cdot \int \frac{\diff{}{s}}{2 \pi i} \frac{B_{p,m}(s)}{\Gamma(m+1)} \frac{ \Gamma(\gamma_2-s) \Gamma(\gamma_1+v-s) }{\Gamma(\alpha_1+\gamma_1+\gamma_2+v-s)}\\
   &\phantom{=} \cdot \int
    \frac{\diff{}{u}}{2\pi i} \e{-i\pi u} 
    \frac{\Gamma(-v+u)\Gamma(-u)}{\Gamma(-v)} \prod_{s=p+1}^n
    \frac{\Gamma(\alpha_s+\gamma_1+u)}{\Gamma(1-\gamma_s+\gamma_1+u)} (1-z)^m
  \end{aligned}
  \]
  If $p=2$ then
  \[
  \begin{aligned}
    G_2(z) &= \sum_{m=0}^\infty
    \frac{\Gamma(\alpha_1+\gamma_2+m)\Gamma(\alpha_2+\gamma_2+m)}{\Gamma(m+1)}
    \\
   &\phantom{=} \cdot \int \frac{\diff{}{v}}{2\pi i} \e{-i\pi v} \frac{\Gamma(\alpha_1+\gamma_1+v) \Gamma(\alpha_2+\gamma_1+v)  \Gamma(-v)}{\Gamma(\alpha_1+\alpha_2+\gamma_1+\gamma_2+m+v)} \\
   &\phantom{=} \cdot \int
    \frac{\diff{}{u}}{2\pi i} \e{-i\pi u} 
    \frac{\Gamma(-v+u)\Gamma(-u)}{\Gamma(-v)} \prod_{s=3}^n
    \frac{\Gamma(\alpha_s+\gamma_1+u)}{\Gamma(1-\gamma_s+\gamma_1+u)} (1-z)^m
  \end{aligned}
  \]
\end{thm64}
The functions $B_{p,m}(s)$ are given in terms of multiple
Mellin--Barnes integrals.

The strategy to prove these statements is to find recurrences for all
the solutions. The Mellin--Barnes integral representation of a
solution to an order $n$ equation is written in terms of a
Mellin--Barnes integral representation of a solution to an order $n-1$
equation. In this way, the problem of analytic continuation is reduced
to the one of solutions of an order $2$ equation. The latter is well--known and
briefly reviewed in Section~\ref{sec:class-hyperg-funct}. In
Section~\ref{sec:recurr-relat-order} 
we collect the known recurrences due to~\cite{Norlund:1955ab}
and~\cite{Buehring:1992ab}. We reformulate the latter in terms of
integral representations and prove a new recurrence for the solutions
$G_p(z)$. Equipped with these results, we perform the analytic
continuation on the level of integral representations in
Section~\ref{sec:analyt-cont-z=1}. In order to make contact to the
bases of solutions obtained from the 
Frobenius method in terms of power series expansions, we expand in
Section~\ref{sec:series-expansions} the integral representations
obtained in Section~\ref{sec:analyt-cont-z=1} in powers of
$1-z$. Finally, in Section~\ref{sec:applications} we apply these
series expansions to the examples presented above, the
Picard--Fuchs equations of the family of mirror quartics and of the
family of mirror quintics.
 
We wish to emphasize that in Sections~\ref{sec:hyperg-funct}
to~\ref{sec:analyt-cont-z=1} all results are formulated in terms of
integral representations. At no point, such an 
integral is evaluated and converted into a power series. This will
only be done in Section~\ref{sec:series-expansions} in order to be able to discuss
the explicit examples afterwards. For this reason, and also to make
the presentation self--contained we include proofs of some of the results due
to~\cite{Norlund:1955ab} and~\cite{Buehring:1992ab}. 

\subsection*{Acknowledgments}
\label{sec:acknowledgments}

The author is very grateful to Johanna Knapp for invaluable help with
checking the results of Section 7 and comments on the manuscript, and
to Don Zagier for fruitful discussions. Moreover, he profited from a
collaboration with Johanna Knapp and Mauricio Romo on a closely
related physics project.

\section{The hypergeometric equation}
\label{sec:hyperg-funct}

\subsection{The hypergeometric differential equation and its solutions}
\label{sec:hyperg-diff-equat}

In this subsection we review the properties of the hypergeometric differential equation and its solutions that will be needed later on. For more background see e.g.~\cite{Erdelyi:1953ab,Slater:1966ab,Heckman:2015ab}. We will, however, follow the notation of N\o rlund~\cite{Norlund:1955ab}. 

We consider the hypergeometric differential equation of order $n$
\begin{equation}
  \left( \theta\, \prod_{j=1}^{n-1} \left( \theta - \gamma_j\right) -
    z \prod_{j=1}^n \left(\theta - \alpha_j\right) \right) y(z) =
  0 
  \label{eq:DE}
\end{equation}
where $\alpha_1,\dots,\alpha_n,\gamma_1,\dots,\gamma_{n-1} \in \mathbb{C}$ and $\theta = z\frac{\diff{}{}}{\diff{}{z}}$. For convenience, to make subsequent expressions more symmetric, we also introduce $\gamma_n$ and set $\gamma_n=0$. 
This differential equation has regular singularities at $z_0=0,1,\infty$ with exponents
\[
    \begin{array}[]{c|cccccc}
    z_0= 0 & \gamma_1 & \gamma_2 & \gamma_3 & \dots & \gamma_{n-1} & 0\\
    z_0= 1 & 0 & 1 & 2 & \dots & n-2 & \beta_n\\
    z_0 = \infty & \alpha_1 & \alpha_2 & \alpha_3 & \dots & \alpha_{n-1}
    & \alpha_n
  \end{array}
\]
where we have set
$$
  \beta_n = n - 1 - \sum_{i=1}^n (\alpha_i + \gamma_i).
$$
Furthermore we define
$$
  f_n(t) = \prod_{j=1}^n
  \frac{\Gamma(\alpha_j+t)}{\Gamma(1-\gamma_j+t)} 
$$

\begin{defin}
  Let $q\in \{2,\dots,n\}$ and $E$ be a maximal subset of exponents
  $\{\gamma_{i_1},\dots,\gamma_{i_q}\}$ at
  $z_0=0$, or of exponents $\{\alpha_{i_1},\dots,\alpha_{i_q}\}$ at
  $z_0=\infty$, with the property that $\lambda
  - \mu \in \mZ$ for all $\lambda,\mu \in E$. Then $E$ is called to be
  in resonance or resonant. If there is no resonant subset of a set of
  exponents, we call the set of exponents nonresonant.
\end{defin}

\begin{rem}
  \begin{enumerate}
  \item If $\gamma_1,\dots,\gamma_q$ are resonant, N\o rlund uses the
    terminology that they ``form a group''. Since they do not -- mathematically
    speaking -- form a group, we prefer the notion of resonance which
    is well known in the context of GKZ hypergeometric
    systems~\cite{Gelfand:1989ab,Stienstra:2007ab}. The latter are a 
    natural generalization of the hypergeometric differential equation
    discussed here.
  \item The differential equation~\eqref{eq:DE} remains invariant if
    we interchange $\alpha_j$ and $\gamma_j$, $j=1,\dots,n$ and
    replace $z$ by $\frac{1}{z}$. For simplicity, we will therefore
    restrict ourselves to resonant subsets of $\{\gamma_1,\dots,\gamma_n\}$.
  \item In the following we only deal with the case that there is a
    single subset $E$ of $\{\gamma_1,\dots,\gamma_n\}$ which is in
    resonance. We reorder the exponents such that
    $E=\{\gamma_1,\dots,\gamma_q\}$. If $\{\gamma_1,\dots,\gamma_n\}$
    contains more than one resonant subset, the statements in this article
    are to be applied to each subset seperately.   
  \item If $z=0$ is a point of maximal unipotent monodromy, then
    $\gamma_i=0$, $i=1,\dots,n$, and hence
    $E=\{\gamma_1,\dots,\gamma_n\}$. 
  \end{enumerate}
\end{rem}

Next, we discuss bases of solutions to~\eqref{eq:DE} near each
singularity in terms of Mellin--Barnes integrals following N\o
rlund. %%{\bf Reference to Mellin--Barnes integrals.} 

\begin{itemize}
\item Near $z_0=0$: 

If $\gamma_1,\dots,\gamma_n$ are not in resonance, N\o rlund gives the
basis 
\begin{equation}
  \label{eq:nonresonantyj}
  \begin{aligned}
    y_j^*(z) &:= f_n(\gamma_j) z^{\gamma_j} {}_n F_{n-1}
    \left(\genfrac{}{}{0pt}{}{\alpha_1+\gamma_j,\dots,
      \alpha_n+\gamma_j}{1-\gamma_1+\gamma_j, \widehat{\dots}, 1-\gamma_{n}+\gamma_j};z\right) \\
    &:= \int \frac{\diff{}{t}}{2\pi i}\, \e{-i\pi t} z^t f_n(t)
    \Gamma(\gamma_j-t)\Gamma(1-\gamma_j+t), \qquad j=1,\dots,n
  \end{aligned}
\end{equation}
for $|\arg(-z)| < \pi$. The $\widehat{\phantom{x}}$ denotes the omission of the
parameter $1-\gamma_j+\gamma_j$. 
%% Specify the integration contour. It differs whether
%% $\alpha_s+\gamma_s = -p$ or  not.

If $\gamma_1, \dots, \gamma_q$, $2\leq q \leq n$, are in resonance,
then the solutions $y_j^*(z)$ are all equal for $1\leq j \leq q$ and
have to be replaced by
\begin{equation}
  \label{eq:resonantyj}
  y_j^*(z) := \int \diff{}{t} \, z^t f_n(t) \frac{1}{\left(1-\e{2\pi
      i(t-\gamma_1)}\right)^j} \qquad j=1,\dots,q
\end{equation}
for $0 < \arg(z) < 2\pi j$.
N\o rlund shows that after this replacement for each resonant subset,
the functions $y_j^*(z)$ form a linearly independent set of
solutions. Note that $y_1^*(z)$ is the same for both
cases. Furthermore, if we evaluate the integrals using the residue
theorem, we obtain series expansion containing polynomials in $\log
z$. Hence, we will refer to these solutions as the logarithmic
solutions. 

\item Near $z_0=\infty$: 

As already mentioned, the differential equation~\eqref{eq:DE} remains invariant if
    we interchange $\alpha_j$ and $\gamma_j$, $j=1,\dots,n$ and
    replace $z$ by $\frac{1}{z}$. A basis of solutions is given by
\begin{equation}
  \label{eq:nonresonantybarj}
  \begin{aligned}
    \overline{y}_j^*(z) &:= \prod_{k=1}^n
  \frac{\Gamma(\gamma_k+\alpha_j)}{\Gamma(1-\alpha_k+\alpha_j)}  z^{\alpha_j} {}_n F_{n-1}
    \left(\genfrac{}{}{0pt}{}{\gamma_1+\alpha_j,\dots,
      \gamma_n+\alpha_j}{1-\alpha_1+\alpha_j, \widehat{\dots}, 1-\alpha_{n}+\alpha_j};z\right) \\
    &:= \int \frac{\diff{}{t}}{2\pi i}\, \e{-i\pi t} z^t \prod_{k=1}^n
  \frac{\Gamma(\gamma_k+t)}{\Gamma(1-\alpha_k+t)} 
    \Gamma(\alpha_j-t)\Gamma(1-\alpha_j+t),  \end{aligned}
\end{equation}
for $j=1,\dots,n$ and $|\arg(-z)| < \pi$.

\item Near $z_0=1$: 

If $\gamma_1,\dots,\gamma_n$ are not in resonance, a basis
of solutions can be described as follows. There is the special
solution corresponding to the exponent $\beta_n$. N\o rlund denotes
this solution $\xi_n(z)$ if $\beta_n \not \in \mZ_{<0}$ and
$\eta_n(z)$ otherwise. We have
\[
\begin{aligned}
  \xi_n(z) &:= \xi_n\left(\genfrac{}{}{0pt}{}{\alpha_1,\dots,
      \alpha_n}{\gamma_1, \dots, \gamma_{n}};z\right) := z^{\gamma_1}
  (1-z)^{\beta_n} \sum_{k=0}^\infty c_k\,(1-z)^k\\
  \eta_n(z) &:= z^{\gamma_1} \sum_{k=0}^\infty
  \frac{c_{k-\beta_n}}{\Gamma(k+1)} \,(1-z)^k
\end{aligned}
\]
omitting the parameters when no confusion is possible.  The
coefficients $c_k$ are determined recursively by the differential
operator. Explicit formulas can be found in~\cite{Norlund:1955ab}. If
in $\xi_n(z)$ we interchange $\alpha_j$ and $\gamma_j$, $j=1,\dots,n$,
and replace $z$ by $\frac{1}{z}$, we obtain a 
solution which we denote by $\bar \xi_n(z)$ and differs from
$\xi_n(z)$ by a factor $\e{\pm i\pi \beta_n}$, similarly for
$\eta_n(z)$. Furthermore, there are $n-1$ holomorphic solutions which
again according to N\o rlund can be taken to be (for fixed $i$)  
\[
\begin{aligned}
  y_{ij}(z) &= \frac{\pi}{\sin\pi(\gamma_j-\gamma_i)} \left( y_j^*(z)
    -
    y_i^*(z) \right)\\
  &= \int \frac{\diff{}{t}}{2\pi i} z^t f_n(t)
  \frac{\pi}{\sin\pi(\gamma_j-t)}\frac{\pi}{\sin\pi(\gamma_i-t)},
\end{aligned}
\]
for $j =1,\dots,i-1,i+1,\dots,n$ and $y_j^*(z)$ as
in~\eqref{eq:nonresonantyj}.  The integral is convergent for 
$-2\pi < \arg z < 2 \pi$ and therefore represents a solution which is
regular at $z = 1$. N\o rlund only discusses these solutions when
$\gamma_i - \gamma_j \not \in \mZ$. If $\gamma_i - \gamma_j \in \mZ$
we appearently divide 0 by 0, but the quotient is well-defined as is
easily seen from the integral representation. If there are three or
more exponents in resonance, then the corresponding $y_{ij}$ become
linearly dependent. In the next subsection, we will give a basis of
solutions in the resonant case in terms of integral representations,
as well.
\end{itemize}

\subsection{Meijer $G$-functions}
\label{sec:meijer-g-functions}

We consider a special instance of the Meijer G--function~\cite[\S 5.3]{Erdelyi:1953ab}
\[
  G^{p,n}_{n,n}\left(\genfrac{}{}{0pt}{}{1-\alpha_1,\dots,
      1-\alpha_n}{\gamma_1, \dots, \gamma_{n}};z\right) =
  \int \frac{\diff{}{t}}{2\pi i} z^t \prod_{j=1}^n
  \frac{\Gamma(\alpha_j+t)}{\Gamma(1-\gamma_j+t)} \prod_{h=1}^p \Gamma(\gamma_h-t)\Gamma(1-\gamma_h+t)
\]
for $1 \leq p \leq n$. This integral converges for $|\arg z| < p\,\pi$. 
We introduce the notation
\begin{equation}
  \label{eq:Gp}
  G_p(z) :=  G_p\left(\genfrac{}{}{0pt}{}{\alpha_1,\dots,
      \alpha_n}{\gamma_1, \dots, \gamma_{n}};z\right) := G^{p,n}_{n,n}\left(\genfrac{}{}{0pt}{}{1-\alpha_1,\dots,
      1-\alpha_n}{\gamma_1, \dots,
      \gamma_{n}};(-1)^{p-2}z\right)
\end{equation}
again omitting the parameters when no confusion is possible. These
integrals define functions that are holomorphic (but not necessarily
single--valued) at $z=0$ for $p \geq 1$ and holomorphic,
single--valued at $z=1$ for $p > 1$. Moreover, they are also solutions
to the hypergeometric differential equation (for $\gamma_n=0$). In
particular, if $\gamma_1,\dots,\gamma_n$ are not in resonance, and  if
we reorder the $\gamma_i$'s such that $\gamma_j$ is in the first
position, then 
\[
    G_1\left(\genfrac{}{}{0pt}{}{\alpha_1,\dots,
      \alpha_n}{\gamma_j, \gamma_1, \dots, \gamma_{n}};z\right)  = y_j^*(z) 
\]
with $y_j^*(z)$ as in~\eqref{eq:nonresonantyj} and if we reorder the
$\gamma_i$'s such that $\gamma_i, \gamma_j$ are in the first two
positions we have, 
$$
  G_2\left(\genfrac{}{}{0pt}{}{\alpha_1,\dots,
      \alpha_n}{\gamma_i, \gamma_j,\gamma_1, \dots, \gamma_{n}};z\right) = y_{ij}(z) 
$$
More generally, we have
\begin{lem}
  \label{lem:4}
  \begin{enumerate}
  \item If $\gamma_1,\dots,\gamma_n$ are not in resonance, then
  \[
  G_p(z) = \sum_{j=1}^p \prod_{\substack{k=1\\k\not = j}}^p
  \frac{\pi\e{i\pi (p-1)\gamma_j} }{\sin\pi(\gamma_j-\gamma_k)} y_j^*(z)
  \]
  with $y_j^*(z)$ as in~\eqref{eq:nonresonantyj}.
  \item If $\gamma_1,\dots,\gamma_q$, $2 \leq q \leq n$, are resonant,
    then for $1 \leq p \leq q$:
  \[
    G_p(z) = \e{-2\pi i \gamma_1}\e{i\pi\sum_{j=1}^p\gamma_{j}} (2\pi i)^{p-1} \sum_{j=1}^{p} (-1)^{p-j} \binom{p-1}{p-j} y_j^*(z)  
  \]
  with $y_j^*(z)$ as in~\eqref{eq:resonantyj}.
\end{enumerate}
\end{lem}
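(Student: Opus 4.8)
\emph{Proof sketch.} The plan is to exploit that $G_p(z)$ and each $y_j^*(z)$ are Mellin--Barnes integrals sharing the common kernel $z^t f_n(t)$, so that it suffices to prove an identity between the remaining ``characteristic'' factors of the integrands and then integrate against this kernel over one contour. First I would apply the reflection formula $\Gamma(\gamma_h-t)\Gamma(1-\gamma_h+t)=\pi/\sin\pi(\gamma_h-t)$ to the defining integral of $G_p$ and to the $y_j^*$, so that the factor multiplying $z^t f_n(t)$ is $\e{i\pi(p-2)t}\prod_{h=1}^p\frac{\pi}{\sin\pi(\gamma_h-t)}$ for $G_p$, and $\e{-i\pi t}\frac{\pi}{\sin\pi(\gamma_j-t)}$ for the nonresonant $y_j^*$, respectively $1/(1-\e{2\pi i(t-\gamma_1)})^j$ for the resonant $y_j^*$.

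For part (1) I would substitute $U=\e{2\pi it}$ and $B_h=\e{2\pi i\gamma_h}$. Each cosecant becomes $\frac{\pi}{\sin\pi(\gamma_h-t)}=\frac{2\pi i\,B_h^{1/2}U^{1/2}}{B_h-U}$, so after collecting the half--integer powers the $G_p$ factor reduces to a constant multiple of the proper rational function $U^{p-1}/\prod_{h=1}^p(B_h-U)$ and the $y_j^*$ factor to a constant multiple of $1/(B_j-U)$. Since $\gamma_1,\dots,\gamma_n$ are nonresonant the nodes $B_h$ are pairwise distinct, so the classical partial--fraction decomposition --- equivalently Lagrange interpolation of $U\mapsto U^{p-1}$ at the $p$ nodes $B_j$ --- yields $\frac{U^{p-1}}{\prod_h(B_h-U)}=\sum_j\frac{1}{B_j-U}\frac{B_j^{p-1}}{\prod_{k\ne j}(B_j-B_k)}$. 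Reading off the coefficient of each $1/(B_j-U)$ and translating back into sines reproduces $\prod_{k\ne j}\frac{\pi}{\sin\pi(\gamma_j-\gamma_k)}$ together with the phase $\e{i\pi(p-1)\gamma_j}$ coming from the leftover powers of $B_j^{1/2}$ (with the overall sign fixed by the branch choices discussed below). As a sanity check the cases $p=1,2$ must collapse to the already recorded identities $G_1=y_j^*$ and $G_2=y_{ij}$.

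For part (2) I would use resonance directly: since $\gamma_h-\gamma_1\in\mZ$ we have $\sin\pi(\gamma_h-t)=(-1)^{\gamma_h-\gamma_1}\sin\pi(\gamma_1-t)$, so the product collapses to $(-1)^{\sum_{h\le p}(\gamma_h-\gamma_1)}\pi^p/\sin^p\pi(\gamma_1-t)$. Writing $\sin\pi(\gamma_1-t)=\frac{1}{2i}\e{i\pi(\gamma_1-t)}(1-\e{2\pi i(t-\gamma_1)})$ turns $1/\sin^p$ into $(2i)^p\e{-i\pi p(\gamma_1-t)}/(1-\e{2\pi i(t-\gamma_1)})^p$; combined with the prefactor $\e{i\pi(p-2)t}$ this leaves an overall $\e{2\pi i(p-1)t}$. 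Setting $X=\e{2\pi i(t-\gamma_1)}$ and using $\e{2\pi i(p-1)t}=\e{2\pi i(p-1)\gamma_1}X^{p-1}$ together with the binomial identity $X^{p-1}=(1-(1-X))^{p-1}=\sum_{i=0}^{p-1}\binom{p-1}{i}(-1)^i(1-X)^i$ converts $X^{p-1}/(1-X)^p$ into $\sum_{j=1}^p(-1)^{p-j}\binom{p-1}{p-j}(1-X)^{-j}$, which is precisely the claimed combination of the resonant kernels $1/(1-\e{2\pi i(t-\gamma_1)})^j$. Collecting the constants $\pi^p(2i)^p=(2\pi i)^p$ (one power of $2\pi i$ being absorbed by the differing normalisation $\int\diff{}{t}$ versus $\int\frac{\diff{}{t}}{2\pi i}$) and the accumulated phases then gives exactly the prefactor $\e{-2\pi i\gamma_1}\e{i\pi\sum_{j=1}^p\gamma_j}(2\pi i)^{p-1}$.

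The hard part will not be the algebra but the bookkeeping around it. One must track consistently the several exponential and branch factors --- the global $\e{i\pi(p-2)t}$, the half--integer powers $B_j^{1/2}=\e{i\pi\gamma_j}$, and the signs $(-1)^{\gamma_h-\gamma_1}$ --- since these are exactly what pin down the phases and signs appearing in the stated constants. Equally, the integrals live a priori on different contours and in different sectors: $G_p$ converges for $|\arg z|<p\pi$, the nonresonant $y_j^*$ only for $|\arg(-z)|<\pi$, and the resonant $y_j^*$ for $0<\arg z<2\pi j$. The algebraic identity between integrands transfers to an identity between the functions only after one checks that a single Mellin--Barnes contour, separating the poles of $\prod_l\Gamma(\alpha_l+t)$ from those of $\prod_h\Gamma(\gamma_h-t)$, may be used throughout, the equality then being extended by analytic continuation in $z$. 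Verifying this contour alignment, and the admissible range of $\arg z$ it leaves, is where I expect the genuine care to be needed.
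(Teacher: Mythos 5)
Your proposal is correct and follows essentially the same route as the paper: part (1) rests on the trigonometric/partial-fraction identity~\eqref{eq:trigidentity} (your substitution $U=\e{2\pi i t}$, $B_h=\e{2\pi i\gamma_h}$ is precisely the partial-fraction proof of that identity which the paper invokes without writing out), and part (2) is the paper's binomial--Euler manipulation read in the opposite direction, from $G_p$ to the $y_j^*$ rather than vice versa. Your closing remarks on tracking the branch phases and on reconciling the contours and sectors of convergence address a point the paper's proof passes over silently, and are a welcome addition rather than a deviation.
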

\begin{proof}
  \begin{enumerate}
  \item This is essentially Meijer's lemma. We
    present a proof that does not involve evaluating the
    residues. Instead we use the following trigonometric identity. For
    $t \not \in \{\gamma_1,\dots,\gamma_n\}$ we have:
   \begin{equation}
      \label{eq:trigidentity}
  \prod_{j=1}^p \frac{\pi}{\sin\pi(\gamma_j-t)} = \sum_{j=1}^p
  \prod_{\substack{k=1\\k\not = j}}^p
  \frac{\pi}{\sin\pi(\gamma_j-\gamma_k)} \frac{\pi\e{(p-1)\pi
      i(\gamma_j-t)}}{\sin\pi(\gamma_j-t)}
    \end{equation}
  This identity can be proven either by induction on $p$ or by using a
  partial fraction expansion. Then, by consecutively applying Euler's
  identity and~\eqref{eq:trigidentity} to~\eqref{eq:Gp}, we have
  \[
  \begin{aligned}
    G_p(z) 
    &= \int \frac{\diff{}{t}}{2\pi i}\,f_n(t) z^t
    \e{i\pi(p-2)t} \prod_{j=1}^p \frac{\pi}{\sin\pi(\gamma_j-t)}\\
    &= \int \frac{\diff{}{t}}{2\pi i}\,f_n(t) z^t \e{i\pi(p-2)t}
    \sum_{j=1}^p \prod_{\substack{k=1\\k\not=j}}^p
    \frac{\pi}{\sin\pi(\gamma_j-\gamma_k)}
    \frac{\pi\e{i\pi(p-1)(\gamma_j-t)}}{\sin\pi(\gamma_j-t)}\\
    &= \sum_{j=1}^p \prod_{\substack{k=1\\k\not=j}}^p \frac{\pi
      \e{i\pi(p-1)\gamma_j}}{\sin\pi(\gamma_j-\gamma_k)} \int
    \frac{\diff{}{t}}{2\pi i}\,f_n(t) z^t \e{-i\pi t}
    \frac{\pi}{\sin\pi(\gamma_j-t)}\\
    % &= \sum_{j=1}^p \prod_{\substack{k=1\\k\not=j}}^p \frac{\pi
    %   \e{i\pi(p-1)\gamma_j}}{\sin\pi(\gamma_j-\gamma_k)}
    % y_j^*(z)\\
  \end{aligned}
  \]
  By~\eqref{eq:nonresonantyj}, the last equation gives the claim.
  \item It is easy to see that
  \[
  \begin{aligned}
    &\sum_{j=1}^{p} (-1)^{p-j} \binom{p-1}{p-j} \int \diff{}{t} \, z^t
    f_n(t) \frac{1}{\left(1-\e{2\pi i(t-\gamma_1)}\right)^j} \\ &= \int
    \diff{}{t} \, z^t f_n(t) \frac{\e{2(p-1)\pi
        i(t-\gamma_1)}}{\left(1-\e{2\pi i(t-\gamma_1)}\right)^p}
  \end{aligned}
  \]
  Using Euler's identity for the Gamma function
  \[
  \Gamma(1+x)\Gamma(-x) = \frac{\pi}{\sin\pi(-x)} = \frac{2\pi i\, \e{i\pi x}}{1-\e{2\pi i x}}
  \]
  the expression on the right hand side can be written as
  \[
  \frac{1}{(2\pi i)^p} \int \diff{}{t} \, z^t f_n(t) \e{(p-2)\pi i(t-\gamma_1)}\left( \Gamma(1-\gamma_1+t)\Gamma(\gamma_1-t) \right)^p
  \]
  Finally, since the $\gamma_1,\dots,\gamma_q$ are resonant, we have for $1 \leq j \leq q$
  \[
    \Gamma(1-\gamma_1+t)\Gamma(t-\gamma_1) = (-1)^{\gamma_{1}-\gamma_{j}}\Gamma(1-\gamma_j+t)\Gamma(\gamma_j-t)
  \]
  so that above expression becomes
  \[
   \frac{\e{2\pi i \gamma_1}\e{-i\pi\sum_{j=1}^p\gamma_{j}}}{(2\pi
     i)^{p-1}} G_p(z) \qedhere
  \]
  \end{enumerate}
\end{proof}

\begin{rem}
  \label{rem:23}
  Lemma~\ref{lem:4}(1) is also valid if there are resonant exponents
  $\gamma_{i_1},\dots,\gamma_{i_p}$. In this case, there is a zero in
  the numerator due to $y_{i_1}^*(z) = \dots = y_{i_p}^*(z)$ with
  $y_{i_j}^*(z)$ given in~\eqref{eq:nonresonantyj}, and a zero of the
  same order in the denominator due to the sine. However, the quotient
  is well--defined as one can see by a de l'H\^opital argument, or
  more easily by the trigonometric
  identity~\eqref{eq:trigidentity}. The left-hand side is always
  well-defined. 
\end{rem}

\begin{prop}
  \label{prop:18}
  If $\gamma_1,\dots,\gamma_q$ are in resonance, then $G_p(z)$, $1\leq p \leq q$, together with the $y_j^*(z)$, $j=q+1,\dots,n$ form a basis of solutions to the hypergeometric differential equation at $z=0$.
\end{prop}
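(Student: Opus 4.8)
The plan is to reduce everything to Lemma~\ref{lem:4}(2) together with N\o rlund's result, quoted in Section~\ref{sec:hyperg-diff-equat}, that after the replacement~\eqref{eq:resonantyj} in the resonant block the functions $y_1^*(z),\dots,y_n^*(z)$ form a basis of the $n$-dimensional space of solutions of~\eqref{eq:DE} at $z=0$. Since the proposition exchanges only the first $q$ of these, namely $y_1^*,\dots,y_q^*$, for the functions $G_1,\dots,G_q$, while keeping the nonresonant solutions $y_{q+1}^*,\dots,y_n^*$ untouched, it suffices to show that the passage from $(y_1^*,\dots,y_q^*)$ to $(G_1,\dots,G_q)$ is an invertible linear transformation.

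First I would invoke that $G_1,\dots,G_q$ are themselves solutions of~\eqref{eq:DE}: this is already recorded after~\eqref{eq:Gp}, where the integrals $G_p(z)$ are shown to satisfy the equation for $\gamma_n=0$. Next, Lemma~\ref{lem:4}(2) gives, for each $p$ with $1\le p\le q$,
\[
  G_p(z) = C_p \sum_{j=1}^{p} (-1)^{p-j} \binom{p-1}{p-j}\, y_j^*(z),
  \qquad
  C_p := \e{-2\pi i \gamma_1}\e{i\pi\sum_{j=1}^p\gamma_{j}} (2\pi i)^{p-1},
\]
so that $G_p$ lies in the span of $y_1^*,\dots,y_p^*$ and the coefficient of the top vector $y_p^*$ is exactly $C_p$, since the term $j=p$ contributes $(-1)^0\binom{p-1}{0}=1$.

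The key observation is then purely linear-algebraic: writing $G_p = \sum_{j=1}^q T_{p,j}\, y_j^*$, the matrix $T=(T_{p,j})_{1\le p,j\le q}$ is lower triangular with diagonal entries $T_{p,p}=C_p$. As each $C_p$ is a nonzero product of exponentials and a power of $2\pi i$, we get $\det T = \prod_{p=1}^q C_p \neq 0$, so $T$ is invertible. Consequently $G_1,\dots,G_q$ span the same subspace as $y_1^*,\dots,y_q^*$ and are in particular linearly independent.

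Finally I would assemble the basis. Because $y_{q+1}^*,\dots,y_n^*$ are left unchanged, the global change of basis from $(y_1^*,\dots,y_n^*)$ to $(G_1,\dots,G_q,y_{q+1}^*,\dots,y_n^*)$ is block triangular, with block $T$ on the resonant part and the identity elsewhere; its determinant is $\det T \neq 0$. Since $\{y_1^*,\dots,y_n^*\}$ is a basis by N\o rlund, the transformed set is linearly independent, and having the correct cardinality $n$ it is again a basis. I do not expect a genuine obstacle: the entire content is the triangularity of the change of basis, which is handed to us by Lemma~\ref{lem:4}(2); the only points meriting a word of care are that the $G_p$ are solutions and that the nonresonant $y_j^*$, $j>q$, are unaffected and jointly independent of the resonant block, both of which are already in place.
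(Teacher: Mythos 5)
Your proposal is correct and follows essentially the same route as the paper: both rest on N\o rlund's linear independence of the resonant $y_j^*(z)$ and on the triangularity (with nonvanishing diagonal $C_p=\e{-2\pi i\gamma_1}\e{i\pi\sum_{j=1}^p\gamma_j}(2\pi i)^{p-1}$) of the change-of-basis matrix supplied by Lemma~\ref{lem:4}(2). Your additional remarks on the block structure of the full $n\times n$ transformation and on the $G_p$ being solutions are just slightly more explicit versions of what the paper leaves implicit.
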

\begin{proof}
  By N\o rlund's result, we know that $\gamma^*_j(z)$, $j=1,\dots,q$
  form a linearly independent set of solutions. The statement then
  follows from Lemma~\ref{lem:4} (b). Indeed, because
  $\binom{p-1}{j-1} = 0$ for $j > p$ and $\binom{p-1}{p-1} =1$, the
  matrix $(A_{pj})$, $1\leq j,p \leq q$ with 
\[
A_{pj} = \e{-2\pi i \gamma_1}\e{i\pi\sum_{j=1}^p\gamma_{j}} (2\pi i)^{p-1}
(-1)^{p-j} \binom{p-1}{p-j} 
\]
 is upper triangular, with non vanishing entries on the diagonal, hence invertible. 
\end{proof}

\begin{cor}
  \label{cor:19}
  \begin{enumerate}
  \item If $\gamma_1,\dots,\gamma_n$ are not in resonance, then
    $y_{ij}(z)$, for fixed $i$, $j=1,\dots,i-1,i+1,\dots,n$ and
    $\xi_n(z)$ form a basis of solutions at $z=1$.  
  \item If
    $\gamma_1,\dots,\gamma_q$ are resonant, then $\xi_n(z), G_p(z)$,
    $p=2,\dots, q$, and $y_{1j}(z)$, $j=q+1,\dots, n$ form a basis of
    solutions at $z=1$.
  \end{enumerate}
\end{cor}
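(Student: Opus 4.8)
The plan is to verify, in each case, that the proposed list contains exactly $n$ functions, all of which are genuine solutions of~\eqref{eq:DE}, and that they are linearly independent; since the solution space is $n$-dimensional this makes them a basis. That the listed functions solve the equation has already been recorded: the $G_p(z)$ are solutions for $\gamma_n=0$, the $y_{ij}(z)$ are the regular solutions at $z=1$ built from the $y_j^*(z)$, and $\xi_n(z)$ is the solution attached to the exponent $\beta_n$. Counting is immediate: in (1) there are $(n-1)+1=n$ functions, and in (2) there are $1+(q-1)+(n-q)=n$. So only linear independence remains, and I would organize it by separating the single ``special'' solution $\xi_n(z)$ from the $n-1$ holomorphic ones.

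First I would dispose of $\xi_n(z)$. All of $G_p(z)$ for $p>1$ and all $y_{ij}(z)$ are single--valued and holomorphic at $z=1$, i.e.\ are power series in $1-z$, whereas $\xi_n(z)=z^{\gamma_1}(1-z)^{\beta_n}\sum_k c_k(1-z)^k$ has leading behaviour $(1-z)^{\beta_n}$. Provided $\beta_n\notin\mZ$, this factor is multivalued at $z=1$, so $\xi_n(z)$ cannot lie in the span of the holomorphic solutions; in a vanishing linear combination the monodromy around $z=1$ therefore kills the $\xi_n$--coefficient. This reduces both parts to showing that the remaining $n-1$ holomorphic solutions are linearly independent, a statement that lives entirely in the solution space and can be checked at $z=0$.

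For part (2), the substantive case, I would expand everything in the basis $y_1^*(z),\dots,y_n^*(z)$ of solutions at $z=0$ furnished by N\o rlund (the resonant form~\eqref{eq:resonantyj} for $1\le j\le q$ and the form~\eqref{eq:nonresonantyj} for $j>q$). By Lemma~\ref{lem:4}(2) each $G_p(z)=\sum_{k=1}^p A_{pk}y_k^*(z)$ involves only $y_1^*,\dots,y_q^*$, while by definition $y_{1j}(z)=\frac{\pi}{\sin\pi(\gamma_j-\gamma_1)}\bigl(y_j^*(z)-y_1^*(z)\bigr)$ for $j>q$. Suppose $\sum_{p=2}^q a_pG_p+\sum_{j=q+1}^n b_jy_{1j}=0$. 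For each fixed $j>q$ the coefficient of $y_j^*$ on the left comes solely from $y_{1j}$ and equals $b_j\,\pi/\sin\pi(\gamma_j-\gamma_1)$; since $\gamma_j$ is not resonant with $\gamma_1$ the sine is nonzero, forcing $b_j=0$. The relation then collapses to $\sum_{p=2}^q a_pG_p=0$, and the $G_p$ are linearly independent by Proposition~\ref{prop:18}, so all $a_p=0$. Part (1) is the same argument in its simplest form: since $\{y_j^*\}$ is a basis in the nonresonant case, the $n-1$ differences $y_j^*-y_i^*$ (fixed $i$) are independent, hence so are the $y_{ij}$ after multiplication by the nonzero factors $\pi/\sin\pi(\gamma_j-\gamma_i)$.

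The only real delicacy is the treatment of $\xi_n(z)$ and the hypotheses on $\beta_n$: the clean separation above needs $\beta_n\notin\mZ$ so that $\xi_n(z)$ is genuinely multivalued at $z=1$ (if $\beta_n\in\mZ_{<0}$ one works with N\o rlund's $\eta_n(z)$ instead, and a true resonance at $z=1$ between $\beta_n$ and the integer exponents $0,1,\dots,n-2$ would produce an extra logarithm and require separate treatment). I also take as given from the preceding discussion that $G_p(z)$ for $p>1$ and the $y_{ij}(z)$ are indeed single--valued and regular at $z=1$; no integral is evaluated, in keeping with the rest of these sections.
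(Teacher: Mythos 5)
The paper states this corollary without any proof---it is presented as an immediate consequence of Lemma~\ref{lem:4}, Proposition~\ref{prop:18} and N\o rlund's results---so there is no argument of the author's to compare yours against. Your argument is the natural one, and the part concerning the $n-1$ holomorphic solutions is correct and complete: expanding a vanishing combination in N\o rlund's basis $y_1^*,\dots,y_n^*$ at $z=0$, killing the coefficients of the $y_{1j}$ by looking at the $y_j^*$-components for $j>q$ (the sine factors are nonzero precisely because $\gamma_j$ is not resonant with $\gamma_1$), and then invoking the triangularity of the matrix $(A_{pj})$ from the proof of Proposition~\ref{prop:18} to kill the coefficients of the $G_p$, is exactly the bookkeeping those two results are set up to provide; part (1) is the degenerate instance of the same computation.

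The point you downgrade to a ``delicacy''---that separating $\xi_n(z)$ by its monodromy around $z=1$ requires $\beta_n\notin\mZ$---is in fact a missing hypothesis of the corollary itself, not a removable blemish of your proof, and you should say so outright. If $\beta_n\in\{0,1,\dots,n-2\}$ then $\xi_n(z)=z^{\gamma_1}(1-z)^{\beta_n}\sum_k c_k(1-z)^k$ is itself single--valued and log--free at $z=1$, so all $n$ listed functions lie in the space of log--free solutions there, which is only $(n-1)$--dimensional once a genuine logarithm appears; no argument can then make them a basis. This is not a hypothetical: in the paper's own mirror quintic example one has $\beta_4=1$, which collides with the exponent $1$ at $z=1$, and Theorem~\ref{thm:30} exhibits $\Phi_{0,14}=2\pi i\,\Phi_{1,12}=\tfrac{i\sqrt5}{2\pi}\,\xi_4(z)$ while simultaneously expressing $\Phi_{0,12},\Phi_{0,13},\Phi_{0,14}$ (equivalently $G_2,G_3,G_4$) as combinations of $\Phi_{1,11},\Phi_{1,12},\Phi_{1,14}$ only; hence $\xi_4$ lies in the span of $G_2,G_3,G_4$ and the set $\{\xi_4,G_2,G_3,G_4\}$ is linearly dependent. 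So the correct conclusion is: under the additional hypothesis that $\beta_n$ is not a nonnegative integer $\leq n-2$ (for instance $\beta_n\notin\mZ$), your proof is complete; without it the statement fails, and the list must be repaired by adjoining the logarithmic Frobenius solution at $z=1$ in place of one of the listed functions.
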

Therefore, we will refer to the solutions $G_p(z), p>1$, in the
resonant case as logarithmic solutions, as well.

Lemma~\ref{lem:4}, Proposition~\ref{prop:18} and
Corollary~\ref{cor:19} almost completely yield the analytic
continuation from $z=0$ to $z=1$. We only lack the relation of
$\xi_n(z)$ to the solutions $y_j^*(z)$ at $z=0$. This will be reviewed
in Section~\ref{sec:solution-xi_n-1}. However, in order to compare to
the power series solutions obtained from the Frobenius method, we
need to determine the series expansions of $y_{ij}(z)$ in the
nonresonant case, and $G_p(z)$ in the resonant case at $z=1$. This
will be discussed in Section~\ref{sec:series-expansions}.

For completeness, we finish this section by reviewing the analytic
continuation from $z_0=0$ to $z_0=\infty$. The solutions $y_j^*(z)$ at
$z_0=0$ and $\overline{y}_k^*(z)$ at $z_0=\infty$ are related by
$y_j^*(z) = \sum_{k=1}^n M_{0\infty,jk} \overline{y}_k^*(z)$. The
entries of the matrix $M_{0,\infty}$ can be determined by converting
the Gamma functions in the integral representations of $y_j^*(z)$ and
$\overline{y}_k^*(z)$ into sine functions using the Euler identity and
then applying a partial fraction expansion. N\o rlund gives explicit
formulas for the coefficients $M_{0\infty,jk}$ in both the resonant
and nonresonant cases~\cite{Norlund:1955ab}. 

% Discuss the monodromy behaviour of the $G_p$. We have something like
% $$
% \begin{aligned}
%   \e{-i\pi \gamma_p} G_p(\e{2\pi i}z) &= \e{-i\pi \gamma_p}\int \frac{\diff{}{t}}{2\pi i} \e{i p \pi t}z^t \prod_{j=1}^n
%   \frac{\Gamma(\alpha_j+t)}{\Gamma(1-\gamma_j+t)} \prod_{h=1}^p
%   \Gamma(\gamma_h-t)\Gamma(1-\gamma_h+t)\\
%   &= \e{-i\pi \gamma_p} \int \frac{\diff{}{t}}{2\pi i} \e{i (p-2)\pi t} \left( 1 -
%     \frac{2\pi i \e{i\pi t}}{\Gamma(-t)\Gamma(1+t)} \right) z^t \prod_{j=1}^n
%   \frac{\Gamma(\alpha_j+t)}{\Gamma(1-\gamma_j+t)} \\
%   &\phantom{=} \cdot \prod_{h=1}^p \Gamma(\gamma_h-t)\Gamma(1-\gamma_h+t)\\
%   &= \e{-i\pi \gamma_p} \int \frac{\diff{}{t}}{2\pi i} \e{i (p-2)\pi t} z^t \prod_{j=1}^n
%   \frac{\Gamma(\alpha_j+t)}{\Gamma(1-\gamma_j+t)} \\
%   &\phantom{=} \cdot \prod_{h=1}^p \Gamma(\gamma_h-t)\Gamma(1-\gamma_h+t)\\
% \end{aligned}
% $$

\section{Analytic continuation for $n=2$}
\label{sec:class-hyperg-funct}

In this section we review the well-known results for the analytic
continuation of the classical hypergeometric function. We present them
on one hand for completeness, and on the other hand because we will
need them in the higher order situation. The case $n=2$ is special
because it is the only case where the differential equation at $z=1$ is
also in a hypergeometric form. Indeed, it is easy to see that under
the change of variables $z \to 1-z$ in~\eqref{eq:DE}, the indices
$\gamma_1$ and $\beta_2$ are interchanged.

\begin{lem}
  \label{lem:5}
  \begin{enumerate}
  \item For $\lambda_1,\lambda_2,\mu_1,\mu_2 \in \mC$ such that $\Re
    (\lambda_j+\mu_j) \not \in \mZ_{\leq 0}$ for $i,j=1,2$, 
  \[
  \begin{aligned}
    &\int \frac{\diff{}{t}}{2\pi
      i}\Gamma(\lambda_1+t) \Gamma(\lambda_2+t) \Gamma(\mu_1-t)\Gamma(\mu_2-t) \\
    & =
    \frac{\Gamma(\lambda_1+\mu_1)\Gamma(\lambda_1+\mu_2)\Gamma(\lambda_2+\mu_1)\Gamma(\lambda_2+\mu_2)}{\Gamma(\lambda_1+\lambda_2+\mu_1+\mu_2)}
  \end{aligned}
  \]
  \item For $\lambda_1,\lambda_2,\mu,\nu \in \mC$ such that
    $\Re(\nu-\lambda_1-\lambda_2-\mu) > 0$, 
  \[
  \begin{aligned}
    &\int \frac{\diff{}{s}}{2 \pi i}\e{\pm \pi i
      s}\frac{\Gamma(\lambda_1+s)\Gamma(\lambda_2+s) \Gamma(\mu-s)}{\Gamma(\nu+s)} \\
    & = \e{\pi i \mu}
    \frac{\Gamma(\lambda_1+\mu)\Gamma(\lambda_2+\mu)\Gamma(\nu-\lambda_1-\lambda_2-\mu)}{\Gamma(\nu-\lambda_1)\Gamma(\nu-\lambda_2)}.
  \end{aligned}
  \]
\end{enumerate}
\end{lem}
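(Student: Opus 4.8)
The plan is to recognize both identities as Barnes' lemmas and to prove each by residues, in every case reducing the answer to Gauss's summation ${}_2F_1(a,b;c;1)=\frac{\Gamma(c)\Gamma(c-a-b)}{\Gamma(c-a)\Gamma(c-b)}$. Concretely, I would push the vertical contour into the half--plane where the integrand decays, collect the residues at the arithmetic progressions of poles contributed by the Gamma factors of the form $\Gamma(\,\cdot-t)$, and read off the resulting series as a hypergeometric series at unit argument.

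For part (1), the first Barnes lemma, the integrand has two strings of poles to the right of the contour, at $t=\mu_1+k$ and $t=\mu_2+k$ ($k\ge 0$), coming from $\Gamma(\mu_1-t)$ and $\Gamma(\mu_2-t)$. Using $\Res_{t=\mu_i+k}\Gamma(\mu_i-t)=-\tfrac{(-1)^k}{k!}$ and $\Gamma(x+k)=\Gamma(x)(x)_k$, each string produces, up to the prefactor $\Gamma(\lambda_1+\mu_i)\Gamma(\lambda_2+\mu_i)\Gamma(\mu_j-\mu_i)$, a Gauss series ${}_2F_1(\lambda_1+\mu_i,\lambda_2+\mu_i;1+\mu_i-\mu_j;1)$. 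Summing the two contributions and cleaning up with the reflection formula $\Gamma(z)\Gamma(1-z)=\pi/\sin\pi z$ collapses them to the symmetric product on the right; the hypotheses $\lambda_j+\mu_i\notin\mZ_{\le 0}$ keep the poles off the contour and guarantee convergence.

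For part (2) the same device is even cleaner. I would close to the right across the poles of $\Gamma(\mu-s)$ at $s=\mu+k$; the residue contributes $\tfrac{(-1)^k}{k!}$ while the inserted twist gives $\e{\pm i\pi(\mu+k)}$, and the key cancellation is $\e{\pm i\pi k}=(-1)^k$, which kills the sign and leaves the clean coefficient $1/k!$. What survives is $\e{\pm i\pi\mu}\tfrac{\Gamma(\lambda_1+\mu)\Gamma(\lambda_2+\mu)}{\Gamma(\nu+\mu)}$ times ${}_2F_1(\lambda_1+\mu,\lambda_2+\mu;\nu+\mu;1)$; Gauss's theorem, whose hypothesis $\Re(\nu-\lambda_1-\lambda_2-\mu)>0$ is exactly the stated convergence condition, then cancels the $\Gamma(\nu+\mu)$ and yields the right--hand side, with the two signs propagating to $\e{\pm i\pi\mu}$.

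The main obstacle is not the algebra but the justification of the contour closure. One has to check, via Stirling, that the integrand decays on the large semicircular arcs so that their contribution vanishes and the integral equals the enclosed residue sum. In part (2) the twist $\e{\pm i\pi s}$ tilts the direction in which this works: since $|\e{\pm i\pi s}|=\e{\mp\pi\Im s}$ must combine with the $\e{-\pi|\Im s|}$ decay of the Gamma ratio, the sign of the twist dictates that one closes to the right, and this is what pins down the sign $\e{\pm i\pi\mu}$ on the right--hand side. For part (1) the only fiddly point is the final trigonometric simplification showing that the two Gauss--evaluated strings sum to the single product, which is routine once the reflection formula is applied; alternatively one may simply invoke the classical first Barnes lemma, but I prefer the self--contained residue computation to keep this review consistent with the integral--representation philosophy of the paper.
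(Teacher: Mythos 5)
Your residue computation is sound and reaches both identities, but it takes a genuinely different route from the paper: the paper offers no proof at all beyond naming part (1) as Barnes' first lemma and pointing to a reference for a proof of both identities ``only involving integral representations,'' i.e.\ one that deliberately avoids converting Mellin--Barnes integrals into residue series, in keeping with the paper's stated philosophy for Sections 2--5. Your approach is the classical one (close the contour, sum the two strings of poles, invoke Gauss at unit argument); it is more self-contained and makes the mechanism of the sign factor transparent --- in particular you correctly observe that the right-hand side of (2) should carry $\e{\pm i\pi\mu}$, matching the $\e{\pm i\pi s}$ on the left, which the paper's displayed formula suppresses. What the citation-based route buys the paper is consistency of method; what your route buys is an explicit, checkable argument. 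One caveat: your claim that the sign of the twist ``dictates'' closing to the right is not quite the right justification --- for either sign, $|\e{\pm i\pi s}|=\e{\mp\pi\Im s}$ is compensated in the right half-plane by the $\e{-\pi|\Im s|}$ decay of $1/\sin\pi(\mu-s)$, and it is the algebraic decay $|s|^{\Re(\lambda_1+\lambda_2+\mu-\nu)-1}$ guaranteed by the hypothesis that actually kills the arcs.

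The one genuine gap is in part (1): the two Gauss summations
${}_2F_1(\lambda_1+\mu_i,\lambda_2+\mu_i;1+\mu_i-\mu_j;1)$ converge only under the auxiliary condition $\Re(\lambda_1+\lambda_2+\mu_1+\mu_2)<1$, which is not among the stated hypotheses. The standard fix --- prove the identity on that open set of parameters and then remove the restriction by analytic continuation in $\lambda_1,\lambda_2,\mu_1,\mu_2$, both sides being meromorphic --- must be stated explicitly, since without it the argument does not cover all parameters allowed by the lemma. The final trigonometric collapse of the two Gauss-evaluated strings into the symmetric product, which you dismiss as routine, should also be written out (it is a two-term application of the reflection formula together with the addition formula for $\sin$), as it is where the actual identity happens.
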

\begin{proof}
  The first identity is known as Barnes' first lemma. For a nice proof
  of both identities only involving integral representations
  see~\cite{Jantzen:2012cb}. 
\end{proof}

\begin{rem}
  \label{rem:32}
  As a consequence of Lemma~\ref{lem:5}(2) we obtain the well--known
  identity of Gauss:
  \begin{equation}
   \label{eq:Gauss}
    {}_{2}F_{1}\left(\genfrac{}{}{0pt}{0}{a,b}{c};1\right) =
    \frac{\Gamma(c)\Gamma(c-a-b)}{\Gamma(c-a)\Gamma(c-b)} ,
  \end{equation}
  valid for $\Re (c-a-b) > 0$. 
\end{rem}

\begin{prop}
 \label{prop:8} 
   \[
    \begin{aligned}
    &\int \frac{\diff{}{s} }{2\pi i} \,z^s\e{-i\pi s} \frac{\Gamma(a+s)
      \Gamma(b+s) \Gamma(-s)}{\Gamma(c+s)} \\
    &=
    \frac{1}{\Gamma(c-a)\Gamma(c-b)} \int \frac{\diff{}{t}}{2\pi i}
    \Gamma(a+t) \Gamma(b+t) \Gamma(c-a-b-t) \Gamma(-t)(1-z)^t
  \end{aligned}
   \]
   or in other words
   \[
  \frac{\Gamma(a)\Gamma(b)}{\Gamma(c)} {}_2F_1 \left(\genfrac{}{}{0pt}{0}{a,b}{c};z\right) = \frac{1}{\Gamma(c-a)\Gamma(c-b)} G_2 \left(\genfrac{}{}{0pt}{0}{a,b}{c-a-b,0};1-z\right)
   \]
\end{prop}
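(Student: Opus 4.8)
The plan is to establish the integral identity directly on the level of Mellin–Barnes representations, since the second ("in other words") formulation is just the first one rewritten using the definitions of ${}_2F_1$ and $G_2$ together with the reflection formula $\Gamma(a+s)\Gamma(-s)=\cdots$; so I focus on the first display. The natural strategy, consistent with the philosophy of the paper, is to start from the left-hand integral, insert a second Mellin–Barnes contour that produces the factor $(1-z)^t$, and then evaluate the resulting inner $s$-integral in closed form using Lemma~\ref{lem:5}.

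Concretely, the first step is to expand the kernel in powers of $1-z$ by writing $z^s = (1-(1-z))^s$ and representing this via a Mellin–Barnes integral: for a suitable contour,
\[
  z^s = (1-(1-z))^s = \int \frac{\diff{}{t}}{2\pi i}\,\frac{\Gamma(-s+t)\Gamma(-t)}{\Gamma(-s)}\,(1-z)^t .
\]
This is the Mellin–Barnes representation of the binomial series, valid for $|1-z|<1$ and an appropriately indented vertical contour separating the poles of $\Gamma(-s+t)$ from those of $\Gamma(-t)$. Substituting this into the left-hand side and interchanging the order of integration (justified by absolute convergence once the contours are chosen with the correct real parts and the exponential factor $\e{-i\pi s}$ is taken into account), I isolate the $s$-integral
\[
  \int \frac{\diff{}{s}}{2\pi i}\,\e{-i\pi s}\,
  \frac{\Gamma(a+s)\Gamma(b+s)\Gamma(-s+t)}{\Gamma(c+s)} .
\]

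The second step is to evaluate this inner $s$-integral using Lemma~\ref{lem:5}(2) with the identification $\lambda_1=a$, $\lambda_2=b$, $\mu=t$, $\nu=c$ (reading the $\Gamma(-s)$ factor of that lemma as $\Gamma(-s+t)$ after shifting, or more precisely applying the lemma in the stated form with $\mu-s \mapsto t-s$). This yields, up to the factor $\e{i\pi t}$ produced by the lemma,
\[
  \e{i\pi t}\,\frac{\Gamma(a+t)\Gamma(b+t)\Gamma(c-a-b-t)}{\Gamma(c-a)\Gamma(c-b)} .
\]
Reinserting this into the remaining $t$-integral, the prefactors $\Gamma(-t)/\Gamma(-s)$ from the binomial expansion combine with $\e{i\pi t}$ and the surviving Gamma factors to reproduce exactly the integrand $\Gamma(a+t)\Gamma(b+t)\Gamma(c-a-b-t)\Gamma(-t)(1-z)^t$ divided by $\Gamma(c-a)\Gamma(c-b)$, which is the claimed right-hand side.

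The main obstacle I expect is not the algebra but the analytic bookkeeping: one must choose the two contours so that (i) each individual integral converges, (ii) Fubini applies so the order of integration can be swapped, and (iii) the hypotheses of Lemma~\ref{lem:5}(2), in particular $\Re(\nu-\lambda_1-\lambda_2-\mu)>0$, i.e. $\Re(c-a-b-t)>0$, hold along the $t$-contour. Since $t$ ranges over a vertical line, this forces the $t$-contour to lie to the left of $\Re t = \Re(c-a-b)$, while the binomial representation requires $t$ to separate the poles of $\Gamma(-s+t)$ from those of $\Gamma(-t)$; reconciling these constraints (and tracking the phase $\e{-i\pi s}$ versus the $\e{i\pi t}$ that emerges) is the delicate point. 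Once a consistent contour is exhibited and the convergence domain $|1-z|<1$ is recorded, the identity follows, and the second formulation is obtained by substituting $\Gamma(a)\Gamma(b)/\Gamma(c)\cdot{}_2F_1$ for the left-hand integral and recognizing the right-hand integral as $G_2$ with the stated parameters.
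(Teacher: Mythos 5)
Your route is sound in outline but genuinely different from the paper's. The paper first applies Barnes' first lemma (Lemma~\ref{lem:5}(1)) to replace the ratio $\Gamma(a+s)\Gamma(b+s)/\Gamma(c+s)$ by a $t$-integral, interchanges the order of integration, and then recognizes the remaining $s$-integral as the binomial identity~\eqref{eq:binomialidentity}, which directly produces $\Gamma(-t)(1-z)^t$. You run the two ingredients in the opposite order: the binomial Mellin--Barnes representation introduces the $(1-z)^t$ factor first, and the inner $s$-integral is then closed in one stroke by Lemma~\ref{lem:5}(2). Both decompositions are legitimate and the contour/Fubini issues are of the same nature in either order; the paper's ordering has the small advantage that the final $s$-integral is exactly the tabulated identity~\eqref{eq:binomialidentity}, so no exponential factors need to be matched at the end.

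There is, however, one concrete slip you must repair: the representation of $z^s$ you display is missing a phase. Writing $z^s=(1+x)^{s}$ with $x=-(1-z)$ and using the standard binomial Mellin--Barnes formula gives $(-(1-z))^t=\e{\pm i\pi t}(1-z)^t$ inside the integrand, so the correct statement (compare~\eqref{eq:binomialidentity} with the roles of $s,t$ and $z,1-z$ exchanged) is
\[
  z^s=\frac{1}{\Gamma(-s)}\int \frac{\diff{}{t}}{2\pi i}\,\e{\pm i\pi t}\,\Gamma(t-s)\Gamma(-t)\,(1-z)^t .
\]
This phase is not optional: Lemma~\ref{lem:5}(2) applied to your inner $s$-integral with $\mu=t$ produces a factor $\e{\pm i\pi t}$ on its right-hand side, and it is precisely the phase from the binomial representation (with the branch chosen consistently with the $\e{-i\pi s}$ already present) that cancels it. With the formula as you wrote it, the final $t$-integrand acquires a spurious factor $\e{\pm i\pi t}$ and does not reduce to the claimed right-hand side. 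You correctly flag the phase bookkeeping as the delicate point, but the proof is only complete once this cancellation is exhibited explicitly.
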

\begin{proof}
  This is a well--known result. 
  For the sake of exposition we give a proof. Setting $\lambda_1=a, \lambda_2=b,
  \mu_1=s, \mu_2=c-a-b$, Barnes' first lemma~\ref{lem:5}(1) can be
  rewritten as
  \[
   \frac{\Gamma(a+s)\Gamma(b+s)}{\Gamma(c+s)} =
   \frac{1}{\Gamma(c-a)\Gamma(c-b)} \int_{k-i \infty}^{k+i
     \infty} \frac{\diff{}{t}}{2\pi i} \Gamma(a+t) \Gamma(b+t)
   \Gamma(s-t) \Gamma(c-a-b-t) 
  \]
  where we have chosen the integration contour as the line $\Re t =
  k$ for $0 < k < 1$, possibly indented such that poles of $\Gamma(a+t)$ and
  $\Gamma(b+t)$ lie to the right, and the poles of $\Gamma(s-t)$ and
  $\Gamma(c-a-b-t)$ lie to the left of this line.
  The left hand side of the claim then becomes
  \[
  \begin{aligned}
    &\int \frac{\diff{}{s}}{2\pi i}\, z^s
    \e{-i\pi s} \frac{\Gamma(-s)}{\Gamma(c-a)\Gamma(c-b)}\\
   & 
  \cdot \int_{k-i \infty}^{k+i \infty} \frac{\diff{}{t}}{2\pi i}\,\Gamma(a+t)
  \Gamma(b+t) \Gamma(s-t)\Gamma(c-a-b-t) \\
  \end{aligned}
  \]
  If $k$ is chosen such that the lower bound of the distance between the
$s$ contour and the $t$ contour is positive, i.e. nonzero, then the
order of integration may be interchanged. Then the double integral
takes the form
  $$
  \begin{aligned}
     &\frac{1}{\Gamma(c-a)\Gamma(c-b)}\int_{k-i \infty}^{k+i \infty} \frac{\diff{}{t}}{2\pi i}\,\Gamma(a+t)
  \Gamma(b+t) \Gamma(c-a-b-t) \\ 
   &\cdot \int \frac{\diff{}{s}}{2\pi i}\, z^s
    \e{-i\pi s} \Gamma(-s)\Gamma(s-t)\\
  \end{aligned}
  $$
  Now the $s$ integral can be evaluated using the identity~\cite{Erdelyi:1953ab,Slater:1966ab}
  \begin{equation}
   \label{eq:binomialidentity}
   \int \frac{\diff{}{s}}{2\pi i}\, z^s
    \e{-i\pi s} \Gamma(-s)\Gamma(s-t) = \Gamma(-t) (1-z)^t
    \qedhere
  \end{equation}
\end{proof}

From this proposition, the analytic continuation can be performed for
all possible values of $a,b,c$, in particular for $c\in\mZ$, and/or
$c-a-b \in \mZ$ which correspond to the resonant situation. We refer
to~\cite{Norlund:1963ab} for details, where the function
$\frac{(-1)^{c-1}\Gamma(c)}{\Gamma(a)\Gamma(b)} G_2
\left(\genfrac{}{}{0pt}{0}{a,b}{1-c,0};z\right)$ was denoted
$g(a,b,c;z)$.

\section{Recurrence relations by order}
\label{sec:recurr-relat-order}

The general strategy for the analytic continuation of ...  is the
observation that each of the solutions $y_i^*(z), y_{ij}(z), G_p(z),
\xi_n(z)$ given in Section~\ref{sec:hyperg-funct} can be expressed in
terms of the solutions of the same type of hypergeometric equations of
order one less. This recurrence allows us to reduce the problem of the analytic
continuation to the one of order two which is well-known, see
Section~\ref{sec:class-hyperg-funct}. The reduction step will be discussed in
Section~\ref{sec:analyt-cont-z=1}.

\subsection{N\o rlund's recurrence for the solution $\xi_n$ }
\label{sec:no-rlunds-recurrence}

We begin with the special solution at $z=1$ associated to the exponent
$\beta_n$. For $\beta_n \not \in \mZ_{<0}$ N\o rlund has proven the
following recurrence as well as several
consequences~\cite{Norlund:1955ab}. These consequences will be used in
Section~\ref{sec:logar-solut}. 
\begin{lem}
 \label{lem:9} 
  If $\Re \beta_n > \Re \beta_{n-1} > -1$, then
  \[
  \xi_n(z) = \frac{\Gamma(\beta_n+1)}{\Gamma(1-\alpha_n-\gamma_n)\Gamma(\beta_{n-1}+1)} z^{\gamma_n} \int_{z}^1 t^{\alpha_n-1}(t-z)^{-\alpha_n-\gamma_n}\xi_{n-1}(t)\diff{}{t}
  \]
  % {\bf This formula must be replaced if $\beta_{n-1} \in \mZ_{<0}$ but
  %   $\beta_n \not \in \mZ_{<0}$, cf. Norlund (2.15).}
\end{lem}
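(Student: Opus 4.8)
The plan is to read the right-hand side as the image of $\xi_{n-1}$ under the Euler-type (fractional-integral) operator
\[
  (T\,Y)(z) := z^{\gamma_n}\int_z^1 t^{\alpha_n-1}(t-z)^{-\alpha_n-\gamma_n}\,Y(t)\,\mathrm{d}t,
\]
and to establish the identity in three moves: (i) $T\xi_{n-1}$ is again a solution of the order-$n$ equation~\eqref{eq:DE}; (ii) near $z=1$ it behaves like a constant multiple of $(1-z)^{\beta_n}$ with no logarithmic terms; (iii) that constant is computed and matched against the normalization of $\xi_n$. Since $\Re\beta_n>\Re\beta_{n-1}>-1$ forces $\Re\beta_n>-1$, so that $\beta_n\notin\mZ_{<0}$ and we are genuinely in the $\xi_n$ case, and since the solution of~\eqref{eq:DE} with the distinguished exponent $\beta_n$ at $z=1$ is unique up to scale, the three steps force $T\xi_{n-1}=\frac{\Gamma(1-\alpha_n-\gamma_n)\Gamma(\beta_{n-1}+1)}{\Gamma(\beta_n+1)}\,\xi_n$, which is the claim.

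The heart of the argument, and the step I expect to be the main obstacle, is (i): that $T$ intertwines the hypergeometric operators of consecutive orders. Writing $L_n=\prod_{j=1}^{n}(\theta-\gamma_j)-z\prod_{j=1}^n(\theta-\alpha_j)$ (using $\gamma_n=0$) and $L_{n-1}$ for the analogous operator in $t$ with $\vartheta=t\frac{\mathrm d}{\mathrm dt}$, I would establish a pointwise kernel identity expressing $L_n^{(z)}K(z,t)$ as $L_{n-1}^{\,*(t)}K(z,t)$ for $K(z,t)=z^{\gamma_n}t^{\alpha_n-1}(t-z)^{-\alpha_n-\gamma_n}$, where $L_{n-1}^{*}$ is the formal adjoint in $t$ ($\vartheta\mapsto-\vartheta-1$, multiplication by $t$ self-adjoint). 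The exponents $\gamma_n$, $\alpha_n-1$ and $-\alpha_n-\gamma_n$ in $K$ are precisely those that make this identity hold, and verifying it is a direct but somewhat intricate computation with $\theta_z(t-z)^{a}$ and $\vartheta_t(t-z)^{a}$. Granting the identity, one applies $L_n$ under the integral sign and transfers the operator onto $\xi_{n-1}$ by integration by parts, $\int_z^1(L_{n-1}^{*(t)}K)\xi_{n-1}\,\mathrm dt=\int_z^1 K\,(L_{n-1}\xi_{n-1})\,\mathrm dt+[\text{boundary}]$; the bulk term vanishes because $L_{n-1}\xi_{n-1}=0$. The crucial combinatorial relation between exponents of consecutive orders,
\[
  \beta_{n-1}=\beta_n+\alpha_n+\gamma_n-1,
\]
then controls everything: the integrand is $O\!\big((t-z)^{-\alpha_n-\gamma_n}\big)=O\!\big((t-z)^{\beta_n-\beta_{n-1}-1}\big)$ near $t=z$, integrable iff $\Re\beta_n>\Re\beta_{n-1}$, and $O\!\big((1-t)^{\beta_{n-1}}\big)$ near $t=1$, integrable iff $\Re\beta_{n-1}>-1$. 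These are exactly the stated hypotheses, and they simultaneously make the boundary contributions at both endpoints vanish.

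For (ii) and (iii) I would insert the Frobenius expansion $\xi_{n-1}(t)=t^{\gamma_1}(1-t)^{\beta_{n-1}}\sum_{k\ge0}c_k(1-t)^k$ and rescale by $1-t=(1-z)\sigma$, $\sigma\in[0,1]$, so that $t-z=(1-z)(1-\sigma)$ and $\mathrm dt=(1-z)\,\mathrm d\sigma$. The factors $z^{\gamma_n}$, $t^{\gamma_1}$, $t^{\alpha_n-1}$ are analytic and tend to $1$ as $z\to1$, so the leading term of $T\xi_{n-1}$ becomes
\[
  (1-z)^{\beta_{n-1}+1-\alpha_n-\gamma_n}\int_0^1\sigma^{\beta_{n-1}}(1-\sigma)^{-\alpha_n-\gamma_n}\,\mathrm d\sigma=(1-z)^{\beta_n}\,\frac{\Gamma(\beta_{n-1}+1)\Gamma(1-\alpha_n-\gamma_n)}{\Gamma(\beta_n+1)},
\]
where the power of $(1-z)$ collapses to $\beta_n$ by the exponent relation and the $\sigma$-integral is the Beta integral $B(\beta_{n-1}+1,\,1-\alpha_n-\gamma_n)$. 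The prefactor in the lemma is precisely the reciprocal of this Beta value, normalizing the leading coefficient of $T\xi_{n-1}$ to that of $\xi_n$.

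Finally, the same rescaling applied termwise turns the $k$-th summand into a Beta integral producing only the pure power $(1-z)^{\beta_n+k}$, so $T\xi_{n-1}=(1-z)^{\beta_n}\times(\text{function holomorphic and nonzero at }z=1)$, with no logarithms. Being a solution of~\eqref{eq:DE} of this form, it must coincide up to scale with the unique such solution $\xi_n$, and the scale has just been fixed; this completes the proof. An alternative to step (i) would be to run the same manipulation on the Mellin--Barnes representation of $\xi_{n-1}$ and recognize the $t$-integral directly as an Euler/Beta integral, but the direct intertwining argument seems cleanest and keeps the role of the hypotheses $\Re\beta_n>\Re\beta_{n-1}>-1$ transparent.
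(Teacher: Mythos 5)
Your proposal is correct and follows exactly the strategy the paper itself indicates (and attributes to N\o rlund): show that the generalized Euler integral on the right-hand side is a solution of~\eqref{eq:DE}, and that its expansion at $z=1$ is the pure power series with exponent $\beta_n$, the Beta integral $B(\beta_{n-1}+1,\,1-\alpha_n-\gamma_n)$ fixing the normalization via $\beta_{n-1}=\beta_n+\alpha_n+\gamma_n-1$. The paper only sketches this and defers to the reference, so your write-up simply supplies the details of the same argument.
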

\begin{proof}
  A proof is given in~\cite{Norlund:1955ab}. It relies on showing that
  the generalized Euler integral on the right hand side is a solution
  to~\eqref{eq:DE}, and that its series expansion around $z=1$ corresponds to the
  solution with index $\beta_n$.  
\end{proof}

\begin{lem}
  \label{lem:3}
  %% Norlund (2.17)
  If $\Re \beta_n > -1,
  \Re(x+\gamma_s) > 0, s=1,\dots,n$, then
  \[
  \int_0^1 t^{x-1} \xi_n(t) \diff{}{t} =
  \frac{\Gamma(\beta_n+1)\Gamma(x+\gamma_n)}{\Gamma(\beta_{n-1}+1)\Gamma(x-\alpha_n+1)}
    \int_0^1 u^{x-1}\xi_{n-1}(u) \diff{}{u}
  \]
\end{lem}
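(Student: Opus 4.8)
The plan is to feed N\o rlund's recurrence from Lemma~\ref{lem:9} into the left-hand side and then exchange the order of integration. Substituting the expression for $\xi_n(z)$ gives
\[
\int_0^1 z^{x-1}\xi_n(z)\,\diff{}{z} = \frac{\Gamma(\beta_n+1)}{\Gamma(1-\alpha_n-\gamma_n)\Gamma(\beta_{n-1}+1)}\iint_{0<z<t<1} z^{x+\gamma_n-1}\,t^{\alpha_n-1}(t-z)^{-\alpha_n-\gamma_n}\xi_{n-1}(t)\,\diff{}{z}\,\diff{}{t}.
\]
Rewriting the region $0<z<t<1$ as $t\in(0,1)$, $z\in(0,t)$ and performing the $z$-integration first isolates the inner integral $\int_0^t z^{x+\gamma_n-1}(t-z)^{-\alpha_n-\gamma_n}\diff{}{z}$, in which the dependence on $t$ enters only through an overall power.

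First I would evaluate this inner integral by the change of variables $z=ts$, which produces $t^{x-\alpha_n}$ times an Euler Beta integral:
\[
\int_0^t z^{x+\gamma_n-1}(t-z)^{-\alpha_n-\gamma_n}\,\diff{}{z} = t^{x-\alpha_n}\,\frac{\Gamma(x+\gamma_n)\Gamma(1-\alpha_n-\gamma_n)}{\Gamma(x-\alpha_n+1)}.
\]
Inserting this back, the power $t^{\alpha_n-1}\cdot t^{x-\alpha_n}=t^{x-1}$ recombines with $\xi_{n-1}(t)$, while the factor $\Gamma(1-\alpha_n-\gamma_n)$ cancels against the identical factor in the denominator coming from Lemma~\ref{lem:9}. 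What remains is precisely $\frac{\Gamma(\beta_n+1)\Gamma(x+\gamma_n)}{\Gamma(\beta_{n-1}+1)\Gamma(x-\alpha_n+1)}\int_0^1 t^{x-1}\xi_{n-1}(t)\,\diff{}{t}$, the asserted identity.

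The step requiring justification, and the main obstacle, is the interchange of integrations, which I would establish by Fubini after checking absolute convergence at the three singular loci of the integrand. Near the diagonal $z\to t$ the factor $(t-z)^{-\alpha_n-\gamma_n}$ is integrable exactly when $\Re(\alpha_n+\gamma_n)<1$; since $\beta_n-\beta_{n-1}=1-\alpha_n-\gamma_n$, this is the hypothesis $\Re\beta_n>\Re\beta_{n-1}$ of Lemma~\ref{lem:9}. Near $z=0$ the factor $z^{x+\gamma_n-1}$ is integrable because $\Re(x+\gamma_n)>0$ (the case $s=n$, recalling $\gamma_n=0$), and near $t=1$ the asymptotics $\xi_{n-1}(t)\sim(1-t)^{\beta_{n-1}}$ is integrable once $\Re\beta_{n-1}>-1$, while the conditions $\Re(x+\gamma_s)>0$ control $t^{x-1}\xi_{n-1}(t)$ as $t\to0$. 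Hence the computation above is rigorous throughout the range $\Re\beta_n>\Re\beta_{n-1}>-1$ in which the recurrence itself is valid.

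It remains to relax the hypothesis from $\Re\beta_n>\Re\beta_{n-1}$ to the stated $\Re\beta_n>-1$, which I would do by analytic continuation in the single combination $\alpha_n+\gamma_n$, holding $\beta_{n-1}$ (with $\Re\beta_{n-1}>-1$, as is needed for the right-hand integral to converge) and the remaining exponents fixed. Given $\Re(x+\gamma_s)>0$, the left-hand Mellin transform converges and is holomorphic precisely when $\Re\beta_n>-1$, and the right-hand side is meromorphic in $\alpha_n+\gamma_n$ through its Gamma factors; since the two sides agree on the open subset $\Re(\alpha_n+\gamma_n)<1$, they agree on the whole domain, proving the lemma.
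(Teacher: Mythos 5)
Your proof is correct and follows essentially the same route as the paper: substitute N\o rlund's recurrence from Lemma~\ref{lem:9}, interchange the order of integration over the triangle $0<z<t<1$ (the paper invokes a Dirichlet-type interchange formula where you invoke Fubini with explicit convergence checks), and evaluate the resulting inner integral as an Euler Beta integral so that $\Gamma(1-\alpha_n-\gamma_n)$ cancels. Your closing analytic-continuation step in $\alpha_n+\gamma_n$ to pass from $\Re\beta_n>\Re\beta_{n-1}$ to $\Re\beta_n>-1$ is a welcome extra detail that the paper leaves implicit (it only remarks on the further relaxation to $\beta_n\notin\mZ_{<0}$).
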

\begin{proof}
  This is again due to~\cite{Norlund:1955ab}. The integral on the left
  hand side converges for $\Re \beta_n > -1, \Re(x+\gamma_s) > 0,
  s=1,\dots,n$, since it is a linear function of the solutions
  $y_1^*(z), \dots, y_n^*(z)$. Substituting Lemma~\ref{lem:9} yields
  \[
  \begin{aligned}
    &\int_0^1 t^{x-1} \xi_n(t) \diff{}{t} \\
    &=
    \frac{\Gamma(\beta_n+1)}{\Gamma(1-\alpha_n-\gamma_n)\Gamma(\beta_{n-1}+1)}
    \int_0^1 t^{x-1+\gamma_n} \int_{t}^1
    u^{\alpha_n-1}(u-t)^{-\alpha_n-\gamma_n}\xi_{n-1}(u)\diff{}{u}
    \diff{}{t}\\
    &= \frac{\Gamma(\beta_n+1)}{\Gamma(1-\alpha_n-\gamma_n)\Gamma(\beta_{n-1}+1)}
    \int_{0}^1
    u^{\alpha_n-1}\xi_{n-1}(u)\int_0^u t^{x-1+\gamma_n} (u-t)^{-\alpha_n-\gamma_n}
    \diff{}{t}\diff{}{u}\\
    &= \frac{\Gamma(\beta_n+1)}{\Gamma(1-\alpha_n-\gamma_n)\Gamma(\beta_{n-1}+1)}
    \int_{0}^1
    u^{x-1}\xi_{n-1}(u) \diff{}{u} \int_0^1 t^{x-1+\gamma_n} (1-t)^{-\alpha_n-\gamma_n}
    \diff{}{t}\\
    &=
    \frac{\Gamma(\beta_n+1)}{\Gamma(1-\alpha_n-\gamma_n)\Gamma(\beta_{n-1}+1)}
    \frac{\Gamma(x+\gamma_n)\Gamma(-\alpha_n-\gamma_n+1)}{\Gamma(x-\alpha_n+1)} 
    \int_{0}^1 u^{x-1}\xi_{n-1}(u) \diff{}{u} \\
    &=
    \frac{\Gamma(\beta_n+1) \Gamma(x+\gamma_n)}{\Gamma(\beta_{n-1}+1)
      \Gamma(x-\alpha_n+1)} \int_{0}^1 u^{x-1}\xi_{n-1}(u) \diff{}{u} \\
  \end{aligned}
  \]
For the second equality, we have used a generalization of a formula of
Dichichlet's for changing the order of
integration~\cite{Hurwitz:1908ab}. Moreover, ~\cite{Norlund:1955ab}
shows that  the condition $\Re\beta_n > -1$ can be relaxed to $\beta_n \not \in
\mZ_{<0}$. 
\end{proof}

\begin{lem}
  \label{lem:1}
  %% Norlund (2.8)
  For $1\leq p < n$, and if $\Re \beta_n > \Re\beta_p,
  \Re(x+\alpha_s) > 0, s=p+1, \dots,n$, then
  \[
  \int_0^z t^{x-1} \bar\xi_{n-p}\left(\genfrac{}{}{0pt}{0}{\alpha_{p+1},\dots,\alpha_n}{\gamma_{p+1},\dots,\gamma_n};\frac{z}{t}\right) \diff{}{t} = z^x\Gamma(\beta_n-\beta_p)
    \prod_{s=p+1}^n \frac{\Gamma(\alpha_s+x)}{\Gamma(1-\gamma_s+x)}   
  \]
\end{lem}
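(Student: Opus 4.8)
\emph{The plan} is to reduce the left-hand side to a Mellin transform of a single hypergeometric solution over $(0,1)$ and then evaluate that transform by iterating the recurrence of Lemma~\ref{lem:3}. The first move is the substitution $u = z/t$, which turns the left-hand side into $z^x \int_1^\infty u^{-x-1}\,\bar\xi_{n-p}(u)\,\diff{}{u}$, exhibiting it (up to the prefactor $z^x$) as a Mellin transform of $\bar\xi_{n-p}$ over $(1,\infty)$. By definition $\bar\xi_{n-p}(u)$ is the solution $\xi_{n-p}^\vee(1/u)$ of the order $n-p$ equation with parameters $\alpha_{p+1},\dots,\alpha_n$ and $\gamma_{p+1},\dots,\gamma_n$ in which $\alpha_s$ and $\gamma_s$ have been interchanged; a second substitution $v = 1/u$ therefore rewrites the expression, with the same variable $x$, as $z^x \int_0^1 v^{x-1}\,\xi_{n-p}^\vee(v)\,\diff{}{v}$. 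On the real interval $(0,1)$ the factor $(1-v)^{\beta^{(n-p)}}$ in $\xi_{n-p}^\vee$ is unambiguous, so no branch ambiguity is introduced. The stated hypotheses are exactly the convergence conditions for this transform: $\Re(x+\alpha_s)>0$ controls the endpoint $t=0$ (i.e.\ $v=0$), while $\Re\beta_n>\Re\beta_p$ controls $t=z$ (i.e.\ $v=1$), since $\beta_n-\beta_p = \beta^{(n-p)}+1$; under the interchange $\alpha\leftrightarrow\gamma$ these match the hypotheses of Lemma~\ref{lem:3}.

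Next I would apply the $\alpha\leftrightarrow\gamma$ swapped form of the recurrence in Lemma~\ref{lem:3} to $\int_0^1 v^{x-1}\,\xi_{n-p}^\vee(v)\,\diff{}{v}$. Under this swap the factor $\frac{\Gamma(x+\gamma_n)}{\Gamma(x-\alpha_n+1)}$ produced there becomes precisely $\frac{\Gamma(\alpha_s+x)}{\Gamma(1-\gamma_s+x)}$, i.e.\ one factor of the product on the right-hand side, while the quotient $\frac{\Gamma(\beta+1)}{\Gamma(\beta'+1)}$ of the two $\beta$-Gamma functions appearing at each step telescopes (note that $\beta_n-\beta_p$ is invariant under the swap, since $\beta$ depends on the $\alpha_i,\gamma_i$ symmetrically). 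Because the recurrence holds for general parameters, not just $\gamma_n=0$, and because one is free to relabel which parameter is peeled off, iterating it along the chain of orders $n-p,\,n-p-1,\,\ldots$ accumulates exactly $\prod_{s=p+1}^n \frac{\Gamma(\alpha_s+x)}{\Gamma(1-\gamma_s+x)}$, and the telescoping collapses all intermediate $\beta$-factors, leaving the single factor $\Gamma(\beta^{(n-p)}+1)=\Gamma(\beta_n-\beta_p)$ from the top of the chain.

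It remains to anchor the induction at $n-p=1$. There $\bar\xi_1$ is elementary --- a power of $(1-t/z)$ times a monomial --- so $\int_0^z t^{x-1}\,\bar\xi_1(z/t)\,\diff{}{t}$ is a genuine Euler integral, and a direct evaluation via $\int_0^1 t^{a-1}(1-t)^{b-1}\,\diff{}{t}=\frac{\Gamma(a)\Gamma(b)}{\Gamma(a+b)}$ yields $z^x\,\Gamma(\beta_n-\beta_{n-1})\,\frac{\Gamma(\alpha_n+x)}{\Gamma(1-\gamma_n+x)}$. If one prefers to avoid the Mellin reinterpretation, the inductive step can be carried out exactly as in the proof of Lemma~\ref{lem:3}: substitute the symmetry-transformed N\o rlund recurrence of Lemma~\ref{lem:9} for $\bar\xi_{n-p}$, interchange the order of the two resulting integrations by Dirichlet's formula, and evaluate the inner integral as a Beta integral so as to peel off one factor $\frac{\Gamma(\alpha_{p+1}+x)}{\Gamma(1-\gamma_{p+1}+x)}$ and lower the order.

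I expect the main obstacle to be bookkeeping rather than conceptual: making the telescoping close \emph{exactly} on $\Gamma(\beta_n-\beta_p)$, which requires the cancellation of all the intermediate $\Gamma(\beta+1)$ factors to dovetail precisely with the Gamma factor supplied by the order-one base case, so that no spurious constant survives. One must also keep the swapped roles of the parameters straight through the iteration and verify that the convergence windows at $v=0$ and $v=1$ are simultaneously nonempty under the stated hypotheses, so that the interchange of integration order (equivalently, the term-by-term application of the Mellin recurrence) is legitimate. For complex $z$ one should finally check that the chosen branch of $(1-z)^{\beta}$ in $\xi$ is the one obtained after the substitutions $u=z/t$, $v=1/u$, but this is a routine consistency check once the real-interval computation is in place.
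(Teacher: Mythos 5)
Your proposal is correct and follows essentially the same route as the paper: both reduce the integral via the substitutions $t\mapsto z/t$, $t\mapsto 1/t$ and the $\alpha\leftrightarrow\gamma$ symmetry to the Mellin transform $\int_0^1 v^{x-1}\xi_{n-p}(v)\,\diff{}{v}$, evaluate it by iterating Lemma~\ref{lem:3} with the telescoping of the $\Gamma(\beta+1)$ factors, and finish with the identity $\beta_{n-p}=\beta_n-\beta_p-1$. The only differences are cosmetic (you apply the symmetry before rather than after the iteration, and you spell out the order-one base case that the paper leaves implicit).
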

\begin{proof}
  This is also due to~\cite{Norlund:1955ab}. Applying Lemma~\ref{lem:3} $n-1$ times yields
  %% Norlund (2.18)
  \[
  \int_0^1 t^{x-1} \xi_n(t) \diff{}{t} = \Gamma(\beta_n+1) \prod_{s=1}^n \frac{\Gamma(x+\gamma_s)}{\Gamma(x-\alpha_s+1)}
  \]
  for $\Re(\beta_n) > -1$ and $\Re(x+\gamma_s) > 0$, $s=1,\dots,n$. 
  Recall from Section~\ref{sec:hyperg-diff-equat} that interchanging
  $\alpha_j$ with $\gamma_j$, $j=1,\dots,n$ and replacing $t$ with
  $1/t$ turns $\xi_n(z)$ 
  into $\bar \xi_n(z)$. Applying these operations together with the
  replacement of $x$ by $-x$ to the above integral yields
\[
  \int_1^\infty t^{x-1} \bar\xi_n(t) \diff{}{t} = \Gamma(\beta_n+1)
    \prod_{s=1}^n \frac{\Gamma(\alpha_s-x)}{\Gamma(1-\gamma_s-x)}  
\]
for $\Re \beta_n > -1, \Re(\alpha_s-x) > 0, s=1, \dots,n$. Replacing
$t$ by $\frac{z}{t}$ and $x$ by $-x$ and exhibiting the dependence
of $\bar\xi_n(z)$ on the parameters $\alpha$ and $\gamma$ yields
\[
  \int_0^z t^{x-1} \bar\xi_n\left(\genfrac{}{}{0pt}{0}{\alpha_1,\dots,\alpha_n}{\gamma_1,\dots,\gamma_n};\frac{z}{t}\right) \diff{}{t} = z^x\Gamma(\beta_n+1)
    \prod_{s=1}^n \frac{\Gamma(\alpha_s+x)}{\Gamma(1-\gamma_s+x)}    
\]
Replacing $n$ by $n-p$ for $p<n$ and restricting to the last $n-p$
parameters $\alpha_{p+1},\dots,\alpha_n$,
$\gamma_{p+1},\dots,\gamma_n$, $\beta_{n-p}$ can be written as
$\beta_n-\beta_p-1$, and the claim follows.
\end{proof}

\begin{lem}
  \label{lem:2}
  For $1\leq p < n$, $v \in \mC \setminus \mZ_{\leq 0}$, and if $\Re
  \beta_n > \Re\beta_p$, $\Re(x+\alpha_s) > 0, s=p+1, \dots,n$, then
  \[
    \begin{aligned}
    &\int_0^z \diff{}{t}\, t^{\gamma_1-1}(1-t)^v
    \bar\xi_{n-p}\left(\genfrac{}{}{0pt}{0}{\alpha_{p+1},\dots,\alpha_n}{\gamma_{p+1},\dots,\gamma_n};\frac{z}{t}\right)
    \\
    &= \Gamma(\beta_n-\beta_p) z^{\gamma_1} \int
    \frac{\diff{}{u}}{2\pi i} \e{-i\pi u} z^u
    \frac{\Gamma(-v+u)\Gamma(-u)}{\Gamma(-v)} \prod_{s=p+1}^n
    \frac{\Gamma(\alpha_s+\gamma_1+u)}{\Gamma(1-\gamma_s+\gamma_1+u)}
  \end{aligned}
  \]
\end{lem}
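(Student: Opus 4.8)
The plan is to reduce Lemma~\ref{lem:2} to Lemma~\ref{lem:1} by trading the analytic factor $(1-t)^v$ for one additional Mellin--Barnes integration in an auxiliary variable $u$. First I would rewrite $(1-t)^v$ as a Mellin--Barnes integral. Renaming the variables in the binomial identity~\eqref{eq:binomialidentity} (taking the base to be $t$, the exponent to be $v$, and the integration variable to be $u$) gives
\[
  (1-t)^v = \int \frac{\diff{}{u}}{2\pi i}\, \e{-i\pi u}\, t^u\,
  \frac{\Gamma(-v+u)\Gamma(-u)}{\Gamma(-v)},
\]
valid for $v \in \mC \setminus \mZ_{\leq 0}$ along a contour separating the poles of $\Gamma(-u)$ (at $u \in \mZ_{\geq 0}$) from those of $\Gamma(-v+u)$ (at $u \in v - \mZ_{\geq 0}$). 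This is exactly the combination of Gamma factors, together with the $\e{-i\pi u}$, that appears on the right-hand side of the claim.

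Next I would insert this representation into the left-hand side of the lemma and interchange the $u$-integration with the Euler integration in $t$:
\[
\begin{aligned}
  &\int_0^z \diff{}{t}\, t^{\gamma_1-1}(1-t)^v\,
  \bar\xi_{n-p}\!\left(\genfrac{}{}{0pt}{0}{\alpha_{p+1},\dots,\alpha_n}{\gamma_{p+1},\dots,\gamma_n};\frac{z}{t}\right) \\
  &= \int \frac{\diff{}{u}}{2\pi i}\, \e{-i\pi u}\,
  \frac{\Gamma(-v+u)\Gamma(-u)}{\Gamma(-v)}
  \int_0^z \diff{}{t}\, t^{\gamma_1+u-1}\,
  \bar\xi_{n-p}\!\left(\genfrac{}{}{0pt}{0}{\alpha_{p+1},\dots,\alpha_n}{\gamma_{p+1},\dots,\gamma_n};\frac{z}{t}\right).
\end{aligned}
\]
The inner integral is precisely the left-hand side of Lemma~\ref{lem:1} with $x$ replaced by $\gamma_1+u$, which evaluates to $z^{\gamma_1+u}\,\Gamma(\beta_n-\beta_p)\prod_{s=p+1}^n \Gamma(\alpha_s+\gamma_1+u)/\Gamma(1-\gamma_s+\gamma_1+u)$. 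Pulling the factor $z^{\gamma_1}\Gamma(\beta_n-\beta_p)$ out of the $u$-integral while keeping $z^u$ inside then reproduces the stated right-hand side verbatim, so the remaining manipulations are purely formal once the interchange is licensed.

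The main obstacle is justifying the interchange of integrations and, at the same time, the applicability of Lemma~\ref{lem:1} along the entire $u$-contour. Lemma~\ref{lem:1} requires $\Re(x+\alpha_s)>0$ for $s=p+1,\dots,n$, which here becomes $\Re(\gamma_1+u+\alpha_s)>0$; this is the role played by the hypothesis $\Re(x+\alpha_s)>0$ (with $x$ standing for $\gamma_1+\Re u$ on the contour) and by $\Re\beta_n>\Re\beta_p$, which together guarantee absolute convergence of the resulting double integral and hence permit Fubini. Thus one must exhibit a vertical $u$-contour that simultaneously separates the two families of poles of $\Gamma(-u)$ and $\Gamma(-v+u)$ and lies far enough to the right that $\Re(\gamma_1+u+\alpha_s)>0$ holds along it; verifying that such a contour exists and that the integrand decays on the tails (via Stirling's asymptotics for the ratio of Gamma functions) is the only step demanding genuine care, the rest being a direct substitution of Lemma~\ref{lem:1}.
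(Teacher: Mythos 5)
Your proposal is correct and follows essentially the same route as the paper: apply the binomial identity~\eqref{eq:binomialidentity} to $(1-t)^v$, interchange the $u$- and $t$-integrations, and evaluate the inner Euler integral via Lemma~\ref{lem:1} with $x=\gamma_1+u$. The only cosmetic difference is that the paper disposes of the parameter restrictions by invoking analytic continuation rather than by explicitly positioning the $u$-contour, but the substance is identical.
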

\begin{proof}
  We apply the binomial identity~\eqref{eq:binomialidentity} to
  $(1-t)^{v}$ in the integrand on the left hand side 
  and obtain
\[
  \int \frac{\diff{}{u}}{2\pi i}\, 
    \e{-i\pi u} \frac{\Gamma(-u)\Gamma(u-v)}{\Gamma(-v)} \int_0^z \diff{}{t}\, t^{\gamma_1+u-1} 
    \bar\xi_{n-p}\left(\genfrac{}{}{0pt}{0}{\alpha_{p+1},\dots,\alpha_n}{\gamma_{p+1},\dots,\gamma_n};\frac{z}{t}\right)
\]
The claim follows from Lemma 4.3 and analytic continuation.
\end{proof}

In order to state the following two propositions, we choose $p$ of the
parameters $\alpha_1,\dots,\alpha_n$ and $p$ of the parameters
$\gamma_1,\dots,\gamma_n$, say for simplicity
$\alpha_1,\dots,\alpha_p$ and $\gamma_1,\dots,\gamma_p$. Our results
do not depend on this choice, however, the complexity of explicit
calculations can depend on it. Then we consider the
hypergeometric differential equation of order $p$ with exponents
$\alpha_1,\dots,\alpha_p$ and $\gamma_1,\dots,\gamma_p$. For its
solutions we introduce the notation 
\[
  \widetilde y_j^*(z) := \prod_{s=1}^p
    \frac{\Gamma(\alpha_s+\gamma_j)}{\Gamma(1-\gamma_s+\gamma_j)} z^{\gamma_j}
    {}_pF_{p-1}\left(\genfrac{}{}{0pt}{0}{\alpha_{1}+\gamma_j,\dots,\alpha_p+\gamma_j}{1-\gamma_{1}+\gamma_j,\widehat{\dots},1-\gamma_p+\gamma_j};z\right)
\]
for $1 \leq j \leq p$, as well as the special solution
\begin{equation}
  \widetilde G_p(z) := \widetilde
  G_p\left(\genfrac{}{}{0pt}{}{\alpha_1,\dots, \alpha_p}{\gamma_1,
      \dots, \gamma_{p}};z\right) :=
  G^{p,p}_{p,p}\left(\genfrac{}{}{0pt}{}{1-\alpha_1,\dots,
      1-\alpha_p}{\gamma_1, \dots,
      \gamma_{p}};(-1)^{p-2}z\right)
  \label{eq:Gtilde}
\end{equation}

\begin{prop}
  \label{prop:7}
  For $1 \leq p \leq n-1$, $1 \leq j \leq p$, and if $\Re\beta_n >
  \Re\beta_p$, $\Re(\gamma_j + \alpha_s) > 0$, $s=p+1,\dots,n$, we have
  \[
    y_j^*(z) % &= \frac{1}{\Gamma(\beta_n-\beta_p)}\prod_{s=1}^p
   %  \frac{\Gamma(\alpha_s+\gamma_j)}{\Gamma(1-\gamma_s+\gamma_j)}\\
   % &\phantom{=}
   %  \int_0^z \frac{\diff{}{t}}{t}\,t^{\gamma_j}
   %  {}_pF_{p-1}\left(\genfrac{}{}{0pt}{0}{\alpha_{1}+\gamma_j,\dots,\alpha_p+\gamma_j}{1-\gamma_{1}+\gamma_j,\widehat{\dots},1-\gamma_p+\gamma_j};t\right)
   %  \bar\xi_{n-p}\left(\genfrac{}{}{0pt}{0}{\alpha_{p+1},\dots,\alpha_n}{\gamma_{p+1},\dots,\gamma_n};\frac{z}{t}\right)\\
   = \frac{1}{\Gamma(\beta_n-\beta_p)}
    \int_0^z \frac{\diff{}{t}}{t}\, \widetilde y_j^*(t)
    \bar\xi_{n-p}\left(\genfrac{}{}{0pt}{0}{\alpha_{p+1},\dots,\alpha_n}{\gamma_{p+1},\dots,\gamma_n};\frac{z}{t}\right)\\
  \]
\end{prop}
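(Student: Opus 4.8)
The plan is to substitute the Mellin--Barnes representation of $\widetilde y_j^*(t)$ into the right-hand side, interchange the order of integration, and evaluate the resulting inner integral over $t$ by Lemma~\ref{lem:1}. This closely mirrors the proof of Lemma~\ref{lem:2}, with the explicit integral representation of $\widetilde y_j^*$ now playing the role that the binomial identity~\eqref{eq:binomialidentity} played there.

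First I would write the order-$p$ solution $\widetilde y_j^*(t)$ in its Mellin--Barnes form, the exact analogue of~\eqref{eq:nonresonantyj} for the order-$p$ equation with exponents $\alpha_1,\dots,\alpha_p$, $\gamma_1,\dots,\gamma_p$, namely
\[
  \widetilde y_j^*(t) = \int \frac{\diff{}{s}}{2\pi i}\,\e{-i\pi s}\,t^s \prod_{k=1}^p \frac{\Gamma(\alpha_k+s)}{\Gamma(1-\gamma_k+s)}\,\Gamma(\gamma_j-s)\Gamma(1-\gamma_j+s),
\]
the $s$-contour being chosen, as in the definition of $\widetilde y_j^*$, so as to separate the poles of $\Gamma(\alpha_k+s)$, $k=1,\dots,p$, from those of $\Gamma(\gamma_j-s)$. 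Inserting this into $\frac{1}{\Gamma(\beta_n-\beta_p)}\int_0^z \frac{\diff{}{t}}{t}\,\widetilde y_j^*(t)\,\bar\xi_{n-p}(z/t)$ and interchanging the $s$- and $t$-integrations leaves the inner integral $\int_0^z t^{s-1}\,\bar\xi_{n-p}\left(\genfrac{}{}{0pt}{0}{\alpha_{p+1},\dots,\alpha_n}{\gamma_{p+1},\dots,\gamma_n};\frac{z}{t}\right)\diff{}{t}$, which is precisely the left-hand side of Lemma~\ref{lem:1} with $x=s$.

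Applying Lemma~\ref{lem:1} replaces that inner integral by $z^s\,\Gamma(\beta_n-\beta_p)\prod_{\ell=p+1}^n \frac{\Gamma(\alpha_\ell+s)}{\Gamma(1-\gamma_\ell+s)}$. The prefactor $\Gamma(\beta_n-\beta_p)$ then cancels the $\frac{1}{\Gamma(\beta_n-\beta_p)}$ out front, and the two products over $k=1,\dots,p$ and $\ell=p+1,\dots,n$ merge into $f_n(s)=\prod_{k=1}^n \frac{\Gamma(\alpha_k+s)}{\Gamma(1-\gamma_k+s)}$. What remains is exactly
\[
  \int \frac{\diff{}{s}}{2\pi i}\,\e{-i\pi s}\,z^s f_n(s)\,\Gamma(\gamma_j-s)\Gamma(1-\gamma_j+s),
\]
which is the integral representation~\eqref{eq:nonresonantyj} of $y_j^*(z)$ (after renaming $s\mapsto t$), and the identity follows.

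The step I expect to require the most care is the interchange of the two integrations and, tied to it, the placement of the $s$-contour. One must exhibit a vertical line $\Re s = c$ that simultaneously (i) keeps all poles of $\Gamma(\alpha_k+s)$ to its left and those of $\Gamma(\gamma_j-s)$ to its right, so that it genuinely represents $\widetilde y_j^*$, and (ii) satisfies $\Re(s+\alpha_\ell)>0$ for $\ell=p+1,\dots,n$, which is the convergence hypothesis of Lemma~\ref{lem:1}. The assumptions $\Re(\gamma_j+\alpha_s)>0$ for $s=p+1,\dots,n$ are precisely what allow $c$ to be pushed close enough to $\Re\gamma_j$ for (ii) to hold while (i) persists, and $\Re\beta_n>\Re\beta_p$ secures the convergence of the $t$-integral; with the contour so placed, absolute convergence of the double integral justifies Fubini.
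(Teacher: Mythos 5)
Your proof is correct and follows essentially the same route as the paper's: substitute the Mellin--Barnes representation of $\widetilde y_j^*(t)$, interchange the $s$- and $t$-integrations using the hypothesis $\Re(\alpha_s+\gamma_j)>0$, evaluate the inner $t$-integral by Lemma~\ref{lem:1}, and recognize the result as the integral representation~\eqref{eq:nonresonantyj} of $y_j^*(z)$. Your added discussion of the contour placement and Fubini justification is more explicit than what the paper records, but it is the same argument.
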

\begin{proof}
  We use the Mellin--Barnes integral representation for $\widetilde
  y_j^*(t)$ to write
  \[
  \begin{aligned}
    &\int_0^z \frac{\diff{}{t}}{t}\, \widetilde y_j^*(t)
    \bar\xi_{n-p}\left(\genfrac{}{}{0pt}{0}{\alpha_{p+1},\dots,\alpha_n}{\gamma_{p+1},\dots,\gamma_n};\frac{z}{t}\right)\\
    &= \int_0^z \frac{\diff{}{t}}{t}\, \int \frac{\diff{}{u}}{2\pi i}\, \e{-i\pi u} t^u f_p(u)
    \Gamma(\gamma_j-u)\Gamma(1-\gamma_j+u)
    \bar\xi_{n-p}\left(\genfrac{}{}{0pt}{0}{\alpha_{p+1},\dots,\alpha_n}{\gamma_{p+1},\dots,\gamma_n};\frac{z}{t}\right)\\
  \end{aligned}
  \]
  Due to the conditions $\Re(\alpha_s+\gamma_j) > 0, s=p+1,\dots,n$,
  we can interchange the integrals and apply Lemma~\ref{lem:1}. Then
  this expression becomes 
  \[
  \begin{aligned}
    &\int \frac{\diff{}{u}}{2\pi i}\, \e{-i\pi u} f_p(u)
    \Gamma(\gamma_j-u)\Gamma(1-\gamma_j+u) \int_0^z
    \frac{\diff{}{t}}{t}\, t^u
    \bar\xi_{n-p}\left(\genfrac{}{}{0pt}{0}{\alpha_{p+1},\dots,\alpha_n}{\gamma_{p+1},\dots,\gamma_n};\frac{z}{t}\right)\\
    &= \int \frac{\diff{}{u}}{2\pi i}\, \e{-i\pi u} f_p(u)
    \Gamma(\gamma_j-u)\Gamma(1-\gamma_j+u) z^u\Gamma(\beta_n-\beta_p)
    \prod_{s=p+1}^n \frac{\Gamma(\alpha_s+u)}{\Gamma(1-\gamma_s+u)}  \\
    &= \Gamma(\beta_n-\beta_p) \int \frac{\diff{}{u}}{2\pi i}\,
    \e{-i\pi u} z^u f_n(u)
    \Gamma(\gamma_j-u)\Gamma(1-\gamma_j+u) \\
  \end{aligned}
  \]
  The integral in the last expression is $y_j^*(z)$.
\end{proof}
Proposition~\ref{prop:7} has the following analog for $G_p(z)$. 
\begin{prop}
 \label{prop:6} 
 For any $1 \leq p < n$, and if $\Re \beta_n > \Re\beta_p$,
 $\Re(\alpha_s+\gamma_j)>0$, $j=1\dots,p$, $s=p+1,\dots,n$ then  
  \[
    G_p \left(\genfrac{}{}{0pt}{0}{\alpha_{1},\dots,\alpha_n}{\gamma_{1},\dots,\gamma_n};z\right) = \frac{1}{\Gamma(\beta_n-\beta_p)}
    \int_0^z \frac{\diff{}{t}}{t}\, \widetilde G_p \left(\genfrac{}{}{0pt}{0}{\alpha_{1},\dots,\alpha_p}{\gamma_{1},\dots,\gamma_p};t\right)\bar\xi_{n-p}\left(\genfrac{}{}{0pt}{0}{\alpha_{p+1},\dots,\alpha_n}{\gamma_{p+1},\dots,\gamma_n};\frac{z}{t}\right)\\
  \]
\end{prop}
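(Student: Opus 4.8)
The plan is to follow verbatim the argument used for Proposition~\ref{prop:7}, replacing the individual logarithmic solution $\widetilde y_j^*$ by the Meijer $G$-function $\widetilde G_p$ and, correspondingly, $y_j^*$ by $G_p$. First I would insert into the right-hand side of the claimed identity the Mellin--Barnes representation of $\widetilde G_p(t)$ coming from~\eqref{eq:Gtilde},
\[
  \widetilde G_p(t) = \int \frac{\diff{}{u}}{2\pi i}\, \e{i\pi(p-2)u}\, t^u\, f_p(u) \prod_{h=1}^p \Gamma(\gamma_h-u)\,\Gamma(1-\gamma_h+u),
\]
with $f_p(u) = \prod_{s=1}^p \frac{\Gamma(\alpha_s+u)}{\Gamma(1-\gamma_s+u)}$, exactly as the representation for $\widetilde y_j^*$ was used in the proof of Proposition~\ref{prop:7}.

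The second step is to interchange the $u$-integration with the $t$-integration over $[0,z]$. This interchange is legitimate precisely under the hypotheses $\Re(\alpha_s+\gamma_j)>0$ for $j=1,\dots,p$, $s=p+1,\dots,n$, which is the same convergence input that justified the corresponding exchange in Proposition~\ref{prop:7}. After interchanging, the inner integral reads $\int_0^z \frac{\diff{}{t}}{t}\, t^u\, \bar\xi_{n-p}\!\left(\genfrac{}{}{0pt}{0}{\alpha_{p+1},\dots,\alpha_n}{\gamma_{p+1},\dots,\gamma_n};\tfrac{z}{t}\right)$, which is exactly the integral evaluated in Lemma~\ref{lem:1} with $x=u$; it yields the factor $z^u\,\Gamma(\beta_n-\beta_p)\prod_{s=p+1}^n \frac{\Gamma(\alpha_s+u)}{\Gamma(1-\gamma_s+u)}$.

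The final step is pure bookkeeping of Gamma factors. After pulling $\Gamma(\beta_n-\beta_p)$ out in front, the factor $f_p(u)$ carried by $\widetilde G_p$ combines with the complementary product $\prod_{s=p+1}^n \frac{\Gamma(\alpha_s+u)}{\Gamma(1-\gamma_s+u)}$ supplied by Lemma~\ref{lem:1} to reconstruct the full $f_n(u)$. Crucially, the resonance product $\prod_{h=1}^p \Gamma(\gamma_h-u)\,\Gamma(1-\gamma_h+u)$ is untouched by Lemma~\ref{lem:1} and simply carries over unchanged. The remaining integrand is therefore $\e{i\pi(p-2)u}\, z^u\, f_n(u)\prod_{h=1}^p \Gamma(\gamma_h-u)\,\Gamma(1-\gamma_h+u)$, which is precisely the Mellin--Barnes representation of $G_p(z)$ from~\eqref{eq:Gp}. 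Dividing by $\Gamma(\beta_n-\beta_p)$ gives the stated identity.

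I expect the only genuinely delicate point to be the justification of the Fubini-type interchange of the $u$- and $t$-integrations, as it was in Proposition~\ref{prop:7}; everything else is a direct substitution followed by the observation that merging $f_p$ with the output of Lemma~\ref{lem:1} rebuilds $f_n$ while the resonance factor persists verbatim.
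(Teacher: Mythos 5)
Your proof is correct, but it takes a different route from the paper. The paper deduces Proposition~\ref{prop:6} from Proposition~\ref{prop:7} by pure linearity: it writes $G_p(z)=\sum_{j=1}^p A_{pj}\,y_j^*(z)$ and $\widetilde G_p(z)=\sum_{j=1}^p A_{pj}\,\widetilde y_j^*(z)$ with the same coefficients $A_{pj}=\prod_{k\neq j}\frac{\pi\e{i\pi(p-1)\gamma_j}}{\sin\pi(\gamma_j-\gamma_k)}$ coming from Lemma~\ref{lem:4}(1), applies Proposition~\ref{prop:7} termwise, and recombines. You instead bypass Proposition~\ref{prop:7} entirely and rerun its proof with $\widetilde G_p$ in place of $\widetilde y_j^*$: insert the Mellin--Barnes representation of $\widetilde G_p(t)$, interchange the $u$- and $t$-integrations, evaluate the inner integral by Lemma~\ref{lem:1}, and observe that $f_p(u)\prod_{s=p+1}^n\frac{\Gamma(\alpha_s+u)}{\Gamma(1-\gamma_s+u)}=f_n(u)$ while the factor $\prod_{h=1}^p\Gamma(\gamma_h-u)\Gamma(1-\gamma_h+u)$ rides along, reproducing exactly the integrand of~\eqref{eq:Gp}. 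Both arguments rest on the same analytic core (Lemma~\ref{lem:1} plus the Fubini interchange under $\Re(\alpha_s+\gamma_j)>0$), but your direct version has a genuine advantage: in the resonant case --- the case the paper actually cares about --- the individual coefficients $A_{pj}$ are singular and the decomposition only makes sense as a $0/0$ limit, which the paper must patch with Remark~\ref{rem:23}; your argument never decomposes $G_p$ into the $y_j^*$ basis and so avoids that delicacy altogether. The paper's version is shorter on the page only because it outsources the work to Proposition~\ref{prop:7}.
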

\begin{proof}
  By Lemma~\ref{lem:4}(1) and Remark~\ref{rem:23} we have $G_p(z) =
  \sum_{j=1}^p A_{pj} y_j^*(z)$ with $A_{pj} = \prod_{\substack{k=1\\k\not=j}}^p
    \frac{\pi\e{i\pi(p-1)\gamma_j}}{\sin\pi(\gamma_j-\gamma_k)}$. 
  We apply Proposition~\ref{prop:7} and interchange the finite sum and the integral to obtain
  \[
  \begin{aligned}
    G_p(z) 
    % &= \sum_{j=1}^p A_{pj} \frac{1}{\Gamma(\beta_n-\beta_p)}
    % \int_0^z \frac{\diff{}{t}}{t}\, \widetilde y_j^*(t)
    % \bar\xi_{n-p}\left(\genfrac{}{}{0pt}{0}{\alpha_{p+1},\dots,\alpha_n}{\gamma_{p+1},\dots,\gamma_n};\frac{z}{t}\right)\\ 
    &= \frac{1}{\Gamma(\beta_n-\beta_p)}
    \int_0^z \frac{\diff{}{t}}{t}\, \sum_{j=1}^p A_{pj} \widetilde y_j^*(t)
    \bar\xi_{n-p}\left(\genfrac{}{}{0pt}{0}{\alpha_{p+1},\dots,\alpha_n}{\gamma_{p+1},\dots,\gamma_n};\frac{z}{t}\right)\\ 
    % &= \frac{1}{\Gamma(\beta_n-\beta_p)}
    % \int_0^z \frac{\diff{}{t}}{t}\, \widetilde
    % G_p\left(\genfrac{}{}{0pt}{}{\alpha_1,\dots, \alpha_p}{\gamma_1,
    %     \dots, \gamma_{p}};t\right) 
    % \bar\xi_{n-p}\left(\genfrac{}{}{0pt}{0}{\alpha_{p+1},\dots,\alpha_n}{\gamma_{p+1},\dots,\gamma_n};\frac{z}{t}\right)\\ 
  \end{aligned}
  \]
  From Lemma~\ref{lem:4}(1) for $n=p$ and Remark~\ref{rem:23} we also
  have $\widetilde G_p(z) = \sum_{j=1}^p A_{pj} \widetilde
  y_j^*(z)$. The claim follows. 
\end{proof}
For $p=2$ and $\gamma_1 - \gamma_2 \not \in \mZ$ these two
propositions are due to N\o rlund~\cite{Norlund:1955ab}.

\subsection{B\"uhring's recurrence for ${}_nF_{n-1}$}
\label{sec:buhr-recurr-_nf_n}

In this Section we review B\"uhring's recurrence for ${}_nF_{n-1}(z)$
from the point of view of integral
representations~\cite{Buehring:1992ab}. It is then applicable to each
of the $y_j^*(z)$ in the nonresonant case~\eqref{eq:nonresonantyj}, as
well as to $G_1(z)$ in the resonant case. Since this requires the 
parameters of ${}_nF_{n-1}(z)$ to take various sets of different values, we use
$a_1,\dots,a_n$ and $b_1,\dots,b_{n-1}$, as well as
\begin{equation}
  \label{eq:c}
  c = \sum_{j=1}^{n-1} b_j - \sum_{j=1}^n a_j.
\end{equation}
instead of $\alpha_1,\dots,\alpha_n$, 
$1-\gamma_1,\dots,1-\gamma_{n-1}$, and $\beta_n$, respectively.

\begin{lem}
  \label{lem:14}
\[
  \begin{aligned}  
& \,{}_n F_{n-1}\left(\genfrac{}{}{0pt}{}{a_1,\dots, a_n}{b_1, \dots, b_{n-1}};z\right) \\
& = \frac{\Gamma(b_{n-2})\Gamma(b_{n-1})}{\Gamma(a_n)\Gamma(b_{n-1}-a_{n})\Gamma(b_{n-2}-a_{n})}\\
& \phantom{=} \cdot \int \frac{\diff{}{t}}{2\pi i} \,\e{\pm\pi i t}\frac{\Gamma(-t)\Gamma(b_{n-1}-a_{n}+t)\Gamma(b_{n-2}-a_{n}+t)}{\Gamma(b_{n-1}+b_{n-2} - a_n + t)}
 \\
& \phantom{=} \cdot \,{}_{n-1} F_{n-2}\left(\genfrac{}{}{0pt}{}{a_1,\dots, a_{n-1}}{b_1, \dots, b_{n-3}, b_{n-1}+b_{n-2} - a_n + t};z\right)
\end{aligned}
\]
\end{lem}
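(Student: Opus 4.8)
The identity in Lemma 4.8 expresses an ${}_nF_{n-1}$ as a Mellin--Barnes integral whose integrand contains an ${}_{n-1}F_{n-2}$ with one of its lower parameters shifted by the integration variable $t$. My plan is to read this as an instance of the elementary integral identity in Lemma~\ref{lem:5}(2), applied not to the whole function but to a single summand of the power--series expansion of ${}_{n-1}F_{n-2}$ on the right-hand side. The cleanest route is to work backwards: start from the right-hand side, expand the inner ${}_{n-1}F_{n-2}$ as its defining power series in $z$, interchange the (absolutely convergent) sum with the $t$-integral, and evaluate the resulting $t$-integral in closed form term by term using Lemma~\ref{lem:5}(2).

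**The key computation.**

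First I would write the inner ${}_{n-1}F_{n-2}$ as $\sum_{k=0}^\infty \frac{(a_1)_k\cdots(a_{n-1})_k}{(b_1)_k\cdots(b_{n-3})_k}\frac{z^k}{k!\,(b_{n-1}+b_{n-2}-a_n+t)_k}$, using the Pochhammer symbols. The point is that the $t$-dependence of the $k$-th term sits entirely in the factor $1/(b_{n-1}+b_{n-2}-a_n+t)_k$, which I rewrite as $\Gamma(b_{n-1}+b_{n-2}-a_n+t)/\Gamma(b_{n-1}+b_{n-2}-a_n+t+k)$. Multiplying by the Gamma-function prefactor of the integrand, the $k$-th $t$-integral becomes
\[
  \int \frac{\diff{}{t}}{2\pi i}\,\e{\pm\pi i t}\,
  \frac{\Gamma(-t)\,\Gamma(b_{n-1}-a_n+t)\,\Gamma(b_{n-2}-a_n+t)}{\Gamma(b_{n-1}+b_{n-2}-a_n+k+t)}.
\]
This is exactly of the form treated in Lemma~\ref{lem:5}(2), with $\lambda_1=b_{n-1}-a_n$, $\lambda_2=b_{n-2}-a_n$, $\mu=0$, and $\nu=b_{n-1}+b_{n-2}-a_n+k$; note $\nu-\lambda_1-\lambda_2-\mu=a_n+k$, so the convergence condition $\Re(a_n+k)>0$ holds once $\Re a_n>0$. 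Evaluating gives $\Gamma(b_{n-1}-a_n)\Gamma(b_{n-2}-a_n)\Gamma(a_n+k)/\big(\Gamma(b_{n-1}+k)\Gamma(b_{n-2}+k)\big)$. After multiplying by the prefactor and rewriting $\Gamma(a_n+k)=\Gamma(a_n)(a_n)_k$ and similarly for the shifted lower parameters, the $k$-th term collapses precisely to the $k$-th term of the left-hand ${}_nF_{n-1}$, with the factor $(a_n)_k/\big((b_{n-1})_k(b_{n-2})_k\big)$ restored. Summing over $k$ reproduces ${}_nF_{n-1}$, proving the identity.

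**The main obstacle and how I would handle it.**

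The analytic subtlety — and the step I expect to cost the most care — is the interchange of summation and integration, together with tracking the contour so that the hypotheses of Lemma~\ref{lem:5}(2) hold uniformly in $k$. The contour must separate the poles of $\Gamma(-t)$ from those of $\Gamma(b_{n-1}-a_n+t)\Gamma(b_{n-2}-a_n+t)$, and the shift $\nu\mapsto\nu+k$ moves poles of the denominator in a controlled way but never across the contour, so a single admissible contour works for every $k$; the exponential factor $\e{\pm\pi i t}$ secures decay in the relevant half-plane. To justify the interchange rigorously I would restrict to a region where everything converges absolutely (e.g. $\Re a_n>0$ and $|z|$ small, with the remaining parameters generic so no Gamma factor hits a pole), establish the identity there, and then invoke analytic continuation in the parameters and in $z$ to obtain the general statement; the $\e{+\pi i t}$ versus $\e{-\pi i t}$ choice corresponds to the two directions of indentation and gives the same value by the reflection symmetry already exploited in the $n=2$ case.
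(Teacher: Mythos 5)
Your proof is correct, and it hinges on exactly the same key identity as the paper's own argument --- Lemma~\ref{lem:5}(2) with $\lambda_1=b_{n-1}-a_n$, $\lambda_2=b_{n-2}-a_n$, $\mu=0$ --- but you apply it in the opposite direction and against a different expansion. The paper starts from the left-hand side written as a Mellin--Barnes integral over $s$, uses Lemma~\ref{lem:5}(2) to \emph{open up} the ratio $\Gamma(a_n+s)/\bigl(\Gamma(b_{n-1}+s)\Gamma(b_{n-2}+s)\bigr)$ into a $t$-integral with $\nu=b_{n-1}+b_{n-2}-a_n+s$, interchanges the two contour integrals, and then recognizes the inner $s$-integral as the Mellin--Barnes representation of ${}_{n-1}F_{n-2}$ with shifted lower parameter. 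You instead start from the right-hand side, expand ${}_{n-1}F_{n-2}$ as a power series in $z$, and use the same lemma with $\nu=b_{n-1}+b_{n-2}-a_n+k$ to \emph{close} each $t$-integral into the Gamma ratio that restores the factor $(a_n)_k/\bigl((b_{n-1})_k(b_{n-2})_k\bigr)$; your sum over $k$ is precisely what one obtains by evaluating the paper's inner $s$-integral at the residues of $\Gamma(-s)$ at $s=k$, so the two proofs are residue-dual to one another. Both require the same kind of justification for an interchange (two contour integrals in the paper, a sum and a contour integral in your version) on a region such as $\Re a_n>0$, followed by analytic continuation in the parameters, and you flag this correctly. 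What your route buys is transparency: the per-term condition $\Re(a_n+k)>0$ and the collapse onto the $k$-th Taylor coefficient of ${}_nF_{n-1}$ are completely explicit. What it gives up is consistency with the paper's stated design, which deliberately keeps all of Sections 2--5 at the level of integral representations (so that the recurrence can be iterated in Proposition~\ref{prop:13} and combined with Proposition~\ref{prop:8} without ever passing to a series); as proved, your identity is a statement about power series valid a priori only inside the disc of convergence and must be continued in $z$ afterwards, whereas the paper's version holds directly wherever the integral representations converge.
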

\begin{proof}
  Consider the Mellin Barnes integral representation of
  ${}_{n}F_{n-1}(z)$ given in~\eqref{eq:nonresonantyj}
  \begin{equation*}
  \frac{\Gamma(a_1)\dots\Gamma(a_n)}{\Gamma(b_1)\dots\Gamma(b_{n-1})}
\,{}_n F_{n-1}\left(\genfrac{}{}{0pt}{}{a_1,\dots, a_n}{b_1, \dots,
    b_{n-1}};z\right) = \int \frac{\diff{}{s}}{2\pi i} \,z^s \e{-i\pi
  s} \frac{\Gamma(a_1+s)\dotsm\Gamma(a_n+s)\Gamma(-s)}{\Gamma(b_1+s)\dotsm\Gamma(b_{n-1}+s)}
  \label{eq:MellinBarnes}
  \end{equation*}
  We apply Lemma~\ref{lem:5}(2) with $\lambda_1=b_{n-1}-a_n,\lambda_2 = b_{n-2} -
  a_n, \mu=0, \nu=b_{n-1}+b_{n-2}-a_n+t, s=t$ to obtain
\begin{equation*}
\begin{aligned}
& \frac{\Gamma(a_1)\dotsm\Gamma(a_n)}{\Gamma(b_1)\dotsm\Gamma(b_{n-1})} \,{}_n F_{n-1}\left(\genfrac{}{}{0pt}{}{a_1,\dots, a_n}{b_1, \dots, b_{n-1}};z\right) \\
& = \frac{1}{\Gamma(b_{n-1}-a_{n})\Gamma(b_{n-2}-a_{n})} \int
\frac{\diff{}{s}}{2\pi i}\, z^s \e{-i\pi s} \frac{\Gamma(a_1+s)\dotsm\Gamma(a_{n-1}+s)\Gamma(-s)}{\Gamma(b_1+s)\dotsm\Gamma(b_{n-3}+s)}\\
&\phantom{=} \cdot\int \frac{\diff{}{t}}{2\pi i}\,\e{\pm\pi i t} \frac{\Gamma(b_{n-1}-a_{n}+t)\Gamma(b_{n-2}-a_{n}+t) \Gamma(-t)}{\Gamma(b_{n-1}+b_{n-2} - a_n + s + t)}
\end{aligned}
\end{equation*}
which is valid for $\Re (a_n+t) > 0$. This justifies interchanging
the order of integration as in the proof of Proposition~\ref{prop:8}
and yields, after extension to the whole $t$ plane by analytic continuation of the
parameters, the desired result.
\[
  \begin{aligned}
& \frac{\Gamma(a_1)\dotsm\Gamma(a_n)}{\Gamma(b_1)\dotsm\Gamma(b_{n-1})} \,{}_n F_{n-1}\left(\genfrac{}{}{0pt}{}{a_1,\dots, a_n}{b_1, \dots, b_{n-1}};z\right) \\
& = \frac{1}{\Gamma(b_{n-1}-a_{n})\Gamma(b_{n-2}-a_{n})} \int
\frac{\diff{}{t}}{2\pi i}\, \e{\pm\pi i t}\Gamma(b_{n-1}-a_{n}+t)\Gamma(b_{n-2}-a_{n}+t) \Gamma(-t)\\
&\phantom{=} \int \frac{\diff{}{s}}{2\pi i}\,z^s \e{-i\pi s}\frac{\Gamma(a_1+s)\dotsm\Gamma(a_{n-1}+s)\Gamma(-s)}{\Gamma(b_1+s)\dotsm\Gamma(b_{n-3}+s)\Gamma(b_{n-1}+b_{n-2} - a_n + s + t)}\\
& = \frac{\Gamma(a_1)\dotsm\Gamma(a_{n-1})}{\Gamma(b_1)\dotsm\Gamma(b_{n-3})\Gamma(b_{n-1}-a_{n})\Gamma(b_{n-2}-a_{n})}\\
& \phantom{=} \cdot \int \frac{\diff{}{t}}{2\pi i} \,\e{\pm\pi i t} \frac{\Gamma(-t)\Gamma(b_{n-1}-a_{n}+t)\Gamma(b_{n-2}-a_{n}+t)}{\Gamma(b_{n-1}+b_{n-2} - a_n + t)}\\
& \phantom{=} \cdot 
 \,{}_{n-1} F_{n-2}\left(\genfrac{}{}{0pt}{}{a_1,\dots, a_{n-1}}{b_1,
     \dots, b_{n-3}, b_{n-1}+b_{n-2} - a_n + t};z\right) 
  \end{aligned}\qedhere
\]
\end{proof}

Using this lemma, we find the following integral representation of
${}_nF_{n-1}(z)$ in terms of ${}_2F_1(z)$: 
\begin{prop}
  \label{prop:13}
  \[
  \begin{aligned}
     & \,{}_n F_{n-1}\left(\genfrac{}{}{0pt}{}{a_1,\dots, a_n}{b_1, \dots, b_{n-1}};z\right) \\
    & =
    \frac{\Gamma(b_{n-1}) \cdots \Gamma(b_{1})}{\Gamma(a_n)
      \cdots \Gamma(a_{1})} \int \frac{\diff{}{u}}{2\pi i} \widetilde{A}^{(n)}(u) \frac{\Gamma(a_1)\Gamma(a_2)}{\Gamma(c+a_1+a_2+u)} \,{}_{2}
    F_{1}\left(\genfrac{}{}{0pt}{}{a_1, a_{2}}{c+a_1+a_2+u};z\right)
  \end{aligned}
  \]
  with
  \begin{align*}
    &\widetilde{A}^{(n)}(u)\\ &=
    \frac{1}{\Gamma(b_{n-1}-a_{n})\Gamma(b_{n-2}-a_{n})
      \Gamma(b_{n-3}-a_{n-1})\cdots \Gamma(b_1-a_3)}\\
    &\phantom{=} \cdot\int \frac{\diff{}{u_{n-2}}}{2\pi i} \e{\pm\pi i
      u_{n-2}}\frac{\Gamma(-u_{n-2})\Gamma(b_{n-1}-a_{n}+u_{n-2})\Gamma(b_{n-2}-a_{n}+u_{n-2})}{\Gamma(b_{n-1}+b_{n-2}
      - a_n -a_{n-1} + u_{n-2})} \\
    & \phantom{=} \cdot \int \frac{\diff{}{u_{n-3}}}{2\pi i} \e{\pm\pi
      i
      (u_{n-3}-u_{n-2})}\frac{\Gamma(u_{n-2}-u_{n-3})\Gamma(b_{n-1}+b_{n-2}
      - a_n -a_{n-1}+
      u_{n-3})}{\Gamma(b_{n-1}+b_{n-2}
      +b_{n-3}- a_n - a_{n-1} - a_{n-2} + u_{n-3})} \\
    & \phantom{=\int \frac{\diff{}{u_{n-3}}}{2\pi i}}
    \cdot \Gamma(b_{n-3}-a_{n-1}+u_{n-3}-u_{n-2})\\
    & \phantom{=} \cdot \quad \cdots\\
    & \phantom{=} \cdot \int \frac{\diff{}{u_{2}}}{2\pi i} \e{\pm\pi i
      (u_{2}-u_{3})}\frac{\Gamma(u_{3}-u_{2})\Gamma(b_{n-1}+\dots +
      b_{3} - a_n -\dots - a_{4}+
      u_{2})}{\Gamma(b_{n-1}+\dots
      +b_{2}- a_n -\dots - a_{3} + u_{2})} \\
    & \phantom{=\int \frac{\diff{}{u_{2}}}{2\pi i}}
    \cdot \Gamma(b_{2}-a_{4}+u_{2}-u_{3})\\
    & \phantom{=} \cdot \e{\pm\pi i
      (u-u_2)}\Gamma(u_2-u)\Gamma(c+a_1+a_2-b_1+u)\Gamma(b_{1}-a_{3}+u-u_2)
  \end{align*}
\end{prop}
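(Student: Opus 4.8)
The plan is to prove the formula by iterating Lemma~\ref{lem:14} exactly $n-2$ times, peeling off one top parameter and reducing the order by one at each step until only a $\,{}_2F_1$ remains. Concretely I would argue by downward induction on the order. Applying Lemma~\ref{lem:14} once replaces $\,{}_nF_{n-1}$ by a single Mellin--Barnes integral in a new variable $u_{n-2}$ whose integrand contains $\,{}_{n-1}F_{n-2}$ with the two bottom parameters $b_{n-1},b_{n-2}$ merged into $\nu_1 := b_{n-1}+b_{n-2}-a_n+u_{n-2}$, multiplied by the kernel $\frac{\Gamma(-u_{n-2})\Gamma(b_{n-1}-a_n+u_{n-2})\Gamma(b_{n-2}-a_n+u_{n-2})}{\Gamma(\nu_1)}$ and the prefactor $\frac{\Gamma(b_{n-1})\Gamma(b_{n-2})}{\Gamma(a_n)\Gamma(b_{n-1}-a_n)\Gamma(b_{n-2}-a_n)}$. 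I would then feed this $\,{}_{n-1}F_{n-2}$ back into Lemma~\ref{lem:14}, now peeling $a_{n-1}$ and merging its last bottom parameter $\nu_1$ with $b_{n-3}$ to form $\nu_2 := \nu_1+b_{n-3}-a_{n-1}+u_{n-3}$, and continue, introducing the variables $u_{n-2},u_{n-3},\dots,u_2,u$ in turn.

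The heart of the argument is two bookkeeping observations carried out at each iteration. First, the denominator $\Gamma(\nu_k)$ produced at step $k$ is exactly cancelled by the factor $\Gamma(\nu_k)$ in the \emph{numerator} of the prefactor of step $k+1$, since $\nu_k$ is precisely the last bottom parameter fed into the next application; of the remaining prefactor, the constant factors $\frac{\Gamma(b_{n-1-k})}{\Gamma(a_{n-k+1})\Gamma(b_{n-1-k}-a_{n-k+1})}$ are collected into the outer constant and the front factor of $\widetilde A^{(n)}(u)$, while the variable-dependent factor $\frac{1}{\Gamma(\nu_{k}-a_{\mathrm{next}})}$ becomes the denominator appearing in the $k$-th integration block of $\widetilde A^{(n)}(u)$ --- this explains why each block's denominator looks ``one step ahead'' (the $-a_{n-1}$ in the $u_{n-2}$-block, the $-a_{n-2}$ in the $u_{n-3}$-block, and so on). Second, after each application beyond the first I would perform the change of variables $u_k \mapsto u_k - u_{k+1}$, which turns $\Gamma(-u_k)$ into $\Gamma(u_{k+1}-u_k)$, converts $\e{\pm\pi i u_k}$ into $\e{\pm\pi i(u_k-u_{k+1})}$, and strips the accumulated dependence on $u_{k+1}$ out of the merged arguments, bringing the integrand into precisely the displayed shape.

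After the $(n-2)$-th application one reaches a $\,{}_2F_1$ whose last bottom parameter is the fully merged $\nu_{n-2} = \sum_{j=1}^{n-1} b_j - \sum_{j=3}^n a_j + u$; using the definition of $c$ in~\eqref{eq:c} this equals $c+a_1+a_2+u$, matching the stated argument. Collecting the surviving $\Gamma(b_j)/\Gamma(a_j)$ factors from all prefactors yields $\frac{\Gamma(b_{n-1})\cdots\Gamma(b_1)}{\Gamma(a_n)\cdots\Gamma(a_3)}$; writing the final $\,{}_2F_1$ in its normalized Mellin--Barnes form (cf.\ the representation used in the proof of Lemma~\ref{lem:14}) introduces the factor $\frac{\Gamma(a_1)\Gamma(a_2)}{\Gamma(c+a_1+a_2+u)}$, so inserting a compensating $\frac{1}{\Gamma(a_1)\Gamma(a_2)}$ recovers the outer constant $\frac{\Gamma(b_{n-1})\cdots\Gamma(b_1)}{\Gamma(a_n)\cdots\Gamma(a_1)}$ of the statement together with the normalized $\,{}_2F_1$ exactly as written.

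I expect the main obstacle to be the bookkeeping rather than any single hard estimate: one must track carefully which $\Gamma$-factors cancel between consecutive prefactors and denominators, and check that the iterated shifts $u_k\mapsto u_k-u_{k+1}$ reproduce the exact arguments and exponential signs in $\widetilde A^{(n)}(u)$. A secondary point requiring care is convergence, exactly as in the proofs of Lemma~\ref{lem:14} and Proposition~\ref{prop:8}: each interchange of integration order is only directly valid in a restricted parameter range (the analogue of the condition $\Re(a_n+t)>0$ in Lemma~\ref{lem:14}), so at every step I would first establish the identity where the contours can be cleanly separated and then extend it to all parameter values by analytic continuation, as indicated at the end of the proof of Lemma~\ref{lem:14}.
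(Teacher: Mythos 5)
Your proposal is correct and follows essentially the same route as the paper: the paper's proof also iterates Lemma~\ref{lem:14} exactly $n-2$ times and then passes from the raw integration variables $t_{n-2},\dots,t_1$ to the telescoped variables $u_k=\sum_{j=k}^{n-2}t_j$ (equivalently, your incremental shifts $t_k=u_k-u_{k+1}$), with the same cancellation of each step's $\Gamma(\nu_k)$ denominator against the next prefactor and the same identification of the final merged parameter as $c+a_1+a_2+u$ via~\eqref{eq:c}. Your added care about justifying each interchange of integrals in a restricted parameter range and extending by analytic continuation is consistent with how the paper handles this inside Lemma~\ref{lem:14}.
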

For the applications in Section~\ref{sec:applications} we give the first few cases explicitly:
  \begin{align}
    \label{eq:A3t}
    \widetilde{A}^{(3)}(u) &= \e{\pm\pi i
     u}\frac{\Gamma(c+a_1+a_2-b_1+u)\Gamma(b_{1}-a_{3}+u)
    \Gamma(-u)}{\Gamma(b_{2}-a_{3})\Gamma(b_{1}-a_{3})}\\ 
   \label{eq:A4t}
    \widetilde{A}^{(4)}(u) &=
    \frac{1}{\Gamma(b_{3}-a_{4})\Gamma(b_{2}-a_{4})
      \Gamma(b_1-a_3)}\\
    &\phantom{=} \cdot \int \frac{\diff{}{u_{2}}}{2\pi i} \e{\pm\pi i
      u_{2}}\frac{\Gamma(-u_{2})\Gamma(b_{3} - a_4 +
      u_{2})\Gamma(b_{2}-a_{4}+u_{2})}{\Gamma(b_{3}+b_{2}
      - a_4 - a_{3} + u_{2})} \notag\\
    & \phantom{=} \cdot \e{\pm\pi i
      (u-u_2)}\Gamma(u_2-u)\Gamma(c+a_1+a_2-b_1+u)\Gamma(b_{1}-a_{3}+u-u_2) \notag
  \end{align}
  \begin{proof}
    Repeated application of Lemma~\ref{lem:14} yields
  \begin{align*}
    & \,{}_n F_{n-1}\left(\genfrac{}{}{0pt}{}{a_1,\dots, a_n}{b_1, \dots, b_{n-1}};z\right) \\
    & =
    \frac{\Gamma(b_{n-1}) \Gamma(b_{n-2}) \Gamma(b_{n-3})}{\Gamma(a_n) \Gamma(a_{n-1})\Gamma(b_{n-1}-a_{n})\Gamma(b_{n-2}-a_{n}) \Gamma(b_{n-3}-a_{n-1})}\\
    &\phantom{=} \cdot\int \frac{\diff{}{t_{n-2}}}{2\pi i} 
    \e{\pm\pi i
      t_{n-2}}\frac{\Gamma(-t_{n-2})\Gamma(b_{n-1}-a_{n}+t_{n-2})\Gamma(b_{n-2}-a_{n}+t_{n-2})}{\Gamma(b_{n-1}+b_{n-2}
      - a_n -a_{n-1} + t_{n-2})} \\
    & \phantom{=} \cdot \int \frac{\diff{}{t_{n-3}}}{2\pi i} 
    \e{\pm\pi i
      t_{n-3}}\frac{\Gamma(-t_{n-3})\Gamma(b_{n-1}+b_{n-2} - a_n + t_{n-2}-a_{n-1}+t_{n-3})}{\Gamma(b_{n-1}+b_{n-2} - a_n + t_{n-2}+b_{n-3}
      - a_{n-1} + t_{n-3})} \\
    & \phantom{=\int \frac{\diff{}{t_{n-3}}}{2\pi i} } \cdot \Gamma(b_{n-3}-a_{n-1}+t_{n-3})\\
    & \phantom{=} \cdot \,{}_{n-2}
    F_{n-3}\left(\genfrac{}{}{0pt}{}{a_1,\dots, a_{n-2}}{b_1, \dots,
        b_{n-4}, b_{n-1}+b_{n-2}+b_{n-3} - a_n -a_{n-1}+
        t_{n-2}+t_{n-3}};z\right)\\
    &=
    \frac{\Gamma(b_{n-1}) \cdots \Gamma(b_{1})}{\Gamma(a_n)
      \cdots \Gamma(a_{3})\Gamma(b_{n-1}-a_{n})\Gamma(b_{n-2}-a_{n})
      \Gamma(b_{n-3}-a_{n-1})\cdots \Gamma(b_1-a_3)}\\
    &\phantom{=} \cdot\int \frac{\diff{}{t_{n-2}}}{2\pi i} 
    \e{\pm\pi i
      t_{n-2}}\frac{\Gamma(-t_{n-2})\Gamma(b_{n-1}-a_{n}+t_{n-2})\Gamma(b_{n-2}-a_{n}+t_{n-2})}{\Gamma(b_{n-1}+b_{n-2}
      - a_n -a_{n-1} + t_{n-2})} \\
    & \phantom{=} \cdot \int \frac{\diff{}{t_{n-3}}}{2\pi i} 
    \e{\pm\pi i
      t_{n-3}}\frac{\Gamma(-t_{n-3})\Gamma(b_{n-1}+b_{n-2} - a_n
      -a_{n-1}+
      t_{n-2}+t_{n-3})}{\Gamma(b_{n-1}+b_{n-2}
      +b_{n-3}- a_n - a_{n-1} + t_{n-2}
      + t_{n-3})} \\
    & \phantom{=\int \frac{\diff{}{t_{n-3}}}{2\pi i} } \cdot \Gamma(b_{n-3}-a_{n-1}+t_{n-3})\\
    & \phantom{=} \cdot \quad \cdots\\
    & \phantom{=} \cdot \int \frac{\diff{}{t_{1}}}{2\pi i} 
    \e{\pm\pi i
      t_{1}}\frac{\Gamma(-t_{1})\Gamma(b_{n-1}+\dots + b_{2} - a_n -
      \dots - a_{3} + t_{n-2} + \dots
      +t_{1})}{\Gamma(b_{n-1}+\dots+b_{1} -
      a_n - \dots - a_{3} + t_{n-2} + \dots + t_{1})} \\
    & \phantom{=\int \frac{\diff{}{t_{n-3}}}{2\pi i} } \cdot \Gamma(b_{1}-a_{3}+t_{1})\\
    & \phantom{=} \cdot \,{}_{2}
    F_{1}\left(\genfrac{}{}{0pt}{}{a_1, a_{2}}{b_{n-1}+\dots+b_{1} -
        a_n -\dots - a_{3}+
        t_{n-2}+\dots + t_1};z\right)
  \end{align*}
Introducing new integration variables by
 \[
  u_k = \sum_{j=k}^{n-2} t_j, \qquad k=1,\dots, n-2
 \]
setting $u=u_1$ and using~\eqref{eq:c} yields the claim.
  \end{proof}

\subsection{A recurrence for $\widetilde G_p$}
\label{sec:recurr-cert-meij}

In this section we prove a new recurrence for the functions
$\widetilde G_p(z)$ that were defined in~\eqref{eq:Gtilde} and
appeared in Proposition~\ref{prop:6}. The recurrence is similar to the recurrence for ${}_nF_{n-1}(z)$ in the last section. 
\begin{lem}
  \label{lem:15}
    For any $1\leq p < n$ and if $\alpha_{p-1}+\gamma_p,
    \alpha_p+\gamma_p \not \in \mZ_{\leq 0}$, then
    \[
    \begin{aligned}
      \widetilde G_p \left(\genfrac{}{}{0pt}{0}{\alpha_{1},\dots,\alpha_p}{\gamma_{1},\dots,\gamma_p};z\right) &=\Gamma(\alpha_{p-1}+\gamma_p)\Gamma(\alpha_p+\gamma_p) \\
      &\phantom{=} \cdot \int \frac{\diff{}{s}}{2 \pi i}\e{\pm \pi i
        s}\frac{\Gamma(\alpha_{p-1}+s)\Gamma(\alpha_p+s)}{\Gamma(\alpha_{p-1}+\alpha_p+\gamma_p+s)}
      \widetilde G_{p-1}\left(\genfrac{}{}{0pt}{0}{\alpha_{1},\dots,\alpha_{p-2},-s}{\gamma_{1},\dots,\gamma_{p-1}};z\right)
    \end{aligned}
  \]
\end{lem}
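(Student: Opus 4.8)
The plan is to imitate the structure of the proof of B\"uhring's recurrence (Lemma~\ref{lem:14}) and of Proposition~\ref{prop:8}: substitute a Mellin--Barnes representation of the lower-order object, interchange the order of integration, and collapse the inner integral with Barnes' second lemma, Lemma~\ref{lem:5}(2). First I would record the simplified integrand. Writing out the Meijer $G$-function in the definition~\eqref{eq:Gtilde} and cancelling the factor $\prod_{h}\Gamma(1-\gamma_h+t)$ against the denominator gives the clean representation
\[
  \widetilde G_p\left(\genfrac{}{}{0pt}{0}{\alpha_{1},\dots,\alpha_p}{\gamma_{1},\dots,\gamma_p};z\right)
  = \int \frac{\diff{}{t}}{2\pi i}\,\e{i\pi(p-2)t} z^t \prod_{j=1}^{p}\Gamma(\alpha_j+t)\,\prod_{h=1}^{p}\Gamma(\gamma_h-t),
\]
and correspondingly, with the parameter set $(\alpha_1,\dots,\alpha_{p-2},-s;\gamma_1,\dots,\gamma_{p-1})$,
\[
  \widetilde G_{p-1}\left(\genfrac{}{}{0pt}{0}{\alpha_{1},\dots,\alpha_{p-2},-s}{\gamma_{1},\dots,\gamma_{p-1}};z\right)
  = \int \frac{\diff{}{t}}{2\pi i}\,\e{i\pi(p-3)t} z^t\,\Gamma(-s+t)\prod_{j=1}^{p-2}\Gamma(\alpha_j+t)\,\prod_{h=1}^{p-1}\Gamma(\gamma_h-t).
\]
Inserting the latter into the right-hand side of the claim turns it into an iterated integral in $s$ and $t$.

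Next I would interchange the order of integration and carry out the $s$-integral first. After the swap the inner integral reads
\[
  \int \frac{\diff{}{s}}{2\pi i}\,\e{\pi i s}\,
  \frac{\Gamma(\alpha_{p-1}+s)\Gamma(\alpha_p+s)\Gamma(t-s)}{\Gamma(\alpha_{p-1}+\alpha_p+\gamma_p+s)},
\]
which is exactly of the form treated in Lemma~\ref{lem:5}(2) with $\lambda_1=\alpha_{p-1}$, $\lambda_2=\alpha_p$, $\mu=t$ and $\nu=\alpha_{p-1}+\alpha_p+\gamma_p$; the hypothesis $\Re(\nu-\lambda_1-\lambda_2-\mu)>0$ becomes $\Re(\gamma_p-t)>0$, which holds along a $t$-contour separating the poles of $\Gamma(\gamma_h-t)$ from those of $\Gamma(\alpha_j+t)$. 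The lemma evaluates this to
\[
  \e{\pi i t}\,\frac{\Gamma(\alpha_{p-1}+t)\Gamma(\alpha_p+t)\Gamma(\gamma_p-t)}{\Gamma(\alpha_{p-1}+\gamma_p)\Gamma(\alpha_p+\gamma_p)}.
\]
The two prefactors $\Gamma(\alpha_{p-1}+\gamma_p)\Gamma(\alpha_p+\gamma_p)$ then cancel, the two Gamma products combine to $\prod_{j=1}^{p}\Gamma(\alpha_j+t)\prod_{h=1}^{p}\Gamma(\gamma_h-t)$, and the exponential factors recombine as $\e{i\pi(p-3)t}\cdot\e{\pi i t}=\e{i\pi(p-2)t}$, which is precisely the phase appearing in the simplified representation of $\widetilde G_p$ above. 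This reproduces $\widetilde G_p(z)$ and proves the identity.

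I expect the only genuine difficulty to be the analytic justification of the contour manipulations, exactly as in Proposition~\ref{prop:8} and Lemma~\ref{lem:14}. One must position the $s$- and $t$-contours so that $\Re s<\Re t$ and so that the $s$-poles at $s=-\alpha_{p-1}-k,\,-\alpha_p-k$ lie to the left while the pole at $s=t$ lies to the right, with the lower bound of the distance between the two contours strictly positive; this legitimises the Fubini-type interchange. The standing hypotheses $\alpha_{p-1}+\gamma_p,\ \alpha_p+\gamma_p\notin\mZ_{\leq 0}$ guarantee that the prefactor Gamma functions (and the Gamma functions produced in the denominator by Lemma~\ref{lem:5}(2)) are finite, so that the cancellation is valid. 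Finally, the absolute-convergence condition needed to run this argument holds only in a subregion of parameter space; as in the cited proofs, I would establish the identity there and then extend it to all admissible parameters by analytic continuation, taking the $+$ sign throughout in the $\e{\pm\pi i s}$ of Lemma~\ref{lem:5}(2) (the $-$ sign giving the conjugate contour representation).
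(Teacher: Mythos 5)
Your proposal is correct and is essentially the paper's own argument run in the reverse direction: the paper starts from $\widetilde G_p$, uses Lemma~\ref{lem:5}(2) with the same parameter assignment $\lambda_1=\alpha_{p-1}$, $\lambda_2=\alpha_p$, $\mu=t$, $\nu=\alpha_{p-1}+\alpha_p+\gamma_p$ to expand $\Gamma(\alpha_{p-1}+t)\Gamma(\alpha_p+t)\Gamma(\gamma_p-t)$ as an $s$-integral valid for $\Re(\gamma_p-t)>0$, interchanges the integrals, and recognizes $\widetilde G_{p-1}$, whereas you verify the identity by collapsing the $s$-integral; the key lemma, convergence condition, Fubini interchange, and analytic continuation step are identical. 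Your tracking of the phase factors $\e{i\pi(p-3)t}\cdot\e{i\pi t}=\e{i\pi(p-2)t}$ is in fact more explicit than the paper's displayed equations.
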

\begin{proof}
  We apply Lemma~\ref{lem:5}(2) with $\lambda_1=\alpha_{p-1},\lambda_2
  = \alpha_p, \mu=t, \nu=\alpha_{p-1}+\alpha_p+\gamma_p$ to obtain
  \[
  \begin{aligned}
    \widetilde G_p
    \left(\genfrac{}{}{0pt}{0}{\alpha_{1},\dots,\alpha_p}{\gamma_{1},\dots,\gamma_p};z\right)
    &= \Gamma(\alpha_{p-1}+\gamma_p)\Gamma(\alpha_p+\gamma_p)\int
    \frac{\diff{}{t}}{2\pi i}\, z^t \prod_{h=1}^{p-2}\Gamma(\alpha_h+t) \prod_{h=1}^{p-1}
    \Gamma(\gamma_h-t) \\
    &\phantom{=} \cdot \int \frac{\diff{}{s}}{2 \pi i}\e{\pm \pi i
      s}\frac{\Gamma(t-s)\Gamma(\alpha_{p-1}+s)\Gamma(\alpha_p+s)}{\Gamma(\alpha_{p-1}+\alpha_p+\gamma_p+s)}
    \end{aligned}
  \]
  which is valid for $\Re(\gamma_p-t) > 0$. This justifies interchanging
the order of integration as in the proof of Proposition~\ref{prop:8}
and yields, after extension to the whole $t$ plane by analytic continuation of the
parameters, the desired result.
  \[
   \begin{aligned}
    \widetilde G_p
    \left(\genfrac{}{}{0pt}{0}{\alpha_{1},\dots,\alpha_p}{\gamma_{1},\dots,\gamma_p};z\right)
      &= \Gamma(\alpha_{p-1}+\gamma_p)\Gamma(\alpha_p+\gamma_p) \int \frac{\diff{}{s}}{2 \pi i}\e{\pm \pi i
      s}\frac{\Gamma(\alpha_{p-1}+s)\Gamma(\alpha_p+s)}{\Gamma(\alpha_{p-1}+\alpha_p+\gamma_p+s)}\\
    &\phantom{=} \cdot \int
    \frac{\diff{}{t}}{2\pi i}\, z^t
    \prod_{h=1}^{p-2}\Gamma(\alpha_h+t) \Gamma(-s+t) \prod_{h=1}^{p-1}
    \Gamma(\gamma_h-t) \\
  \end{aligned}\qedhere
  \]
\end{proof}

\begin{prop}
 \label{prop:10} 
  For $3 \leq p < n$, and if  $\alpha_{p}+\gamma_p,
    \alpha_s+\gamma_{s+1} \not \in \mZ_{\leq 0}$, $s=2,\dots,p-1$, then
  \[
    \widetilde G_p \left(\genfrac{}{}{0pt}{0}{\alpha_{1},\dots,\alpha_p}{\gamma_{1},\dots,\gamma_p};z\right) = \int \frac{\diff{}{s}}{2 \pi i}\, B_p(s)  \widetilde G_2 \left(\genfrac{}{}{0pt}{0}{\alpha_{1},-s}{\gamma_{1},\gamma_2};z\right)
  \]
  where
  \[
  \begin{aligned}
    B_p(s) &= \Gamma(\alpha_p+\gamma_p) \Gamma(\alpha_{p-1}+\gamma_p) \Gamma(\alpha_{p-2}+\gamma_{p-1})\dots\Gamma(\alpha_2+\gamma_3) \\
   &\phantom{=} \cdot\int \frac{\diff{}{s_{p-2}}}{2\pi i}\, \e{-i\pi s_{p-2}} \frac{\Gamma(\alpha_p+s_{p-2})\Gamma(\alpha_{p-1}+s_{p-2})\Gamma(\gamma_{p-1}+s_{p-2})}{\Gamma(\alpha_p+\alpha_{p-1}+\gamma_p+s_{p-2})} \\
   &\phantom{=} \cdot\int \frac{\diff{}{s_{p-3}}}{2\pi i}\, \e{-i\pi s_{p-3}} \frac{\Gamma(\alpha_{p-2}+s_{p-3})\Gamma(\gamma_{p-2}+s_{p-3})\Gamma(-s_{p-2}+s_{p-3})}{\Gamma(\alpha_{p-2}+\gamma_{p-1}-s_{p-2}+s_{p-3})} \\
   &\phantom{=}\cdot \phantom{int} \dots\\
   &\phantom{=} \cdot\int \frac{\diff{}{s_{2}}}{2\pi i}\, \e{-i\pi s_{2}} \frac{\Gamma(\alpha_{3}+s_{2})\Gamma(\gamma_{3}+s_{2})\Gamma(-s_{3}+s_{2})}{\Gamma(\alpha_{3}+\gamma_{4}-s_{3}+s_{2})} \\
  &\phantom{=} \cdot \e{-i\pi s} \frac{\Gamma(\alpha_2+s)\Gamma(-s_2+s)}{\Gamma(\alpha_2+\gamma_3-s_2+s)}\\
  \end{aligned}
  \]
\end{prop}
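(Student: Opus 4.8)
We want to prove Proposition 5.6 (prop:10). It expresses $\widetilde{G}_p$ (an order-$p$ Meijer function) as a single Mellin–Barnes integral over $s$ of a kernel $B_p(s)$ times $\widetilde{G}_2$. The structure mirrors Proposition 4.12 (prop:13 / Bühring), which does the same for ${}_nF_{n-1}$. The proof tool is Lemma 5.5 (lem:15), which reduces $\widetilde{G}_p$ to $\widetilde{G}_{p-1}$ via a single Barnes-type integral, using Lemma 2.10(2) (Barnes' second lemma variant).

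**The plan.** The natural approach is iterated application of Lemma 5.5, exactly parallel to how Proposition 4.12 iterates Lemma 4.9 (lem:14).

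First I would apply Lemma 5.5 once to peel off $\alpha_p$: this introduces an integration variable (call it $s_{p-2}$) and reduces $\widetilde{G}_p$ to $\widetilde{G}_{p-1}$ with parameters $(\alpha_1,\dots,\alpha_{p-2},-s_{p-2})$ over $(\gamma_1,\dots,\gamma_{p-1})$. I would then apply Lemma 5.5 again to this $\widetilde{G}_{p-1}$, peeling off its last upper parameter $-s_{p-2}$ together with $\alpha_{p-2}$, introducing a new variable $s_{p-3}$ and a factor $\Gamma(-s_{p-2}+s_{p-3})$, and reducing to $\widetilde{G}_{p-2}$. Iterating down to $\widetilde{G}_2$, each step contributes one Barnes integral whose integrand matches the corresponding line of the displayed $B_p(s)$. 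The main bookkeeping is tracking how the ``moving'' upper parameter (always $-s_k$ from the previous step) enters the next invocation of Lemma 5.5: in the $k$-th step the roles $\lambda_1=\alpha_{\text{something}}$, $\lambda_2=-s_{k+1}$, $\mu=t$, $\nu=\alpha+\gamma+(-s_{k+1})$ produce the ratio $\Gamma(\alpha+s_k)\Gamma(\gamma+s_k)\Gamma(-s_{k+1}+s_k)/\Gamma(\alpha+\gamma-s_{k+1}+s_k)$ seen in the generic line.

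After $p-2$ iterations I would collect all the Gamma prefactors $\Gamma(\alpha_p+\gamma_p)\Gamma(\alpha_{p-1}+\gamma_p)\cdots\Gamma(\alpha_2+\gamma_3)$ and the nested integrals into a single kernel, define $B_p(s)$ to be precisely that kernel (with $s=s_1$ being the outermost variable that feeds into $\widetilde{G}_2$), and read off that the final $\widetilde{G}_{2}$ carries parameters $(\alpha_1,-s)$ over $(\gamma_1,\gamma_2)$, yielding the claim. I would verify the convergence/hypothesis at each step: Lemma 5.5 requires $\alpha_{k-1}+\gamma_k,\alpha_k+\gamma_k\notin\mZ_{\leq 0}$, which is exactly the stated condition $\alpha_p+\gamma_p,\ \alpha_s+\gamma_{s+1}\notin\mZ_{\leq 0}$ for $s=2,\dots,p-1$ (rewritten so that the moving parameter $-s_{k+1}$ never creates a forbidden pole, since it is an integration variable on a suitable vertical contour). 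As in the proofs of Propositions 2.8 and 4.11, the interchange of the nested integrations is justified by the convergence condition $\Re(\gamma-t)>0$ before analytically continuing the parameters.

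**The main obstacle.** The crux is not any single analytic estimate but the inductive verification that the parameters of the intermediate $\widetilde{G}_{p-k}$ stay in the form required to reapply Lemma 5.5, together with the clean identification of the accumulated integrand with the explicit $B_p(s)$ displayed. I expect the genuinely delicate point to be checking that the shifting argument $-s_{k}$ in the upper-parameter slot interacts correctly with the hypothesis set: one must ensure the contours in the $s_k$-variables can be chosen so that all the $\Gamma$-factors have their poles correctly separated and the hypotheses of Lemma 2.10(2) ($\Re(\nu-\lambda_1-\lambda_2-\mu)>0$) hold along the contour, which is then removed by analytic continuation in the parameters exactly as in Lemma 5.5. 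Making the index bookkeeping in the telescoping product match the stated lines of $B_p(s)$ line-by-line is the part most prone to off-by-one errors, so I would set up explicit notation $u_k$ (or $s_k$) for the running arguments before starting the induction.
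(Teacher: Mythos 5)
Your proposal matches the paper's proof exactly: the paper's entire argument for this proposition is ``Repeated application of Lemma~\ref{lem:15} immediately yields the result,'' which is precisely the iterated peeling-off of $\alpha_p,\alpha_{p-1},\dots$ that you describe, with the moving upper parameter $-s_k$ feeding into the next invocation. Your additional attention to the contour separation and the convergence hypotheses at each step is sound and simply fills in details the paper leaves implicit.
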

\begin{proof}
  Repeated application of Lemma~\ref{lem:15} immediately yields the result.
\end{proof}
For the applications in Section~\ref{sec:applications} we give the
first case explicitly.
\begin{equation}
  \label{eq:B3}
   B_3(s) = \Gamma(\alpha_3+\gamma_3) \Gamma(\alpha_2+\gamma_3)
   \e{-i\pi s}
   \frac{\Gamma(\alpha_2+s)\Gamma(\alpha_3+s)}{\Gamma(\alpha_2+\alpha_3+\gamma_3+s)}
\end{equation}

\section{Analytic continuation to $z=1$ for $n>2$}
\label{sec:analyt-cont-z=1}

In this section we discuss the analytic continuation from $z=0$ to
$z=1$ for the solutions to the hypergeometric differential equation of
arbitrary order $n>2$. We first review the known results for the special
solution $\bar \xi_n(z)$ and the holomorphic solutions
${}_nF_{n-1}(z)$, then we prove the theorem for the logarithmic
solutions $G_p(z)$.

\subsection{The solution $\xi_n$}
\label{sec:solution-xi_n-1}

For completeness and application in Section~\ref{sec:applications} we
reproduce here N\o rlunds beautiful formula for the analytic
continuation of the special solution $\xi_n$. 
\begin{prop}
  \label{prop:16}
  \begin{enumerate}
  \item If $\gamma_1,\dots,\gamma_n$ are not in resonance, then
    \[
    \xi_n(z) = \Gamma(\beta_n+1) \sum_{j=1}^n
    \frac{\prod_{\substack{k=1,k\not = j}}^n \Gamma( \gamma_k -
      \gamma_j )}{ \prod_{k=1}^n \Gamma(1-\alpha_k-\gamma_j ) }
    y_j^*(z)
    \]
    \item If $\gamma_1,\dots,\gamma_q$ are in resonance, then
    \[
    \begin{aligned}
      \xi_n(z) &= \frac{\Gamma(\beta_n+1)}{2\pi i} \sum_{j=1}^q
      \frac{(-1)^j}{(q-j)!} \psi^{(q-j)}(\e{2\pi i \gamma_1}) \e{-2\pi
        i
        j \gamma_1} y_j^*(z) \\
      &\phantom{=} + \frac{1}{\pi} \sum_{j=q+1}^n
      \frac{\prod_{\substack{k=1,k\not = j}}^n \Gamma( \gamma_k -
        \gamma_j )}{ \prod_{k=1}^n \Gamma(1-\alpha_k-\gamma_j ) }
      y_j^*(z)\\
    \end{aligned}
    \]
    where
    \[
    \psi(x) = \e{-i\pi \beta_n} \frac{\prod_{k=1}^n (x-\e{-2\pi i
        \alpha_k})}{\prod_{k=q+1}^n (x-\e{2\pi i \gamma_k})}
    \]
  \end{enumerate}
\end{prop}
\begin{proof}
  We are not aware of a simple proof involving only integral
  representations. Therefore, we refer to the
  proof of~\cite{Norlund:1955ab} based on partial fraction decompositions
  and the special properties of the functions $\xi_n(z)$.
\end{proof}

\subsection{Holomorphic solutions}
\label{sec:holom-solut}

For completeness and application in Section 6 we reproduce here part
of the result~\cite{Buehring:1992ab} for the analytic continuation of
the holomorphic solutions ${}_{n}F_{n-1}(z)$ in terms of integral
representations. The complete result will be given in
Section~\ref{sec:evaluation-integrals} when we evaluate the following
integral. In order to state the result, recall the value of $c$
from~\eqref{eq:c}. 
\begin{prop}
  \label{prop:17}
    \[
  \begin{aligned}
  & \frac{\Gamma(a_1)\cdots\Gamma(a_n)}{\Gamma(b_1)\cdots\Gamma(b_{n-1})} \,{}_n F_{n-1}\left(\genfrac{}{}{0pt}{}{a_1,\dots,a_n}{b_1,\dots, b_{n-1}};z\right) \\
  & = \int \frac{\diff{}{t}}{2\pi i}\int \frac{\diff{}{u}}{2\pi i} \widetilde A^{(n)}(t)\frac{\Gamma(a_1+u)\Gamma(a_2+u)\Gamma(c+t-u)\Gamma(-u)}{\Gamma(c+a_2+t)\Gamma(c+a_1+t)} (1-z)^u 
  \end{aligned}
  \]
\end{prop}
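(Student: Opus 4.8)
The plan is to combine Proposition~\ref{prop:13}, which expresses the holomorphic solution ${}_nF_{n-1}(z)$ as a single Mellin--Barnes integral over $\widetilde A^{(n)}(t)$ with a kernel containing a ${}_2F_1$, with the $n=2$ analytic continuation result of Proposition~\ref{prop:8}, which performs the passage from $z=0$ to $z=1$ on the level of the ${}_2F_1$. Proposition~\ref{prop:13} gives, after relabelling the integration variable $u$ to $t$,
\[
  \frac{\Gamma(a_1)\cdots\Gamma(a_n)}{\Gamma(b_1)\cdots\Gamma(b_{n-1})} \,{}_n F_{n-1}\left(\genfrac{}{}{0pt}{}{a_1,\dots,a_n}{b_1,\dots,b_{n-1}};z\right)
  = \int \frac{\diff{}{t}}{2\pi i}\, \widetilde A^{(n)}(t)
  \frac{\Gamma(a_1)\Gamma(a_2)}{\Gamma(c+a_1+a_2+t)}
  \,{}_2F_1\!\left(\genfrac{}{}{0pt}{}{a_1,a_2}{c+a_1+a_2+t};z\right),
\]
so the key is to rewrite the inner ${}_2F_1(z)$ as a Mellin--Barnes integral in $1-z$.

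First I would apply Proposition~\ref{prop:8} with the substitution $a=a_1$, $b=a_2$, and $c_{\mathrm{Prop}}=c+a_1+a_2+t$; then $c_{\mathrm{Prop}}-a-b=c+t$, $c_{\mathrm{Prop}}-a=c+a_2+t$, and $c_{\mathrm{Prop}}-b=c+a_1+t$. The proposition yields
\[
  \frac{\Gamma(a_1)\Gamma(a_2)}{\Gamma(c+a_1+a_2+t)}\,{}_2F_1\!\left(\genfrac{}{}{0pt}{}{a_1,a_2}{c+a_1+a_2+t};z\right)
  = \frac{1}{\Gamma(c+a_2+t)\Gamma(c+a_1+t)} \int \frac{\diff{}{u}}{2\pi i}\,\Gamma(a_1+u)\Gamma(a_2+u)\Gamma(c+t-u)\Gamma(-u)(1-z)^u.
\]
Substituting this into the expression above produces exactly the double Mellin--Barnes integral claimed, with the denominator $\Gamma(c+a_2+t)\Gamma(c+a_1+t)$ sitting inside, and the factor $\widetilde A^{(n)}(t)$ carried along untouched.

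The main obstacle I anticipate is justifying the interchange of the $t$-integration with the substitution of the $u$-integral, i.e.\ showing that the resulting double integral is absolutely convergent and that no poles collide when the contours are chosen. One must select the $t$-contour so that, for every $t$ on it, the hypotheses of Proposition~\ref{prop:8} hold---in particular $\Re(c+t)>0$ so that the Barnes-type integral defining the ${}_2F_1$ continuation converges---while simultaneously keeping the $t$-contour positioned correctly relative to the poles of $\widetilde A^{(n)}(t)$. This is the same Fubini-type argument invoked in the proofs of Proposition~\ref{prop:8} and Lemma~\ref{lem:14}: one arranges the separating strips so the lower bound of the distance between the relevant contours is strictly positive, interchanges, and then extends to all parameter values by analytic continuation. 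I would handle it exactly as there, citing the convergence conditions of Lemma~\ref{lem:5} and the binomial identity~\eqref{eq:binomialidentity} that underlies Proposition~\ref{prop:8}; no genuinely new estimate is needed, so the proof is essentially a one-line substitution modulo this contour bookkeeping.
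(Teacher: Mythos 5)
Your proposal is correct and is exactly the paper's argument: the paper's proof of Proposition~\ref{prop:17} simply states that it follows immediately from Propositions~\ref{prop:13} and~\ref{prop:8}, which is the substitution you carry out explicitly (with the correct identification $c_{\mathrm{Prop}}=c+a_1+a_2+t$). Your additional remarks on the Fubini-type contour bookkeeping are consistent with how the paper handles such interchanges elsewhere.
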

\begin{proof}
  This follows immediately from Propositions~\ref{prop:13} and~\ref{prop:8}.
\end{proof}

\subsection{Logarithmic solutions}
\label{sec:logar-solut}

In this section we start to prove the main theorem. We give an integral
representation for the solutions $G_p(z)$, $p>1$ which admits an
easy power series expansion about $z=1$. This expansion will be
discussed in Section~\ref{sec:evaluation-integrals}. If
$\{\gamma_1,\dots,\gamma_q\}$, $q>1$, is resonant, then, as discussed in
Section~\ref{sec:hyperg-diff-equat}, have logarithms near $z=0$.
\begin{prop}
  \label{thm:12}
  For $2 < p \leq q \leq n$, and if $\Re \beta_n > \Re\beta_p$,
 $\Re(\alpha_s+\gamma_j)>0$, $j=1\dots,p$, $s=p+1,\dots,n$, $\alpha_{p}+\gamma_p,
    \alpha_s+\gamma_{s+1} \not \in \mZ_{\leq 0}$, $s=2,\dots,p-1$ then
  \[
    \begin{aligned}
    G_p(z) &= \Gamma(\alpha_1+\gamma_2) \int \frac{\diff{}{v}}{2\pi i} \e{-i\pi v} \Gamma(\alpha_1+\gamma_1+v) \Gamma(-v) \\
   &\phantom{=} \cdot \int \frac{\diff{}{s}}{2 \pi i} B_p(s) \frac{ \Gamma(\gamma_2-s) \Gamma(\gamma_1+v-s) }{\Gamma(\alpha_1+\gamma_1+\gamma_2+v-s)}\\
   &\phantom{=} \cdot 
    z^{\gamma_1} \int
    \frac{\diff{}{u}}{2\pi i} \e{-i\pi u} z^u
    \frac{\Gamma(-v+u)\Gamma(-u)}{\Gamma(-v)} \prod_{s=p+1}^n
    \frac{\Gamma(\alpha_s+\gamma_1+u)}{\Gamma(1-\gamma_s+\gamma_1+u)}
  \end{aligned}
  \]
  If $p=2$, then
  \[
    \begin{aligned}
    G_2(z) &= \Gamma(\alpha_1+\gamma_2)\Gamma(\alpha_2+\gamma_2) \int \frac{\diff{}{v}}{2\pi i} \e{-i\pi v} \frac{\Gamma(\alpha_1+\gamma_1+v) \Gamma(\alpha_2+\gamma_1+v)  \Gamma(-v)}{\Gamma(\alpha_1+\alpha_2+\gamma_1+\gamma_2+v)} \\
   &\phantom{=} \cdot 
    z^{\gamma_1} \int
    \frac{\diff{}{u}}{2\pi i} \e{-i\pi u} z^u
    \frac{\Gamma(-v+u)\Gamma(-u)}{\Gamma(-v)} \prod_{s=3}^n
    \frac{\Gamma(\alpha_s+\gamma_1+u)}{\Gamma(1-\gamma_s+\gamma_1+u)}
  \end{aligned}
  \]
\end{prop}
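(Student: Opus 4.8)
The plan is to combine the recurrence in Proposition~\ref{prop:6}, which expresses $G_p(z)$ as a convolution integral involving $\widetilde G_p$ and $\bar\xi_{n-p}$, with the recurrence in Proposition~\ref{prop:10}, which reduces $\widetilde G_p$ to $\widetilde G_2$ via the kernel $B_p(s)$. First I would substitute the expression for $\widetilde G_p$ from Proposition~\ref{prop:10} into Proposition~\ref{prop:6}, so that
\[
  G_p(z) = \frac{1}{\Gamma(\beta_n-\beta_p)} \int_0^z \frac{\diff{}{t}}{t}\, \int \frac{\diff{}{s}}{2\pi i}\, B_p(s)\, \widetilde G_2\left(\genfrac{}{}{0pt}{0}{\alpha_1,-s}{\gamma_1,\gamma_2};t\right) \bar\xi_{n-p}\left(\genfrac{}{}{0pt}{0}{\alpha_{p+1},\dots,\alpha_n}{\gamma_{p+1},\dots,\gamma_n};\frac{z}{t}\right).
\]
The hypotheses $\Re\beta_n > \Re\beta_p$, $\Re(\alpha_s+\gamma_j)>0$ for $j=1,\dots,p$, $s=p+1,\dots,n$, together with $\alpha_p+\gamma_p, \alpha_s+\gamma_{s+1}\not\in\mZ_{\leq 0}$, are exactly what is needed to invoke these two propositions and to guarantee absolute convergence so that the order of the $s$-integration and the $t$-integration may be exchanged.

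Next I would insert the Mellin--Barnes representation of $\widetilde G_2\left(\genfrac{}{}{0pt}{0}{\alpha_1,-s}{\gamma_1,\gamma_2};t\right)$, which by~\eqref{eq:Gtilde} is
\[
  \widetilde G_2\left(\genfrac{}{}{0pt}{0}{\alpha_1,-s}{\gamma_1,\gamma_2};t\right) = \int \frac{\diff{}{w}}{2\pi i}\, t^w\, \frac{\Gamma(\alpha_1+w)\Gamma(-s+w)}{\Gamma(1-\gamma_1+w)\Gamma(1-\gamma_2+w)}\Gamma(\gamma_1-w)\Gamma(\gamma_2-w).
\]
After exchanging the $w$-integration with the $t$-integration, the inner convolution $\int_0^z \frac{\diff{}{t}}{t}\, t^w\, \bar\xi_{n-p}(\dots;z/t)$ is precisely of the form handled by Lemma~\ref{lem:1} (with $x=w$), producing the factor $z^w\,\Gamma(\beta_n-\beta_p)\prod_{s=p+1}^n \frac{\Gamma(\alpha_s+w)}{\Gamma(1-\gamma_s+w)}$, which cancels the prefactor $1/\Gamma(\beta_n-\beta_p)$. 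At this stage I expect the integrand to carry a product of Gamma functions in $w$ together with $B_p(s)$ and the $\prod_{s=p+1}^n$ factor, and the remaining task is to recast the $w$-integral into the stated form involving the variables $v$ and $u$.

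The crucial manipulation is to split the $w$-dependence appropriately. The plan is to apply Lemma~\ref{lem:2} in the inner step rather than Lemma~\ref{lem:1} directly: writing the binomial factor via~\eqref{eq:binomialidentity} introduces the variable $u$ and converts the convolution into the $u$-integral $\int \frac{\diff{}{u}}{2\pi i}\e{-i\pi u} z^u \frac{\Gamma(-v+u)\Gamma(-u)}{\Gamma(-v)}\prod_{s=p+1}^n \frac{\Gamma(\alpha_s+\gamma_1+u)}{\Gamma(1-\gamma_s+\gamma_1+u)}$ that appears in the claim, with $v$ tracking the shift $w = \gamma_1 + v$ and the factor $z^{\gamma_1}$ pulled out. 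The $s$-integral then retains $B_p(s)$ multiplied by the ratio $\frac{\Gamma(\gamma_2-s)\Gamma(\gamma_1+v-s)}{\Gamma(\alpha_1+\gamma_1+\gamma_2+v-s)}$, which is what survives from the $\widetilde G_2$ Gamma factors after Barnes' first lemma (Lemma~\ref{lem:5}(1)) is used to collapse the $w$ and the $\widetilde G_2$ contributions.

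The main obstacle will be the careful bookkeeping of contour placement and the repeated interchange of integration orders: each swap requires a strip of convergence, and the conditions on the real parts in the hypotheses must be checked to line up with the poles of the newly introduced Gamma functions so that Lemma~\ref{lem:5} and Lemma~\ref{lem:2} apply with admissible contours. The $p=2$ case is the degenerate base of the recurrence, where $B_2(s)$ collapses and $\widetilde G_2$ appears directly without the intermediate $s$-convolution; here the computation specializes by setting $-s = \alpha_2$ and performing the $s$-integral against $B_p$ trivially, which is why the $\Gamma(\alpha_2+\gamma_2)$ factor and the denominator $\Gamma(\alpha_1+\alpha_2+\gamma_1+\gamma_2+v)$ appear in closed form rather than under an integral sign. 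I would treat $p=2$ separately by running the same argument but invoking Proposition~\ref{prop:6} with $\widetilde G_2$ directly, bypassing Proposition~\ref{prop:10}.
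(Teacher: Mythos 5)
Your final route --- Proposition~\ref{prop:6} to pull out $\bar\xi_{n-p}$, Proposition~\ref{prop:10} to reduce $\widetilde G_p$ to $\widetilde G_2$ via the kernel $B_p(s)$, then the order--$2$ continuation (Barnes' first lemma together with the binomial identity, i.e.\ exactly Proposition~\ref{prop:8} after the shift $w=\gamma_1+v$) to create the $(1-t)^v$ factor so that Lemma~\ref{lem:2} can absorb the $t$--convolution into the $u$--integral, with $p=2$ handled by bypassing Proposition~\ref{prop:10} --- is precisely the paper's proof. Two small caveats: the Mellin--Barnes representation you write for $\widetilde G_2$ carries spurious factors $\Gamma(1-\gamma_1+w)^{-1}\Gamma(1-\gamma_2+w)^{-1}$ (for $p=n=2$ the definition~\eqref{eq:Gtilde} collapses to $\int \frac{\diff{}{w}}{2\pi i}\, t^w\, \Gamma(\alpha_1+w)\Gamma(-s+w)\Gamma(\gamma_1-w)\Gamma(\gamma_2-w)$), and your first attempt (exchanging the $w$-- and $t$--integrations and invoking Lemma~\ref{lem:1}) is a dead end you rightly abandon, since it merely reassembles the single Mellin--Barnes integral for $G_p(z)$ rather than producing the separated $v$-- and $u$--integrals that make the expansion at $z=1$ possible.
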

\begin{proof}
  We first consider the case $p>2$.
  % Note the case $p=2$ needs a seperate discussion (or we extend the notation to include it in the case for general $p$). 
  % By lemma~\ref{lem:11} we need to determine $G_p
  % \left(\genfrac{}{}{0pt}{0}{\alpha_{1},\dots,\alpha_n}{\gamma_{1},\dots,\gamma_n};1\right)$
  % and then substitute $\gamma_p \to \gamma_p+k$. 
  We abbreviate the left--hand side by $G_p(z)$. By
  Proposition~\ref{prop:6} we can have
  \[
    \begin{aligned}
    G_p(z) &= \frac{1}{\Gamma(\beta_n-\beta_p)}
    \int_0^z \frac{\diff{}{t}}{t}\, \widetilde G_p \left(\genfrac{}{}{0pt}{0}{\alpha_{1},\dots,\alpha_p}{\gamma_{1},\dots,\gamma_p};t\right)\bar\xi_{n-p}\left(\genfrac{}{}{0pt}{0}{\alpha_{p+1},\dots,\alpha_n}{\gamma_{p+1},\dots,\gamma_n};\frac{z}{t}\right)\\
  \end{aligned}
  \]
  By Proposition~\ref{prop:10} we can write this as
  \[
    \begin{aligned}
    G_p(z) &= \frac{1}{\Gamma(\beta_n-\beta_p)}
    \int_0^z \frac{\diff{}{t}}{t}\, \bar\xi_{n-p}\left(\genfrac{}{}{0pt}{0}{\alpha_{p+1},\dots,\alpha_n}{\gamma_{p+1},\dots,\gamma_n};\frac{z}{t}\right) \int \frac{\diff{}{s}}{2 \pi i} B_p(s) G_2 \left(\genfrac{}{}{0pt}{0}{\alpha_{1},-s}{\gamma_{1},\gamma_2};t\right)
  \end{aligned}
  \]
  Shifting the integration variable in $G_2$ yields
  \[
G_2 \left(\genfrac{}{}{0pt}{0}{\alpha_{1},-s}{\gamma_{1},\gamma_2};t\right) = t^{\gamma_1} G_2 \left(\genfrac{}{}{0pt}{0}{\alpha_{1}+\gamma_1,\gamma_1-s}{\gamma_{2}-\gamma_1,0};t\right)
  \]
  By Proposition~\ref{prop:8} with $a=\alpha_1+\gamma_1, b=\gamma_1-s, c=\alpha_1+\gamma_1+\gamma_2-s$ and using the integral representation of ${}_2F_1$ we obtain
  \[
    \begin{aligned}
    G_p(z) &= \frac{\Gamma(\alpha_1+\gamma_2) }{\Gamma(\beta_n-\beta_p)}
    \int_0^z \frac{\diff{}{t}}{t}\,t^{\gamma_1} \bar\xi_{n-p}\left(\genfrac{}{}{0pt}{0}{\alpha_{p+1},\dots,\alpha_n}{\gamma_{p+1},\dots,\gamma_n};\frac{z}{t}\right) \int \frac{\diff{}{s}}{2 \pi i} \,B_p(s) \Gamma(\gamma_2-s) \\
   &\phantom{=} \cdot \int \frac{\diff{}{v}}{2\pi i} (1-t)^v \e{-i\pi v} \frac{\Gamma(\alpha_1+\gamma_1+v)\Gamma(\gamma_1-s+v)\Gamma(-v)}{\Gamma(\alpha_1+\gamma_1+\gamma_2-s+v)} 
  \end{aligned}
  \]
Under the conditions on $\alpha_i,\beta_j,\gamma_k$ stated in the
claim, we can change the order of the integrals
  \[
    \begin{aligned}
    G_p(z) &= \frac{\Gamma(\alpha_1+\gamma_2)}{\Gamma(\beta_n-\beta_p)} \int \frac{\diff{}{v}}{2\pi i} \e{-i\pi v} \Gamma(\alpha_1+\gamma_1+v) \Gamma(-v) \\
   &\phantom{=} \cdot \int \frac{\diff{}{s}}{2 \pi i} B_p(s) \frac{\Gamma(\gamma_2-s) \Gamma(\gamma_1+v-s) }{\Gamma(\alpha_1+\gamma_1+\gamma_2+v-s)}\\ 
   &\phantom{=} \cdot 
    \int_0^z \frac{\diff{}{t}}{t}\, t^{\gamma_1} (1-t)^v  \bar\xi_{n-p}\left(\genfrac{}{}{0pt}{0}{\alpha_{p+1},\dots,\alpha_n}{\gamma_{p+1},\dots,\gamma_n};\frac{z}{t}\right) 
  \end{aligned}
  \]
Finally, the $t$ integral can be evaluated by Lemma~\ref{lem:2}. This
yields the claim.

  % \[
  %   \begin{aligned}
  %   G_p(z) &= \Gamma(\alpha_1+\gamma_2) \int \frac{\diff{}{v}}{2\pi i} \e{-i\pi v} \Gamma(\alpha_1+\gamma_1+v) \Gamma(-v) \\
  %  &\phantom{=} \cdot \int \frac{\diff{}{s}}{2 \pi i} B_p(s) \frac{ \Gamma(\gamma_2-s) \Gamma(\gamma_1+v-s) }{\Gamma(\alpha_1+\gamma_1+\gamma_2+v-s)}\\
  %  &\phantom{=} \cdot 
  %   z^{\gamma_1} \int
  %   \frac{\diff{}{u}}{2\pi i} \e{-i\pi u} z^u
  %   \frac{\Gamma(-v+u)\Gamma(-u)}{\Gamma(-v)} \prod_{s=p+1}^n
  %   \frac{\Gamma(\alpha_s+\gamma_1+u)}{\Gamma(1-\gamma_s+\gamma_1+u)}
  % \end{aligned}
  % \]
Now consider the case $p=2$. The first step using
Proposition~\ref{prop:6} for general $p$ applies here,
too. However, for the second step we do not need the recursion formula
from Proposition~\ref{prop:10}. The remaining steps are as above. We
shift again the integration variable in $G_2$ and use
Proposition~\ref{prop:8} to get
  \[
    \begin{aligned}
    G_2(z) &= \frac{\Gamma(\alpha_1+\gamma_2)\Gamma(\alpha_2+\gamma_2)}{\Gamma(\beta_n-\beta_2)}
    \int_0^z \frac{\diff{}{t}}{t}\,t^{\gamma_1} \bar\xi_{n-2}\left(\genfrac{}{}{0pt}{0}{\alpha_{3},\dots,\alpha_n}{\gamma_{3},\dots,\gamma_n};\frac{z}{t}\right) \\
   &\phantom{=} \cdot \int \frac{\diff{}{v} }{2\pi i} \,(1-t)^v\e{-i\pi v} \frac{\Gamma(\alpha_1+\gamma_1+v) \Gamma(\alpha_2+\gamma_1+v)  \Gamma(-v)}{\Gamma(\alpha_1+\alpha_2+\gamma_1+\gamma_2+v)} \\
  \end{aligned}
  \]
After changing the order of integration and evaluating the $t$ integral
with the help of Lemma~\ref{lem:2} we obtain the claim.
  % \[
  %   \begin{aligned}
  %   G_2(z) &= \Gamma(\alpha_1+\gamma_2)\Gamma(\alpha_2+\gamma_2) \int \frac{\diff{}{v}}{2\pi i} \e{-i\pi v} \frac{\Gamma(\alpha_1+\gamma_1+v) \Gamma(\alpha_2+\gamma_1+v)  \Gamma(-v)}{\Gamma(\alpha_1+\alpha_2+\gamma_1+\gamma_2+v)} \\
  %  &\phantom{=} \cdot 
  %   z^{\gamma_1} \int
  %   \frac{\diff{}{u}}{2\pi i} \e{-i\pi u} z^u
  %   \frac{\Gamma(-v+u)\Gamma(-u)}{\Gamma(-v)} \prod_{s=3}^n
  %   \frac{\Gamma(\alpha_s+\gamma_1+u)}{\Gamma(1-\gamma_s+\gamma_1+u)}
  % \end{aligned}
  % \]
\end{proof}

  \begin{rem}
    \label{rem:24}
    By~\eqref{eq:nonresonantyj} the $u$ integral in Theorem~\ref{thm:12} is nothing but
  the integral representation of a hypergeometric function:
  \[
  \begin{aligned}
    &z^{\gamma_1} \int \frac{\diff{}{u}}{2\pi i} \e{-i\pi u} z^u
    \frac{\Gamma(-v+u)\Gamma(-u)}{\Gamma(-v)} \prod_{s=p+1}^n
    \frac{\Gamma(\alpha_s+\gamma_1+u)}{\Gamma(1-\gamma_s+\gamma_1+u)}\\
    &= z^{\gamma_1} \prod_{k=p+1}^n
    \frac{\Gamma(\alpha_k+\gamma_1)}{\Gamma(1-\gamma_k+\gamma_1)} 
      {}_{n-p+1}F_{n-p}\left(\genfrac{}{}{0pt}{0}{-v,\alpha_{p+1}+\gamma_1,\dots,\alpha_n+\gamma_1}{1-\gamma_{p+1}+\gamma_1,\dots,1-\gamma_n+\gamma_1};z\right)
  \end{aligned}
  \]   
  \end{rem}

\section{Series expansions}
\label{sec:series-expansions}

\subsection{Evaluation of the integrals}
\label{sec:evaluation-integrals}

Ultimately, we want to compare the analytic continuation of the
integral representation with the series solutions obtained from the
Frobenius method. To achieve this we have to evaluate the various
integrals that we obtained in Section 5. We formulate the results as
corollaries.

For this purpose we define the numbers $A^{(n)}(k)$ by taking the
residue of the functions $\widetilde A^{(n)}(u)$ given in
Proposition~\ref{prop:13} at $u=k, k \in \mN$:
  \[
  A^{(n)}(k) := \Res_{u=k} \widetilde A^{(n)}(u).
  \]
Of course, the internal integrals of $A^{(n)}(u)$ for $n>3$ can be
evaluated with the residue theorem. For later purposes we give
the result for $n=3$ and $n=4$ obtained from~\eqref{eq:A3t}
and~\eqref{eq:A4t}, respectively:
\begin{align}
  \label{eq:A3}
  A^{(3)}(k) &=
  \frac{\Gamma(b_2-a_3+k)\Gamma(b_1-a_3+k)}{\Gamma(b_2-a_3)\Gamma(b_1-a_3)\Gamma(k+1)}\\
  \label{eq:A4}
   A^{(4)}(k) &= \frac{\Gamma(b_{3}+b_{2}
      - a_4 - a_{3} + k)}{\Gamma(b_{3}-a_{4})\Gamma(b_{2}-a_{4})
      \Gamma(b_1-a_3)} \\
    &\phantom{=} \cdot \sum_{k_2=0}^k\frac{\Gamma(b_{1}-a_{3}+k-k_2) \Gamma(b_{3} - a_4 +
      k_{2})\Gamma(b_{2}-a_{4}+k_{2}) }{\Gamma(k-k_2+1)\Gamma(b_{3}+b_{2}
      - a_4 - a_{3} + k_{2}) \Gamma(k_2+1)}\notag
\end{align}

As a corollary to Proposition~\ref{prop:17} we obtain the following
series expansions in powers of $1-z$ of ${}_nF_{n-1}(z)$ due to
B\"uhring~\cite{Buehring:1992ab}. They strongly depend on the
integrality of the number $c$ in~\eqref{eq:c}. 
\begin{cor}
  \label{cor:20}
  If $c \not \in \mZ$ then
  \[
    \begin{aligned}
      &
      \frac{\Gamma(a_1)\cdots\Gamma(a_n)}{\Gamma(b_1)\cdots\Gamma(b_{n-1})}
      \,{}_n
      F_{n-1}\left(\genfrac{}{}{0pt}{}{a_1,\dots,a_n}{b_1,\dots,
          b_{n-1}};z\right) \\
      & =\sum_{m=0}^\infty g_m(0)(1-z)^m + (1-z)^c \sum_{m=0}^\infty
      g_m(c)(1-z)^m
    \end{aligned}
  \]
  where
  \[
  \begin{aligned}
    g_m(\ell) &= (-1)^m
    \frac{\Gamma(a_1+\ell+m)\Gamma(a_2+\ell+m)\Gamma(c-2\ell-m)}{\Gamma(c+a_1)\Gamma(c+a_2)\Gamma(m+1)}\\
    &\phantom{=} \cdot \sum_{k=0}^\infty
    \frac{(c-\ell-m)_k}{(c+a_1)_k(c+a_2)_k}A^{(n)}(k)
  \end{aligned}
  \]
  The series in $g_m(\ell)$ terminates when $\ell =c$, while for $\ell=0$ we need the
conditions $\Re(a_j+m) > 0$, $j=3,\dots,n$, for convergence.
\end{cor}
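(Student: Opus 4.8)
The plan is to start from the double Mellin--Barnes integral in Proposition~\ref{prop:17} and read off the $(1-z)$-expansion by evaluating the two integrals with the residue theorem. Since we work in the regime $|1-z|<1$, the factor $(1-z)^u$ decays as $\Re u \to +\infty$, so I would close the $u$-contour to the right and collect the residues of the integrand at the poles lying to the right of the contour. In the $u$-variable these come in exactly two families: the poles of $\Gamma(-u)$ at $u=m$, $m\in\mN$, producing integer powers $(1-z)^m$, and the poles of $\Gamma(c+t-u)$ at $u=c+t+m$, producing powers $(1-z)^{c+t+m}$. The poles of $\Gamma(a_1+u)$ and $\Gamma(a_2+u)$ lie to the left and are not captured. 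The hypothesis $c\notin\mZ$ guarantees that the two families never coincide, so all relevant poles are simple and the two groups of residues will assemble, respectively, into the holomorphic part $\sum_m g_m(0)(1-z)^m$ and the part carrying the prefactor $(1-z)^c$.

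For each family I would then perform the remaining $t$-integral. Closing again to the right, the poles of $\widetilde A^{(n)}(t)$ at $t=k$, $k\in\mN$, are picked up, and by the very definition $A^{(n)}(k)=\Res_{u=k}\widetilde A^{(n)}(u)$ these residues are the numbers $A^{(n)}(k)$. For the first family the residue at $u=m$ leaves, after the $t$-integration, the ratio $\Gamma(c+k-m)/\bigl(\Gamma(c+a_1+k)\Gamma(c+a_2+k)\bigr)$; rewriting each $\Gamma(x+k)$ as $\Gamma(x)(x)_k$ converts this into $\frac{\Gamma(c-m)}{\Gamma(c+a_1)\Gamma(c+a_2)}\,\frac{(c-m)_k}{(c+a_1)_k(c+a_2)_k}$, and together with the contribution $(-1)^m/\Gamma(m+1)$ from the pole of $\Gamma(-u)$ and the factor $\Gamma(a_1+m)\Gamma(a_2+m)$ this is precisely $g_m(0)(1-z)^m$.

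For the second family the residue of $\Gamma(c+t-u)$ at $u=c+t+m$ produces a power $(1-z)^{c+t+m}=(1-z)^{c+m}(1-z)^t$, so the surviving $t$-integral again carries a factor $(1-z)^t$ and is evaluated by the same residues $A^{(n)}(k)$, yielding powers $(1-z)^{c+m+k}$. Here I would collect terms of a fixed total degree by setting $M=m+k$: for each $M$ the contributions with $0\le k\le M$ form a finite sum, and using $(-M)_k=(-1)^k M!/(M-k)!$ this finite sum is exactly the terminating $k$-sum appearing in $g_M(c)$. This explains why the series defining $g_m(\ell)$ terminates when $\ell=c$, while for $\ell=0$ it is genuinely infinite.

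The main obstacle is analytic rather than algebraic: I must justify closing both contours and, above all, interchanging the summation over $k$ coming from the $t$-residues with the $u$-residue evaluation. This requires the Stirling asymptotics of the gamma ratios and of $\widetilde A^{(n)}(t)$, whose structure is furnished by Proposition~\ref{prop:13}, to control the growth of $A^{(n)}(k)$, and it is precisely here that the stated conditions $\Re(a_j+m)>0$, $j=3,\dots,n$, enter to guarantee absolute convergence of $\sum_k \frac{(c-\ell-m)_k}{(c+a_1)_k(c+a_2)_k}A^{(n)}(k)$ in the case $\ell=0$; no such condition is needed for $\ell=c$, where the sum is finite. A secondary but real nuisance is the careful bookkeeping of signs and contour orientations, since each rightward closing contributes a global factor and the poles of $\Gamma(-u)$ and $\Gamma(c+t-u)$ carry their own signs; these must be reconciled with the $\e{\pm\pi i t}$ conventions in $\widetilde A^{(n)}$ in order to land on the signs displayed in $g_m(\ell)$.
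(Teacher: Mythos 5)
Your proposal is correct and follows essentially the same route as the paper: both start from the double Mellin--Barnes integral of Proposition~\ref{prop:17}, evaluate the two contours by residues (the two families of $u$-poles, at $u=m$ and at $u=c+t+m$, are exactly the two pieces the paper obtains by applying Lemma~\ref{lem:4}(1) to the inner $G_2$ with lower parameters $c+k$ and $0$), and then reorganize the resulting double sums into $g_m(0)$ and the terminating $g_m(c)$, with the interchange of the $m$- and $k$-sums justified by $\Re(a_j+m)>0$. The only difference is that you close the $u$-contour before the $t$-contour while the paper does the reverse, and you regroup the $(1-z)^c$-family by total degree $M=m+k$ where the paper shifts the summation index; both are immaterial bookkeeping variants of the same computation.
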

\begin{proof}
  We evaluate the integral on the right-hand side of Proposition~\ref{prop:17}. 
  \[
  \int \frac{\diff{}{t}}{2\pi i}\int \frac{\diff{}{u}}{2\pi i}
  \widetilde
  A^{(n)}(t)\frac{\Gamma(a_1+u)\Gamma(a_2+u)\Gamma(c+t-u)\Gamma(-u)}{\Gamma(c+a_2+t)\Gamma(c+a_1+t)}
  (1-z)^u 
  \]
  Since $c$ is not an integer, we first carry out the integral over $t$,
  then the integral over $u$. The integral over $t$ is simply obtained
  by closing the contour to the right
  \[
    \int \frac{\diff{}{t}}{2\pi i } \widetilde A^{(n)}(t)
    \frac{ \Gamma(c+t-u)}{\Gamma(c+a_2+t)\Gamma(c+a_1+t)}
    = \sum_{k\geq 0} A_n(k) \frac{
      \Gamma(c+k-u)}{\Gamma(c+a_2+k)\Gamma(c+a_1+k)}\\
  \]
  The remaining integral over $u$ is a $G_2$ with lower parameters
  $c+k$ and $0$ which, by assumption, are not in resonance. Hence we
  can apply Lemma~\ref{lem:4}(1) and the reflection formula for the
  Gamma function.
  \[
  \begin{aligned}
    &\int
    \frac{\diff{}{u}}{2\pi i
    }\Gamma(a_1+u)\Gamma(a_2+u)\Gamma(c+k-u)\Gamma(-u) (1-z)^u\\
    &= \Gamma(c+k)\Gamma(1-c-k)\int_{-i\infty}^{i\infty}
    \frac{\diff{}{u}}{2\pi
      i}\frac{\Gamma(a_1+u)\Gamma(a_2+u)\Gamma(-u)}{\Gamma(1-c-k+u)}
      (z-1)^u\\
    &\phantom{=}+ (1-z)^{c+k}\Gamma(-c-k)\Gamma(1+c+k)\\
    &\phantom{=} \cdot \int
    \frac{\diff{}{u}}{2\pi
      i}\frac{\Gamma(c+a_1+k+u)\Gamma(c+a_2+k+u)\Gamma(-u)}{\Gamma(1+c+k+u)}
    (z-1)^u\\
  \end{aligned}
   \]
  Shifting the integration variable in the second integral and closing
  the contour to the right in both integrals, this expression becomes
   \[
  \begin{aligned}
    &\Gamma(c+k)\Gamma(1-c-k) \sum_{m=0}^\infty \frac{\Gamma(a_1+m)\Gamma(a_2+m)}{\Gamma(1-c-k+m)\Gamma(m+1)}
      (1-z)^m\\
    &\phantom{=}+
    (1-z)^{c}\Gamma(-c-k)\Gamma(1+c+k) \sum_{m=0}^\infty \frac{\Gamma(c+a_1+m)\Gamma(c+a_2+m)}{\Gamma(1+c+m)\Gamma(m-k+1)}  (1-z)^m\\
  \end{aligned}
  \]
  Hence, combining everything we find
  \[
  \begin{aligned}
    & \frac{\Gamma(a_1)\cdots\Gamma(a_n)}{\Gamma(b_1)\cdots\Gamma(b_{n-1})}
    \,{}_n F_{n-1}\left(\genfrac{}{}{0pt}{}{a_1,\dots,a_n}{b_1,\dots,
        b_{n-1}};z\right) \\
    &= \sum_{k=0}^\infty \frac{A^{(n)}(k)\Gamma(c+k)\Gamma(1-c-k)}{\Gamma(c+a_2+k)\Gamma(c+a_1+k)}
    \sum_{m=0}^\infty \frac{\Gamma(a_1+m)\Gamma(a_2+m)}{\Gamma(1-c-k+m)\Gamma(m+1)}
      (1-z)^m\\
    &\phantom{=} +\frac{A^{(n)}(k)
    \Gamma(-c-k)\Gamma(1+c+k)}{\Gamma(c+a_2+k)\Gamma(c+a_1+k)}
  (1-z)^{c}\sum_{m=0}^\infty  
    \frac{\Gamma(c+a_1+m)\Gamma(c+a_2+m)}{\Gamma(1+c+m)\Gamma(m-k+1)}
    (1-z)^m
  \end{aligned}
  \]
  We massage this expression by using
  $\Gamma(\alpha+\ell)\Gamma(1-\alpha-\ell) =
  (-1)^\ell\Gamma(\alpha)\Gamma(1-\alpha)$ 
  for $\alpha\in\mC$ and $\ell\in\mZ$ three times: With $\alpha=c+k$,
  $\ell=m$ in the first summand, and with $\alpha=-c$, $\ell=m+k$, as
  well as $\alpha=1+m$, $\ell=-k$ in the second summand. The
  convergence of the sums over $m$ for 
  $\Re(a_j+m)>0$, $j=3,\dots,n$, allows us to interchange the sums. This yields
  \begin{align}
    & \sum_{k=0}^\infty \frac{A^{(n)}(k)}{\Gamma(c+a_2+k)\Gamma(c+a_1+k)}
    \sum_{m=0}^\infty
    (-1)^m\frac{\Gamma(a_1+m)\Gamma(a_2+m)\Gamma(c+k-m)}{\Gamma(m+1)} (1-z)^m\notag\\
    &\phantom{=} +\frac{A^{(n)}(k)}{\Gamma(c+a_2+k)\Gamma(c+a_1+k)}\notag\\
    &\phantom{=} \cdot 
  (1-z)^{c}\sum_{m=0}^\infty  (-1)^{m+k}
    \frac{\Gamma(c+a_1+m)\Gamma(c+a_2+m)\Gamma(-c-m)}{\Gamma(m-k+1)}
    (1-z)^m\notag\\
    &= \sum_{m=0}^\infty(-1)^m  \frac{\Gamma(a_1+m)\Gamma(a_2+m)}{\Gamma(m+1)}\sum_{k=0}^\infty \frac{A^{(n)}(k)\Gamma(c+k-m)}{\Gamma(c+a_2+k)\Gamma(c+a_1+k)}
      (1-z)^m\notag\\ 
    &\phantom{=} +
    (1-z)^{c}\sum_{m=0}^\infty (-1)^m \frac{\Gamma(-m-c)\Gamma(c+a_1+m)\Gamma(c+a_2+m)}{\Gamma(1+m)\Gamma(-m)} \notag\\ 
    &\phantom{=} \cdot \sum_{k=0}^\infty (-1)^k \frac{A^{(n)}(k)\Gamma(-m+k)}{\Gamma(c+a_2+k)\Gamma(c+a_1+k)}  (1-z)^{m}\notag
  \end{align}
  Rewriting this expression in terms of Pochhammer symbols yields the claim.
\end{proof}
For the applications in Section~\ref{sec:applications} we need the
case $n=3$ explicitly. 
In particular,
\begin{equation}
  \label{eq:g0_c}
  g_0(c) = \Gamma(-c).
\end{equation}
in agreement with N\o rlund's analysis~\cite{Norlund:1955ab}. Since
the indices of the hypergeometric differential equation at $z=1$ are
$0,1$ and $c=b_1+b_2-a_1-a_2-a_3$, it is sufficient to determine the
coefficients $g_{j}(0)$ for $j=0, 1$ besides
$g_0(c)$. With~\eqref{eq:A3} we find
\begin{equation}
  \begin{aligned}
   \label{eq:g0_0}  
    g_0(0) &= \frac{\Gamma(a_1)\Gamma(a_2)\Gamma(c)}{\Gamma(c+a_1)\Gamma(c+a_2)}\sum_{k=0}^\infty \frac{(c)_k(b_2-a_3)_k(b_1-a_3)_k}{(a_1+c)_k(a_2+c)_k k!} \\
    &= \frac{\Gamma(a_1)\Gamma(a_2)\Gamma(c)}{\Gamma(c+a_1)\Gamma(c+a_2)} \,{}_3F_2\left(\genfrac{}{}{0pt}{}{c, b_2-a_3, b_1-a_3}{a_1+c,a_2+c};1\right)
  \end{aligned}
\end{equation}
and
\begin{equation}
  \begin{aligned}
    \label{eq:g1_0}  
    g_1(0) &= -\frac{\Gamma(a_1+1)\Gamma(a_2+1)\Gamma(c-1)}{\Gamma(c+a_1)\Gamma(c+a_2)}\sum_{k=0}^\infty \frac{(c-1)_k(b_2-a_3)_k(b_1-a_3)_k}{(a_1+c)_k(a_2+c)_k k!} \\
    &= -\frac{\Gamma(a_1+1)\Gamma(a_2+1)\Gamma(c-1)}{\Gamma(c+a_1)\Gamma(c+a_2)} \,{}_3F_2\left(\genfrac{}{}{0pt}{}{c-1, b_2-a_3, b_1-a_3}{a_1+c,a_2+c};1\right)
  \end{aligned}
\end{equation}

If $c$ is a nonnegative integer, Proposition~\ref{prop:17} yields an entirely different series
expansion, again due to~\cite{Buehring:1992ab}.
\begin{cor}
  \label{cor:21}
  If $c=c_0 \in \mZ_{\geq 0}$ then
  \[
  \begin{aligned}
    &
    \frac{\Gamma(a_1)\cdots\Gamma(a_n)}{\Gamma(b_1)\cdots\Gamma(b_{n-1})}
    \,{}_n F_{n-1}\left(\genfrac{}{}{0pt}{}{a_1,\dots,a_n}{b_1,\dots,
        b_{n-1}};z\right) \\
    & =\sum_{m=0}^{{c_0}-1} l_m(1-z)^m + (1-z)^{c_0} \sum_{m=0}^\infty \left(
      w_m + q_m\log(1-z) \right)(1-z)^m
  \end{aligned}
  \]
  with
  \[
  \begin{aligned}
    l_m &= g_m(0)|_{c={c_0}} = (-1)^m
    \frac{\Gamma(a_1+m)\Gamma(a_2+m)\Gamma({c_0}-m)}{\Gamma({c_0}+a_1)\Gamma({c_0}+a_2)\Gamma(m+1)}\sum_{k=0}^\infty
    \frac{({c_0}-m)_k}{(a_1+{c_0})_k(a_2+{c_0})_k}A^{(n)}(k)\\
    q_m &= g_m(c)|_{c={c_0}} =
    (-1)^{{c_0}+1}\frac{(a_1+{c_0})_m(a_2+{c_0})_m}{\Gamma({c_0}+m+1)\Gamma(m+1)}\sum_{k=0}^m
    \frac{(-m)_k}{(a_1+{c_0})_k(a_2+{c_0})_k}A^{(n)}(k)\\
    w_m&=
    (-1)^{{c_0}}\frac{(a_1+{c_0})_m(a_2+{c_0})_m}{\Gamma({c_0}+m+1)\Gamma(m+1)}\sum_{k=0}^m
    \frac{(-m)_k}{(a_1+{c_0})_k(a_2+{c_0})_k}A^{(n)}(k)\notag\\
    &\phantom{=} \cdot \left( \psi(1+m-k) + \psi(1+{c_0}+m) -
      \psi(a_1+{c_0}+m)
      - \psi(a_2+{c_0}+m) \right)\\
    &\phantom{=} +
    (-1)^{{c_0}+m}\frac{(a_1+{c_0})_m(a_2+{c_0})_m}{\Gamma({c_0}+m+1)}\sum_{k=m+1}^\infty
    \frac{\Gamma(k-m)}{(a_1+{c_0})_k(a_2+{c_0})_k}A^{(n)}(k)\notag
  \end{aligned}
  \]
  The convergence of the series in $l_m$ requires the
conditions $\Re(a_j+m) > 0$, $j=3,\dots,n$, while whe convergence of
the series in $w_m$ requires the conditions $\Re(c_0+a_j+m) > 0$, $j=3,\dots,n$.
\end{cor}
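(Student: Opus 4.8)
The plan is to evaluate the double integral of Proposition~\ref{prop:17} by residues, following the proof of Corollary~\ref{cor:20} verbatim up to the one decisive point where the hypothesis $c=c_0\in\mZ_{\geq 0}$ changes the analysis. As there, I would first close the $t$-contour to the right, producing $\sum_{k\geq 0}A^{(n)}(k)$ together with the inner integral
\[
I_k(z)=\int\frac{\diff{}{u}}{2\pi i}\,\Gamma(a_1+u)\Gamma(a_2+u)\Gamma(c_0+k-u)\Gamma(-u)\,(1-z)^u .
\]
This $I_k$ is a $G_2$ whose lower parameters $0$ and $c_0+k$ now differ by the integer $c_0+k$, i.e.\ they are resonant, so the splitting via Lemma~\ref{lem:4}(1) used in Corollary~\ref{cor:20} is unavailable, and it is exactly this splitting that the logarithms replace.

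Next I would evaluate $I_k$ directly by summing the residues to the right, distinguishing two regimes according to the confluence of poles. The pole at $u=N$ is simple when only $\Gamma(-u)$ is singular, namely for $0\leq N<c_0+k$, and is a double pole when $\Gamma(c_0+k-u)$ is singular as well, namely for $N\geq c_0+k$. The double poles are where the logarithm is born: at the double pole $u=c_0+m$ contributing to the coefficient of $(1-z)^{c_0+m}$ (which requires $k\leq m$), writing $(1-z)^u=(1-z)^{c_0+m}\e{(u-c_0-m)\log(1-z)}$, the linear term of the Laurent expansion reproduces $\log(1-z)$, while the logarithmic derivatives of the two singular factors $\Gamma(c_0+k-u)$ and $\Gamma(-u)$ contribute $\psi(1+m-k)$ and $\psi(1+c_0+m)$ and those of the regular numerator factors $\Gamma(a_1+u)\Gamma(a_2+u)$ contribute $-\psi(a_1+c_0+m)-\psi(a_2+c_0+m)$; this is precisely the digamma combination appearing in $w_m$.

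Then I would reassemble the double sum power by power in $(1-z)$. The coefficient of $(1-z)^m$ for $m<c_0$ comes only from simple poles (the pole at $u=m$ is simple, since $c_0+k-m>0$ for every $k\geq 0$), and these assemble into $\sum_{m=0}^{c_0-1}l_m(1-z)^m$ with $l_m=g_m(0)|_{c=c_0}$, recovered by the same reflection-formula manipulation $\Gamma(\alpha+\ell)\Gamma(1-\alpha-\ell)=(-1)^\ell\Gamma(\alpha)\Gamma(1-\alpha)$ as in Corollary~\ref{cor:20}. The coefficient of $(1-z)^{c_0+m}$ for $m\geq 0$, on the other hand, receives a double pole at $u=c_0+m$ precisely for $k=0,\dots,m$ and a simple pole for $k\geq m+1$: the double poles carry the logarithm and produce the sum $\sum_{k=0}^{m}$, the simple poles produce the tail $\sum_{k=m+1}^{\infty}$ with its factor $\Gamma(k-m)$, and the $\log(1-z)$-coefficient $q_m$ collects only the double-pole residues. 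This reproduces exactly the split displayed in $w_m$ and $q_m$; the interchange of the $k$- and $m$-summations is justified by $\Re(a_j+m)>0$ for the $l_m$-series and $\Re(c_0+a_j+m)>0$ for the $w_m$-series, and rewriting in Pochhammer symbols yields the stated formulas.

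The hard part will be the bookkeeping of the double-pole residues, keeping every sign, index shift, and digamma term straight while merging the double-pole contributions with the reindexed simple-pole tails into the single closed form for $w_m$. A useful cross-check, and an alternative derivation, is the confluent limit $c\to c_0$ of Corollary~\ref{cor:20}: there $(1-z)^c=(1-z)^{c_0}\e{(c-c_0)\log(1-z)}$ while $g_m(c)$ acquires a simple pole in $c$ from $\Gamma(-c-m)$, whose residue becomes the logarithmic coefficient $q_m$; the matching pole of $g_m(0)$ for $m\geq c_0$ cancels the leftover singularity, and the two finite parts, now carrying $\psi=\Gamma'/\Gamma$ from differentiating the Gamma factors in $c$, combine into $w_m$. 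This explains transparently the notation $l_m=g_m(0)|_{c=c_0}$ and $q_m=g_m(c)|_{c=c_0}$.
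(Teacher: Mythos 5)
Your proposal is correct and follows essentially the same route as the paper: evaluate the $t$-integral first to produce $\sum_k A^{(n)}(k)$, then evaluate the now-resonant $G_2$ in $u$ by closing the contour to the right and picking up the double poles (which the paper does by quoting the known residue expansion from Erd\'elyi/N\o rlund rather than recomputing it), and finally reassemble by interchanging the $k$- and $m$-sums under the stated convergence conditions, splitting the simple-pole finite sum at $m=c_0$ and reindexing its tail into the $\sum_{k=m+1}^\infty\Gamma(k-m)$ piece of $w_m$. Your pole bookkeeping (simple for $k\ge m+1$, double for $k\le m$), the attribution of the digamma terms, and the confluent-limit cross-check all match the paper's computation.
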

\begin{proof}
  We begin as in the proof of Corollary~\ref{cor:20} and evaluate the
  $t$ integral in Proposition~\ref{prop:17}. The remaining integral
  over $u$ is a $G_2$ with lower 
  parameters $c + k$ and $0$ which, by assumption, are now in
  resonance. Instead of applying Lemma~\ref{lem:4}(1) we have to
  evaluate the residue integral explicitly by closing the contour to the right
  and picking up the double poles. This is a lengthy but
  straightforward computation whose result can be found
  e.g. in~\cite {Erdelyi:1953ab} or in~\cite{Norlund:1963ab}. It reads
  in our case
  \[
  \begin{aligned}
    &(-1)^{c_0+k}\int
    \frac{\diff{}{u}}{2\pi i
    }\Gamma(a_1+u)\Gamma(a_2+u)\Gamma(c_0+k-u)\Gamma(-u) (1-z)^u\\
    & =  - \log
    (1-z) (1-z)^{c_0+k}\sum_{m\geq 0}
     \frac{\Gamma(a_1+c_0+k+m)\Gamma(a_2+c_0+k+m)}{\Gamma(1+c_0+k+m)\Gamma(m+1)}(1-z)^m\\
     &\phantom{=} - (1-z)^{c_0+k} \sum_{m=0}^\infty
     \frac{\Gamma(a_1+c_0+k+m)\Gamma(a_2+c_0+k+m)}{\Gamma(1+c_0+k+m)\Gamma(m+1)}\\
     &\phantom{=}
    \cdot \left(\psi(a_1+c_0+k+m)+\psi(a_2+c_0+k+m)-\psi(1+c_0+k+m)-\psi(1+m)\right) (1-z)^m\\
    &\phantom{=} + (-1)^{c_0+k}\Gamma(c_0+k) \Gamma(1-c_0-k)\sum_{m=0}^{c_0+k-1}  \frac{\Gamma(a_1+m)\Gamma(a_2+m)}{\Gamma(1-c_0-k+m) \Gamma(m+1)}(1-z)^m
  \end{aligned}
  \]
  where $\psi(z) = (\log \Gamma(z) )'$ is the Digamma function. Hence,
  combining everything we find
  \begin{align*}
    & \frac{\Gamma(a_1)\cdots\Gamma(a_n)}{\Gamma(b_1)\cdots\Gamma(b_{n-1})}
    \,{}_n F_{n-1}\left(\genfrac{}{}{0pt}{}{a_1,\dots,a_n}{b_1,\dots,
        b_{n-1}};z\right) \notag\\
    &= \sum_{k=0}^\infty
    \frac{(-1)^{c_0+k}A^{(n)}(k)}{\Gamma(c_0+a_2+k)\Gamma(c_0+a_1+k)}\left(  \vphantom{\sum_{m=0}^\infty\frac{\Gamma(k)}{\Gamma(k)}}- \log
    (1-z) (1-z)^{c_0+k}\right.\notag\\
     &\phantom{=} \cdot \sum_{m=0}^\infty
     \frac{\Gamma(a_1+c_0+k+m)\Gamma(a_2+c_0+k+m)}{\Gamma(1+c_0+k+m)\Gamma(m+1)}(1-z)^m\notag\\
     &\phantom{=} - (1-z)^{c_0+k} \sum_{m= 0}^\infty
     \frac{\Gamma(a_1+c_0+k+m)\Gamma(a_2+c_0+k+m)}{\Gamma(1+c_0+k+m)\Gamma(m+1)}\notag\\
     &\phantom{=}
    \cdot \left(\psi(a_1+c_0+k+m)+\psi(a_2+c_0+k+m)-\psi(1+c_0+k+m)-\psi(1+m)\right) (1-z)^m\notag\\
    &\phantom{=} \left.+ (-1)^{c_0+k}\Gamma(c_0+k)\Gamma(1-c_0-k)\sum_{m=0}^{c_0+k-1}
      \frac{\Gamma(a_1+m)\Gamma(a_2+m)}{\Gamma(1-c_0-k+m)
      \Gamma(m+1)}(1-z)^m\right)\notag\\
  \end{align*}
  For the first two summands in the parenthesis a standard argument in
  the theory of hypergeometric functions shows that the
  infinite sums over $m$ converge for $\Re(a_j+m)>0$, $j=3,\dots,n$,
  hence we can interchange the sums 
  over $m$ and $k$. In the third summand we split the sum over $m$ into a
  sum from $0$ to $c_0-1$ and a sum over the remaining values of $m$. In
  the latter we shift the summation index. Then this  
  expression on the right hand side becomes 
  \begin{align*}
    &(1-z)^{c_0} \log(1-z) \sum_{m=0}^\infty (-1)^{c_0+1} 
      \frac{\Gamma(a_1+c_0+m)\Gamma(a_2+c_0+m)}{\Gamma(1+c_0+m)}\notag\\
    & \cdot
      \sum_{k=0}^{m} \frac{ (-1)^k A^{(n)}(k)
      }{\Gamma(a_1+c_0+k)\Gamma(a_2+c_0+k)\Gamma(1+m-k)}
      (1-z)^m\notag\\
    & +  (1-z)^{c_0}\sum_{m=0}^\infty (-1)^{c_0+1} 
      \frac{\Gamma(a_1+c_0+m)\Gamma(a_2+c_0+m)}{\Gamma(1+c_0+m)}\notag\\
    & \cdot
      \sum_{k=0}^{m} \frac{ (-1)^k A^{(n)}(k)
      }{\Gamma(a_1+c_0+k)\Gamma(a_2+c_0+k)\Gamma(1+m-k)}\notag\\
     & \cdot
       \left(\psi(a_1+c_0+m)+\psi(a_2+c_0+m)-\psi(1+c_0+m)-\psi(1+m-k)\right)  
      (1-z)^m\notag\\
     &+\sum_{k=0}^\infty
       \frac{A^{(n)}(k)\Gamma(c_0+k) \Gamma(1-c_0-k)}{\Gamma(a_1+c_0+k)\Gamma(a_2+c_0+k)}\sum_{m=0}^{c_0-1} \frac{\Gamma(a_1+m)\Gamma(a_2+m)}{\Gamma(1-c_0-k+m)\Gamma(m+1)}(1-z)^m\notag\\
     &+(1-z)^{c_0}\sum_{k=0}^\infty
       \frac{A^{(n)}(k)\Gamma(c_0+k)\Gamma(1-c_0-k)}{\Gamma(a_1+c_0+k)\Gamma(a_2+c_0+k)}\sum_{m=0}^{k}
       \frac{\Gamma(a_1+c_0+m)\Gamma(a_2+c_0+m)}{\Gamma(1-k+m)\Gamma(1+c_0+m)}(1-z)^m\notag\\ 
   \end{align*}
  Again, we massage this expression by using
  $\Gamma(\alpha+\ell)\Gamma(1-\alpha-\ell) =
  (-1)^\ell\Gamma(\alpha)\Gamma(1-\alpha)$ 
  for $\alpha\in\mC$ and $\ell\in\mZ$ four times: With $\alpha=1+m$,
  $\ell=-k$ in the first and second summand, with $\alpha=c_0+k$,
  $\ell=m$ in the third summand, and with $\alpha=c_0+k$,
  $\ell=-m-c_0$. Furthermore, if we require that $\Re(c_0+a_j+m)>0$,
  $j=3,\dots,n$, we can interchange the sums over $m$ and $k$
  in the fourth summand. This yields
   \begin{align*}
    &(1-z)^{c_0} \log(1-z) \sum_{m=0}^\infty (-1)^{c_0+1} 
      \frac{\Gamma(a_1+c_0+m)\Gamma(a_2+c_0+m)}{\Gamma(1+c_0+m) \Gamma(m+1)}\\
    & \cdot
      \sum_{k=0}^{m} \frac{ A^{(n)}(k) \Gamma(k-m)
      }{\Gamma(a_1+c_0+k)\Gamma(a_2+c_0+k)\Gamma(-m)} (1-z)^m\notag\\
    & +  (1-z)^{c_0}\sum_{m=0}^\infty (-1)^{c_0+1} 
      \frac{\Gamma(a_1+c_0+m)\Gamma(a_2+c_0+m)}{\Gamma(1+c_0+m)\Gamma(m+1)}\notag\\ 
     & \cdot
      \sum_{k=0}^{m} \frac{
      A^{(n)}(k)\Gamma(k-m)}{\Gamma(a_1+c_0+k)\Gamma(a_2+c_0+k)\Gamma(-m)}\notag\\ 
     & \cdot
       \left(\psi(a_1+c_0+m)+\psi(a_2+c_0+m)-\psi(1+c_0+m)-\psi(1+m-k)\right)  
      (1-z)^m\notag\\
     &+\sum_{m=0}^{c_0-1}(-1)^m\frac{\Gamma(a_1+m)\Gamma(a_2+m)}{\Gamma(m+1)}\sum_{k=0}^\infty
       \frac{A^{(n)}(k)\Gamma(c_0+k-m)}{\Gamma(a_1+c_0+k)\Gamma(a_2+c_0+k)}(1-z)^m
       \notag\\    
     &+(1-z)^{c_0}\sum_{m=0}^\infty(-1)^{m+c_0}\frac{\Gamma(a_1+c_0+m)\Gamma(a_2+c_0+m)}{\Gamma(1+c_0+m)}
       \notag\\     
     &\cdot \sum_{k=m+1}^\infty
       \frac{A^{(n)}(k)\Gamma(k-m)}{\Gamma(a_1+c_0+k)\Gamma(a_2+c_0+k)}(1-z)^m\notag 
  \end{align*}
  Rewriting this expression in terms of Pochhammer symbols yields the claim.
\end{proof}
For the applications in Section~\ref{sec:applications} we need the case
$n=4$ and $c_0=1$ explicitly.  The four coefficients to be determined
are
\begin{align}
  \label{eq:l0}
  l_0 &= 
    \frac{\Gamma(a_1)\Gamma(a_2)}{\Gamma(1+a_1)\Gamma(1+a_2)}\sum_{k=0}^\infty
    \frac{\Gamma(1+k)}{(a_1+1)_k(a_2+1)_k}A^{(4)}(k)\\
  \label{eq:w0}
  w_0 &= A^{(4)}(0)\\
  \label{eq:q0}
  q_0 &= A^{(4)}(0) \left( \psi(a_1+1) +\psi(a_2+1) -\psi(1) - \psi(2) \right)\\
    &\phantom{=} -\sum_{k=1}^\infty
    \frac{\Gamma(k)}{(a_1+{1})_k(a_2+{1})_k}A^{(4)}(k)\notag\\
  \label{eq:q1}
  q_1 &= \frac{(a_1+{1})(a_2+{1})}{2} \left( \sum_{k=0}^1
    \frac{(-1)_k}{(a_1+{1})_k(a_2+{1})_k}A^{(4)}(k) \right. \\
    &\phantom{=} \cdot \left( \psi(a_1+2) + \psi(a_2+2) - \psi(2-k) - \psi(3) \right) \notag\\
    &\phantom{=} \left. +\sum_{k=2}^\infty
    \frac{\Gamma(k-1)}{(a_1+{1})_k(a_2+{1})_k}A^{(4)}(k) \right) \notag
\end{align}
with
\begin{align}
  A^{(4)}(k) &=\frac{\Gamma(b_{3}+b_{2} - a_4 - a_{3} + k)
    \Gamma(b_1-a_3+k)}{\Gamma(b_{3}+b_{2} - a_4 - a_{3})
    \Gamma(b_1-a_3) \Gamma(1+k)} \\
  &\phantom{=} \cdot \,{}_3F_2
  \left(\genfrac{}{}{0pt}{}{b_3-a_4,b_2-a_4,-k}{b_3+b_2-a_4-a_3,
      1+a_3-b_1-k};1\right)\notag
\end{align}

\begin{rem}
  \label{rem:33}
  The analogous result for $c$ a nonpositive integer can be derived in
  a parallel manner. The evaluation of the $u$ integral now yields a
  different result, see again~\cite {Erdelyi:1953ab}
  or~\cite{Norlund:1963ab}. The final result for the analytic
  continuation of ${}_nF_{n-1}$ in this case can be found
  in~\cite{Buehring:1992ab}. 
\end{rem}

As a corollary to Proposition~\ref{thm:12} we obtain the following
series expansion in powers of $1-z$ of $G_p(z)$. We start with an easy lemma:
\begin{lem}
 \label{lem:11} For any $1 < p \leq q \leq n$, if $|z-1| < 1$, then
  \[
  \begin{aligned}
    &G_p
    \left(\genfrac{}{}{0pt}{0}{\alpha_{1},\dots,\alpha_n}{\gamma_{1},\dots,\gamma_n};z\right) = z^{\gamma_q} \sum_{m=0}^\infty \frac{1}{m!} G_p
    \left(\genfrac{}{}{0pt}{0}{\alpha_{1},\dots,\alpha_n}{\gamma_{1},\dots,\gamma_{q-1},\gamma_q+m,\gamma_{q+1},\dots,\gamma_n};1\right) (1-z)^m
  \end{aligned}
  \]
\end{lem}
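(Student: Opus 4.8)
The starting point is the fact, already recorded above, that for $p>1$ the Meijer $G$-integral defines a function $G_p(z)$ that is holomorphic and single-valued at $z=1$. Since the only other finite singular point of the equation lies at $z=0$, its Taylor expansion about $z=1$ automatically converges on the whole disc $|z-1|<1$, so the content of the lemma is not convergence but the explicit identification of the coefficients. The plan is to read the expansion off directly from the Mellin--Barnes representation
\[
  G_p(z) = \int \frac{\diff{}{t}}{2\pi i}\, \e{i\pi(p-2)t}\, z^{t} \prod_{j=1}^n \frac{\Gamma(\alpha_j+t)}{\Gamma(1-\gamma_j+t)} \prod_{h=1}^p \Gamma(\gamma_h-t)\Gamma(1-\gamma_h+t),
\]
by splitting $z^{t}=z^{\gamma_q}\,z^{t-\gamma_q}$ and expanding the second factor about $z=1$.

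The elementary expansion I would use is
\[
  z^{t-\gamma_q} = \bigl(1-(1-z)\bigr)^{t-\gamma_q} = \sum_{m=0}^\infty \frac{(\gamma_q-t)_m}{m!}\,(1-z)^m, \qquad |1-z|<1,
\]
with $(\gamma_q-t)_m = \Gamma(\gamma_q-t+m)/\Gamma(\gamma_q-t)$. Substituting this into the integrand and pulling the factor $z^{\gamma_q}$, which is holomorphic and equal to $1$ at $z=1$, out in front, one obtains \emph{formally}
\[
  G_p(z) = z^{\gamma_q} \sum_{m=0}^\infty \frac{(1-z)^m}{m!} \int \frac{\diff{}{t}}{2\pi i}\,(\gamma_q-t)_m\, \e{i\pi(p-2)t} \prod_{j=1}^n \frac{\Gamma(\alpha_j+t)}{\Gamma(1-\gamma_j+t)} \prod_{h=1}^p \Gamma(\gamma_h-t)\Gamma(1-\gamma_h+t).
\]

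The decisive step, and the one I expect to be the main obstacle, is the interchange of the summation over $m$ with the contour integral. I would justify it by absolute convergence. On a vertical contour $t=c+i\tau$ the classical Stirling asymptotics give $|\Gamma(a\pm t)|\sim\sqrt{2\pi}\,|\tau|^{\Re a-1/2}\e{-\pi|\tau|/2}$; cancelling the factors $\Gamma(1-\gamma_h+t)$ in the numerator against the corresponding denominators, one finds that the $t$-integrand decays like $\e{-2\pi|\tau|}$ as $|\tau|\to\infty$ for every $p\geq 2$ (the exponential $\e{i\pi(p-2)t}$ only alters the rate, which remains strictly negative). The binomial series, on the other hand, contributes growth no worse than $\e{|1-z|\,|\tau|}$. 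Since $|1-z|<1<2\pi$, the resulting double sum-integral is absolutely convergent, so Fubini's theorem legitimises the interchange; this simultaneously re-proves convergence of the power series for $|1-z|<1$.

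It then remains to identify each coefficient integral with a shifted $G_p$. The parameter $\gamma_q$ enters the integrand through a single Gamma factor, and the functional equation of the Gamma function absorbs the Pochhammer prefactor $(\gamma_q-t)_m$ into that factor as the shift $\gamma_q\mapsto\gamma_q+m$: the representative computation is
\[
  (\gamma_q-t)_m\,\Gamma(\gamma_q-t)=\Gamma(\gamma_q-t+m)=\Gamma\bigl((\gamma_q+m)-t\bigr),
\]
with no other factor affected. Finally one checks that shifting $\gamma_q$ by the nonnegative integer $m$ moves no pole of the integrand across the contour, so the same contour represents the shifted integral; hence
\[
  \int \frac{\diff{}{t}}{2\pi i}\,(\gamma_q-t)_m\, \e{i\pi(p-2)t} \prod_{j=1}^n \frac{\Gamma(\alpha_j+t)}{\Gamma(1-\gamma_j+t)} \prod_{h=1}^p \Gamma(\gamma_h-t)\Gamma(1-\gamma_h+t) = G_p\left(\genfrac{}{}{0pt}{0}{\alpha_{1},\dots,\alpha_n}{\gamma_{1},\dots,\gamma_{q-1},\gamma_q+m,\gamma_{q+1},\dots,\gamma_n};1\right).
\]
Substituting this back into the interchanged series yields the claimed expansion, uniformly for all $p>1$.
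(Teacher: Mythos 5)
Your proof is correct and follows essentially the same route as the paper's: both expand $z^{t-\gamma_q}=\sum_{m\geq 0}\frac{(\gamma_q-t)_m}{m!}(1-z)^m$ under the Mellin--Barnes integral and absorb the Pochhammer factor into a shift $\gamma_q\mapsto\gamma_q+m$, the only cosmetic difference being that the paper packages the expansion as Taylor's theorem for $z^{-\gamma_q}G_p(z)$ with termwise differentiation under the integral, while you use the binomial series plus Fubini. Your added decay estimate justifying the interchange (which the paper omits) is welcome, though your bound $\e{|1-z|\,|\tau|}$ on the binomial tail is only accurate for $|1-z|$ small; this is harmless since, as you note at the outset, identifying the coefficients on any small disc suffices, the full disc $|z-1|<1$ then following from holomorphy of $G_p$ at $z=1$.
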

\begin{proof}
  This almost immediately follows from the definition of $G_p(z)$ in
  terms of the Meijer G-function in Section~\ref{sec:meijer-g-functions}. In
  fact,
  \begin{align*}
    &G_p
    \left(\genfrac{}{}{0pt}{0}{\alpha_{1},\dots,\alpha_n}{\gamma_{1},\dots,\gamma_n};z\right)\notag \\
    &= z^{\gamma_q} \sum_{m=0}^\infty \frac{1}{m!}
    \left.\frac{\diff{m}{\left(z^{-\gamma_{q}}G_p(z)\right)}}{\diff{}{z^m}}\right|_{z=1}
    (z-1)^m\notag \\
    &= z^{\gamma_q}\sum_{m=0}^\infty \frac{1}{m!} 
      \int \frac{\diff{}{t}}{2\pi i}
      \,\e{i\pi t(p-2)}
    \prod_{j=1}^n
  \frac{\Gamma(\alpha_j+t)}{\Gamma(1-\gamma_j+t)} \prod_{h=1}^p
  \Gamma(\gamma_h-t)\Gamma(1-\gamma_h+t)\notag \\
    &\phantom{=} \phantom{ XX } \left. \cdot \frac{\diff{m}{(z^{t-\gamma_q})}}{\diff{}{z}^m} \right|_{z=1} (z-1)^m\notag \\
    &= z^{\gamma_q}\sum_{m=0}^\infty \frac{1}{m!} 
      \int \frac{\diff{}{t}}{2\pi i}
      \,\e{i\pi t(p-2)}
    \prod_{j=1}^n
  \frac{\Gamma(\alpha_j+t)}{\Gamma(1-\gamma_j+t)} \prod_{h=1}^p
  \Gamma(\gamma_h-t)\Gamma(1-\gamma_h+t)\notag \\
    &\phantom{=} \phantom{ XX } \cdot 
      \frac{\Gamma(\gamma_q-t+m)}{\Gamma(\gamma_q-t)} (1-z)^m\notag
\end{align*}
  For $p=2$ this is due to~\cite{Norlund:1955ab}.
\end{proof}

\begin{thm}
  \label{thm:22}
    For any $2 < p \leq q \leq n$, if $|z-1| < 1$, $\Re \beta_n > \Re\beta_p$,
 $\Re(\alpha_s+\gamma_j)>0$, $j=1\dots,p$, $s=p+1,\dots,n$, $\alpha_{p}+\gamma_p,
    \alpha_s+\gamma_{s+1} \not \in \mZ_{\leq 0}$, $s=2,\dots,p-1$ then
  \[
    \begin{aligned}
    G_p(z) &= \sum_{m=0}^\infty \Gamma(\alpha_1+\gamma_2) \int \frac{\diff{}{v}}{2\pi i} \e{-i\pi v} \Gamma(\alpha_1+\gamma_1+v) \Gamma(-v) \\
   &\phantom{=} \cdot \int \frac{\diff{}{s}}{2 \pi i} \frac{B_{p,m}(s)}{\Gamma(m+1)} \frac{ \Gamma(\gamma_2-s) \Gamma(\gamma_1+v-s) }{\Gamma(\alpha_1+\gamma_1+\gamma_2+v-s)}\\
   &\phantom{=} \cdot \int
    \frac{\diff{}{u}}{2\pi i} \e{-i\pi u} 
    \frac{\Gamma(-v+u)\Gamma(-u)}{\Gamma(-v)} \prod_{s=p+1}^n
    \frac{\Gamma(\alpha_s+\gamma_1+u)}{\Gamma(1-\gamma_s+\gamma_1+u)} (1-z)^m
  \end{aligned}
  \]
  where $B_{p,m}(s) = B_p(s)|_{\gamma_p \to \gamma_p+m}$ with $B_p(s)$
  as in Proposition~\ref{prop:10}. If $p=2$
  then
  \[
  \begin{aligned}
    G_2(z) &= \sum_{m=0}^\infty
    \frac{\Gamma(\alpha_1+\gamma_2+m)\Gamma(\alpha_2+\gamma_2+m)}{\Gamma(m+1)}
    \\
   &\phantom{=} \cdot \int \frac{\diff{}{v}}{2\pi i} \e{-i\pi v} \frac{\Gamma(\alpha_1+\gamma_1+v) \Gamma(\alpha_2+\gamma_1+v)  \Gamma(-v)}{\Gamma(\alpha_1+\alpha_2+\gamma_1+\gamma_2+m+v)} \\
   &\phantom{=} \cdot \int
    \frac{\diff{}{u}}{2\pi i} \e{-i\pi u} 
    \frac{\Gamma(-v+u)\Gamma(-u)}{\Gamma(-v)} \prod_{s=3}^n
    \frac{\Gamma(\alpha_s+\gamma_1+u)}{\Gamma(1-\gamma_s+\gamma_1+u)} (1-z)^m
  \end{aligned}
  \]
\end{thm}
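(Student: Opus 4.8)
The plan is to produce the series purely by combining two results already in hand: the elementary Taylor expansion of Lemma~\ref{lem:11}, which supplies the powers $(1-z)^m$ together with an integer shift of one lower exponent, and the integral representation of Proposition~\ref{thm:12}, which I would specialise to $z=1$ to read off each Taylor coefficient in closed form. Concretely, I would invoke Lemma~\ref{lem:11} with the shifted exponent taken to be $\gamma_p$, so that the shift acts directly on the factor $\Gamma(\gamma_p-t)$ obtained by combining $f_n(t)$ with the special product in the integrand; this gives $G_p(z)=z^{\gamma_p}\sum_{m\ge 0}\frac{1}{\Gamma(m+1)}\,G_p|_{\gamma_p\to\gamma_p+m}(1)\,(1-z)^m$, the series converging for $|z-1|<1$ because the nearest other singularity of $G_p$ sits at $z=0$.

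The algebraic heart of the matter is the observation that, in the formula of Proposition~\ref{thm:12} for $p>2$, the exponent $\gamma_p$ occurs \emph{only} inside $B_p(s)$. Indeed, the prefactor and the whole $v$-integrand involve just $\gamma_1,\gamma_2$; in the $s$-integrand everything except $B_p(s)$ again involves only $\gamma_1,\gamma_2$; and the product in the $u$-integral runs over $s=p+1,\dots,n$, hence contains $\gamma_{p+1},\dots,\gamma_n$ but never $\gamma_p$. Consequently the substitution $\gamma_p\to\gamma_p+m$ dictated by Lemma~\ref{lem:11} replaces $B_p(s)$ by $B_{p,m}(s)=B_p(s)|_{\gamma_p\to\gamma_p+m}$ and changes nothing else, while setting $z=1$ merely deletes the factors $z^{\gamma_1}$ and $z^u$. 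Assembling these coefficients with the weight $1/\Gamma(m+1)$ and the monomial $(1-z)^m$ reproduces the displayed series. For $p=2$ the block $B_p(s)$ is absent and $\gamma_2$ instead resides in the explicit factors $\Gamma(\alpha_1+\gamma_2)\Gamma(\alpha_2+\gamma_2)$ and in the denominator $\Gamma(\alpha_1+\alpha_2+\gamma_1+\gamma_2+v)$; the same shift $\gamma_2\to\gamma_2+m$ then yields the second formula at once.

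I expect the real work to be analytic rather than combinatorial, and to concentrate in two places. First, I would check that the hypotheses of Proposition~\ref{thm:12} are stable under $\gamma_p\to\gamma_p+m$ for every $m\ge 0$: since $m\ge 0$, the conditions $\Re(\alpha_s+\gamma_p)>0$ only improve, $\Re\beta_p$ merely decreases so that $\Re\beta_n>\Re\beta_p$ persists, and $\alpha_p+\gamma_p+m\notin\mZ_{\le 0}$ follows from the stated non-integrality; thus the proposition applies verbatim to the shifted parameters. Second, and this is the main obstacle, one must ensure that the $z=1$ specialisation is legitimate, i.e. that the innermost $u$-integral still converges there. By Remark~\ref{rem:24} that integral is a ${}_{n-p+1}F_{n-p}$ evaluated at the argument $z$, whose parameter excess works out to $\beta_n-\beta_p+v$, so its value at $z=1$ exists precisely when this excess has positive real part, which is exactly what $\Re\beta_n>\Re\beta_p$ secures along the chosen $v$-contour. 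Granting these two points, no interchange of summation and integration is required --- the sum over $m$ already stands outside all contour integrals in the claimed identity --- so the two ingredients splice together directly to give the theorem.
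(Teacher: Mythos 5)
Your argument is correct and coincides with the paper's own proof: both obtain the series by combining the Taylor expansion of Lemma~\ref{lem:11} (applied with the shifted exponent $\gamma_p$) with the integral representation of Proposition~\ref{thm:12}, using exactly your observation that the shift $\gamma_p\to\gamma_p+m$ touches only $B_p(s)$ when $p>2$ and only the explicit $\gamma_2$-dependent Gamma factors when $p=2$. The only difference is cosmetic and concerns the convergence discussion, where the paper justifies the evaluation at $z=1$ by an asymptotic expansion of the inner ${}_{n-p+1}F_{n-p}$ in its large parameter $-v$ rather than by the parameter-excess criterion you invoke.
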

\begin{proof}
  This follows from Proposition~\ref{thm:12} and
  Lemma~\ref{lem:11}. The $s$ integral can be expressed in terms
  multiple integrals of hypergeometric functions. By
  Remark~\ref{rem:24} the $u$ 
  integral is the evaluation at $z=1$ of a hypergeometric function
  which requires a convergence condition that follows from the
  following asymptotic expansion~\cite{Norlund:1955ab}.
  \[
  {}_{n+1} F_{n}
    \left(\genfrac{}{}{0pt}{}{a_1,\dots,a_n,-x}{b_1,\dots,b_n};z\right)
    \sim \sum_{i=1}^n C_i x^{-a_i} (\log x)^{r_i} ,
  \]
  for some constants $C_i$, nonnegative integers $r_i$ and $|z-1| < 1$.
\end{proof}

\begin{rem}
  \label{rem:31}
  By using the residue theorem, we can give an integral representation
  for $G_p(z)$ as a function of $1-z$.
  \[
    \begin{aligned}
    G_p(z) &= \Gamma(\alpha_1+\gamma_2)  \int \frac{\diff{}{t}}{2\pi
      i} \e{i\pi(t+1)} \Gamma(-t) \int \frac{\diff{}{v}}{2\pi i} \e{-i\pi v} \Gamma(\alpha_1+\gamma_1+v) \Gamma(-v) \\
   &\phantom{=} \cdot \int \frac{\diff{}{s}}{2 \pi i} B_{p}(s,t) \frac{ \Gamma(\gamma_2-s) \Gamma(\gamma_1+v-s) }{\Gamma(\alpha_1+\gamma_1+\gamma_2+v-s)}\\
   &\phantom{=} \cdot \int
    \frac{\diff{}{u}}{2\pi i} \e{-i\pi u} 
    \frac{\Gamma(-v+u)\Gamma(-u)}{\Gamma(-v)} \prod_{s=p+1}^n
    \frac{\Gamma(\alpha_s+\gamma_1+u)}{\Gamma(1-\gamma_s+\gamma_1+u)} (1-z)^t
  \end{aligned}
  \]
  where $B_{p}(s,t) = B_p(s)|_{\gamma_p \to \gamma_p+t}$.
\end{rem}

\subsection{Examples}
\label{sec:examples}

For our applications in Section~\ref{sec:applications} we need a few cases explicitly.
\begin{expl}
  \label{expl:27}
  For $n=3, p=2$ we have
  \begin{equation}
    \label{eq:G2_n=3}
    G_2(z) = z^{\gamma_1} \sum_{m=0}^\infty h_m\, (1-z)^m
  \end{equation}
  with
  \[
  \begin{aligned}
    h_m&=
    \frac{\Gamma(\alpha_3+\gamma_1)\Gamma(\alpha_1+\gamma_2+m)\Gamma(\alpha_2+\gamma_2+m)
    }{\Gamma(1-\gamma_3+\gamma_1)\Gamma(m+1)}
    \\
    &\phantom{=} \cdot \int \frac{\diff{}{v}}{2\pi i} \e{-i\pi v}
    \frac{\Gamma(\alpha_1+\gamma_1+v) \Gamma(\alpha_2+\gamma_1+v)
      \Gamma(-v)}{\Gamma(\alpha_1+\alpha_2+\gamma_1+\gamma_2+m+v)}
    {}_{2}F_{1}\left(\genfrac{}{}{0pt}{0}{-v,\alpha_{3}+\gamma_1}{1-\gamma_{3}+\gamma_1};1\right)
  \end{aligned}
  \]
  By applying Gauss' formula~\eqref{eq:Gauss} this becomes
  \begin{align}
    h_m&= 
         \frac{\Gamma(\alpha_3+\gamma_1)\Gamma(\alpha_1+\gamma_2+m)\Gamma(\alpha_2+\gamma_2+m)
         }{\Gamma(1-\alpha_3-\gamma_3)\Gamma(m+1)} \notag\\
       &\phantom{=} \cdot \int \frac{\diff{}{v}}{2\pi i} \e{-i\pi v}
         \frac{\Gamma(\alpha_1+\gamma_1+v) \Gamma(\alpha_2+\gamma_1+v)
         \Gamma(1-\alpha_{3}-\gamma_{3}+v)
         \Gamma(-v)}{\Gamma(\alpha_1+\alpha_2+\gamma_1+\gamma_2+m+v)
         \Gamma(1+\gamma_1-\gamma_{3}+v)}\notag\\
    \label{eq:hm_n=3}
       &=  \frac{\Gamma(\alpha_1+\gamma_1)
         \Gamma(\alpha_2+\gamma_1)\Gamma(\alpha_3+\gamma_1)\Gamma(\alpha_1+\gamma_2+m)\Gamma(\alpha_2+\gamma_2+m)
         }{\Gamma(\alpha_1+\alpha_2+\gamma_1+\gamma_2+m)\Gamma(1+\gamma_1-\gamma_{3})\Gamma(m+1)}\\
       & \phantom{=} \cdot
         {}_{3}F_{2}\left(\genfrac{}{}{0pt}{0}{\alpha_1+\gamma_1,\alpha_2+\gamma_1,1-\alpha_{3}-\gamma_{3}}{\alpha_1+\alpha_2+\gamma_1+\gamma_2+m,1+\gamma_1-\gamma_{3}};1\right)
         \notag
  \end{align}
\end{expl}

  \begin{expl}
  \label{expl:28}
  For $n=4, p=2$ we have
  \begin{equation}
    \label{eq:G2_n=4}
    G_2(z) = z^{\gamma_1} \sum_{m=0}^\infty h_m\, (1-z)^m
  \end{equation}
  with
  \[
  \begin{aligned}
    h_m&= \frac{\Gamma(\alpha_3+\gamma_1) \Gamma(\alpha_4+\gamma_1)
      \Gamma(\alpha_1+\gamma_2+m)\Gamma(\alpha_2+\gamma_2+m)}{\Gamma(1-\gamma_3+\gamma_1)
      \Gamma(1-\gamma_4+\gamma_1) \Gamma(m+1)}  \\
    &\phantom{=} \cdot \int \frac{\diff{}{v}}{2\pi i} \e{-i\pi v} \frac{\Gamma(\alpha_1+\gamma_1+v) \Gamma(\alpha_2+\gamma_1+v)  \Gamma(-v)}{\Gamma(\alpha_1+\alpha_2+\gamma_1+\gamma_2+m+v)} \\
    &\phantom{=} \cdot
    {}_{3}F_{2}\left(\genfrac{}{}{0pt}{0}{-v,\alpha_{3}+\gamma_1,\alpha_4+\gamma_1}{1-\gamma_{3}+\gamma_1,1-\gamma_4+\gamma_1};1\right)
  \end{aligned}
  \]
  We evaluate the $v$ integral by closing the contour to the right and
  obtain
  \begin{align}
    \label{eq:hm_n=4}
    h_m&= \frac{\Gamma(\alpha_3+\gamma_1) \Gamma(\alpha_4+\gamma_1)
         \Gamma(\alpha_1+\gamma_2+m)\Gamma(\alpha_2+\gamma_2+m)}{\Gamma(1-\gamma_3+\gamma_1)
         \Gamma(1-\gamma_4+\gamma_1) \Gamma(m+1)}  \\ 
       &\phantom{=} \cdot \sum_{\ell=0}^\infty \frac{\Gamma(\alpha_1+\gamma_1+\ell) \Gamma(\alpha_2+\gamma_1+\ell) }{\Gamma(\alpha_1+\alpha_2+\gamma_1+\gamma_2+m+\ell)\Gamma(\ell+1)} 
         {}_{3}F_{2}\left(\genfrac{}{}{0pt}{0}{-\ell,\alpha_{3}+\gamma_1,\alpha_4+\gamma_1}{1-\gamma_{3}+\gamma_1,1-\gamma_4+\gamma_1};1\right)\notag
  \end{align}
  \end{expl}

For the last example we will need a theorem of Slater which we state here
as lemma
\begin{lem}
  \label{lem:26}
  Let
  $p,q,r,s \in \mZ_{\geq 0}$, $a_1,\dots,a_p$,
  $b_1,\dots,b_q,c_1,\dots,c_r,d_1,\dots,d_s \in \mC$ and
  \[
  I(z) = \int \frac{\diff{}{t}}{2\pi i} \frac{\prod_{i=1}^p
    \Gamma(a_i+t) \prod_{j=1}^q \Gamma(b_j-t)}{\prod_{k=1}^r
    \Gamma(c_k+t) \prod_{\ell=1}^s
    \Gamma(d_\ell-t)}z^t\\
  \]
  Then
  \[
  \begin{aligned}
    I(z) &= \sum_{m=1}^q z^{b_m} \frac{\prod_{i=1}^p \Gamma(a_i+b_m)
      \prod_{j=1,j\not= m}^q \Gamma(b_j-b_m)}{\prod_{k=1}^r
      \Gamma(c_k+b_m)
      \prod_{\ell=1}^s \Gamma(d_\ell-b_m)} \\
    &\phantom{=} \cdot
    \,{}_{p+s}F_{q+r-1}\left(\genfrac{}{}{0pt}{0}{a_1+b_m,\dots,a_p+b_m,1+b_m-d_1,\dots,1+b_m-d_s}{c_1+b_m,\dots,c_r+b_m,1+b_m-b_1,\widehat{\dots},1+b_m-b_q};(-1)^{q+s}z\right)\\
  \end{aligned}
  \]
  provided that $\lambda-\mu \not \in \mZ$ for all
  $\lambda,\mu \in \{b_1,\dots,b_q,c_1\dots,c_r\}$ and
  \[
  \tfrac{1}{2}\pi | p+q-r-s| > |\arg z |, \quad q+r \geq p+s, \quad
  |z| < 1,
  \]
  Also,
  \[
  \begin{aligned}
    I(z) &= \sum_{m=1}^p z^{-a_m} \frac{\prod_{i=1,i\not= m}^p
      \Gamma(a_i-a_m) \prod_{j=1}^q \Gamma(b_j+a_m)}{\prod_{k=1}^r
      \Gamma(c_k-a_m)
      \prod_{\ell=1}^s \Gamma(d_\ell+a_m)} \\
    &\phantom{=} \cdot
    \,{}_{q+r}F_{p+s-1}\left(\genfrac{}{}{0pt}{0}{b_1+a_m,\dots,b_q+a_m,1+a_m-c_1,\dots,1+a_m-c_r}{d_1+a_m,\dots,d_s+a_m,1+a_m-a_1,\widehat{\dots},1+a_m-a_p};\frac{(-1)^{p+r}}{z}\right)\\
  \end{aligned}
  \]
  provided that $\lambda-\mu \not \in \mZ$ for all
  $\lambda,\mu \in \{a_1,\dots,a_p,d_1\dots,d_s\}$ and
  \[
  \tfrac{1}{2}\pi | p+q-r-s| > |\arg z |, \quad p+s \geq q+r, \quad
  |z| < 1.
  \]
  In addition, both formulas are valid for $z = 1$ if furthermore
  \[
  \Re\left(\sum_{k=1}^r c_k + \sum_{\ell=1}^s d_k - \sum_{i=1}^p a_i
    - \sum_{j=1}^q b_j \right) > 0\,.
  \]  
\end{lem}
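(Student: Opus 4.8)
The plan is to evaluate $I(z)$ by the residue theorem, closing the integration contour in one of the two half--planes and summing the residues of the enclosed poles. First I would record the pole structure of the integrand
\[
\Phi(t) = \frac{\prod_{i=1}^p \Gamma(a_i+t) \prod_{j=1}^q \Gamma(b_j-t)}{\prod_{k=1}^r \Gamma(c_k+t) \prod_{\ell=1}^s \Gamma(d_\ell-t)}\, z^t .
\]
Since $1/\Gamma$ is entire, the only poles come from the numerator: $\Gamma(a_i+t)$ contributes simple poles at $t=-a_i-\nu$, $\nu\in\mZ_{\geq 0}$, lying to the left of the contour, while $\Gamma(b_j-t)$ contributes simple poles at $t=b_j+\nu$, lying to the right. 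The hypothesis that $\lambda-\mu\not\in\mZ$ for $\lambda,\mu\in\{b_1,\dots,b_q,c_1,\dots,c_r\}$ guarantees that the right poles $t=b_m+\nu$ are simple and distinct and that the cancelling factors $\Gamma(c_k+t)$ in the denominator cause no collisions; the contour is chosen, as throughout the paper, so as to separate the left poles from the right poles.

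Next I would justify closing the contour. By Stirling's formula, along a large arc $\Phi(t)$ is controlled by $|z|^{\Re t}$, by an exponential factor $\e{-\frac{\pi}{2}|p+q-r-s|\,|\Im t|}$ coming from the net number of Gamma functions, and by the oscillatory factor $\e{i(\arg z)\Im t}$ from $z^t$. For the first formula I close to the right: the conditions $|z|<1$ and $q+r\geq p+s$ together with $\frac12\pi|p+q-r-s|>|\arg z|$ ensure that the contribution of the closing arcs tends to zero, so that $I(z)$ equals the (convergent) sum of residues at the right poles. The same estimate with the half--planes interchanged yields the closing--to--the--left case.

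Then I would compute the residues. At $t=b_m+\nu$ the factor $\Gamma(b_m-t)$ supplies the residue $(-1)^\nu/\nu!$, while every remaining Gamma shifts its argument by $\pm\nu$; writing $\Gamma(x+\nu)=(x)_\nu\Gamma(x)$ and $\Gamma(x-\nu)=(-1)^\nu\Gamma(x)/(1-x)_\nu$ turns the $\nu$--sum into a ${}_{p+s}F_{q+r-1}$ whose upper parameters are the $a_i+b_m$ and $1+b_m-d_\ell$, whose lower parameters are the $c_k+b_m$ and $1+b_m-b_j$ ($j\neq m$), and whose argument is $(-1)^{q+s}z$ (the sign $(-1)^{q+s}$ collecting the $(-1)^\nu$ factors from the $q-1$ remaining $\Gamma(b_j-\cdot)$, the $s$ factors $\Gamma(d_\ell-\cdot)$, and the residue itself). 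Summing over $m=1,\dots,q$ gives the first formula; the second follows identically from the left poles under the substitution $z\mapsto 1/z$, $t\mapsto -t$.

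Finally, for the value at $z=1$ the arc argument no longer forces decay, so instead I would invoke absolute convergence on the vertical contour: Stirling shows the integrand decays there like $|t|^{\Re(\sum c_k+\sum d_\ell-\sum a_i-\sum b_j)}$ up to bounded factors, so the stated condition $\Re\!\left(\sum c_k+\sum d_\ell-\sum a_i-\sum b_j\right)>0$ makes $I(1)$ converge absolutely; the hypergeometric series converges under the same condition, and a continuity (Abel) argument as $z\to 1$ from within $|z|<1$ identifies the two sides. I expect the asymptotic analysis of step two to be the main obstacle: the competition between the growth of the ratio of Gamma functions (governed by $p+q-r-s$) and the factor $z^t$ (governed by $|z|$ and $\arg z$) must be tracked carefully through Stirling's formula to pin down exactly the stated inequalities, and the boundary case $z=1$ genuinely requires the separate convergence estimate rather than a contour deformation.
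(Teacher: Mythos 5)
The paper does not actually prove this lemma: its ``proof'' consists of the citation to Slater's book together with the remark that Lemma~2.5(1) is a special case. Your residue--theorem argument is precisely the classical derivation that underlies the cited result, so in substance you are reconstructing the reference's proof rather than taking a different route, and the outline is correct: the poles of $\prod_j\Gamma(b_j-t)$ at $t=b_m+\nu$ are simple under the stated non--integrality hypotheses, the identities $\Gamma(x+\nu)=(x)_\nu\Gamma(x)$ and $\Gamma(x-\nu)=(-1)^\nu\Gamma(x)/(1-x)_\nu$ convert each residue series into the stated ${}_{p+s}F_{q+r-1}$, and your sign count $(-1)^{\nu(q-1)}\cdot(-1)^{\nu s}\cdot(-1)^{\nu}=(-1)^{\nu(q+s)}$ for the argument is right. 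Two points should be made explicit. First, the residue of $\Gamma(b_m-t)$ \emph{in the variable $t$} at $t=b_m+\nu$ is $-(-1)^\nu/\nu!$; the extra minus sign is cancelled by the clockwise orientation of the rightward--closed contour, and unless you say this the overall sign of the first formula appears to be off by $-1$. Second, as you yourself flag, the arc estimates are the technical heart of the argument, and the boundary case $z=1$ is more delicate than your sketch suggests: the naive Stirling estimate for absolute convergence of $I(1)$ on a vertical line produces an exponent that depends on the abscissa of the contour (unless $p=r$), so it does not by itself yield the clean stated condition $\Re\bigl(\sum_k c_k+\sum_\ell d_\ell-\sum_i a_i-\sum_j b_j\bigr)>0$; one has to combine an Abel--continuity argument with the convergence criterion for a balanced ${}_AF_{A-1}$ on the unit circle and track the parameter sums carefully. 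These are gaps of rigor in a sketch whose strategy is sound, not errors of approach.
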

\begin{proof}
  See~\cite{Slater:1966ab}. In fact, Lemma~\ref{lem:4}(1) is special
  case of this lemma.
\end{proof}

  \begin{expl}
  \label{expl:29}
  Finally, for $n=4, p=3$ we have
  \begin{equation}
    \label{eq:G3_n=4}
    G_3(z) = z^{\gamma_1} \sum_{m=0}^\infty k_m\, (1-z)^m
  \end{equation}
  with
  \begin{align}
    \label{eq:km-int}
    k_m&= \frac{\Gamma(\alpha_1+\gamma_2) \Gamma(\alpha_2+\gamma_3+m)
         \Gamma(\alpha_3+\gamma_3+m)
         \Gamma(\alpha_4+\gamma_1)}{\Gamma(1-\alpha_4-\gamma_4)
         \Gamma(m+1)} \\ 
       &\phantom{=} \cdot \int 
         \frac{\diff{}{v}}{2\pi i} \e{-i\pi v} 
         \frac{\Gamma(\alpha_1+\gamma_1+v)\Gamma(1-\alpha_4-\gamma_4+v) \Gamma(-v)}{\Gamma(1-\gamma_4+\gamma_1+v)}\notag\\ 
       &\phantom{=} \cdot \int \frac{\diff{}{s}}{2 \pi i}
         \e{-i\pi s}
         \frac{\Gamma(\alpha_2+s)\Gamma(\alpha_3+s)  \Gamma(\gamma_2-s)
         \Gamma(\gamma_1+v-s) }{\Gamma(\alpha_2+\alpha_3+\gamma_3+m+s)
         \Gamma(\alpha_1+\gamma_1+\gamma_2+v-s)}\notag
  \end{align}
  where we have used~\eqref{eq:B3} and Gauss'
  formula~\eqref{eq:Gauss}. The two integrals can be evaluated with
  the help of Lemma~\ref{lem:26}. In particular, if
  $\Re(\alpha_1+\gamma_3+m) > 0$ and if
  $\alpha_1,\dots,\alpha_3,\gamma_1,\dots,\gamma_3$ are such that
  there are only simple poles, we obtain for the $s$ integral in
  \[
  \begin{aligned}
    &\int \frac{\diff{}{s}}{2 \pi i}\e{- \pi i
      s}\frac{\Gamma(\alpha_2+s)\Gamma(\alpha_3+s) \Gamma(\gamma_1-s)
      \Gamma(\gamma_2+v-s)}{\Gamma(\alpha_2+\alpha_3+\gamma_3+m+s)
      \Gamma(\alpha_1+\gamma_1+\gamma_2+v-s)}\\
    &= \e{-i\pi\alpha_2}
    \frac{\Gamma(\alpha_3-\alpha_2)\Gamma(\gamma_1+\alpha_2)\Gamma(\gamma_2+v+\alpha_2)}{\Gamma(\alpha_3+\gamma_3+m)\Gamma(\alpha_1+\alpha_2+\gamma_1+\gamma_2+v)}\\
    &\phantom{=} \cdot
    {}_3F_2\left(\genfrac{}{}{0pt}{0}{\gamma_1+\alpha_2,\gamma_2+v+\alpha_2,1-\alpha_3-\gamma_3-m}{1+\alpha_2-\alpha_3,\alpha_1+\alpha_2+\gamma_1+\gamma_2+v};1\right)\\
    &\phantom{=} + \e{-i\pi\alpha_3}
    \frac{\Gamma(\alpha_2-\alpha_3)\Gamma(\gamma_1+\alpha_3)\Gamma(\gamma_2+v+\alpha_3)}{\Gamma(\alpha_2+\gamma_3+m)\Gamma(\alpha_1+\alpha_3+\gamma_1+\gamma_2+v)}\\
    &\phantom{=} \cdot
    {}_3F_2\left(\genfrac{}{}{0pt}{0}{\gamma_1+\alpha_3,\gamma_2+v+\alpha_3,1-\alpha_2-\gamma_3-m}{1-\alpha_2+\alpha_3,\alpha_1+\alpha_3+\gamma_1+\gamma_2+v};1\right)\\
  \end{aligned}
  \]
  Evaluting the $v$ integral in~\eqref{eq:km-int} with the residue
  theorem finally yields
  \begin{align}
    \label{eq:km}
    k_m&= \frac{\Gamma(\alpha_1+\gamma_2) \Gamma(\alpha_2+\gamma_3+m)
         \Gamma(\alpha_3+\gamma_3+m)
         \Gamma(\alpha_4+\gamma_1)}{\Gamma(1-\alpha_4-\gamma_4)
         \Gamma(m+1)} \\ 
       &\phantom{=} \cdot \sum_{\ell=0}^\infty 
         \frac{\Gamma(\alpha_1+\gamma_1+\ell)\Gamma(1-\alpha_4-\gamma_4+\ell) }{\Gamma(1-\gamma_4+\gamma_1+\ell) \Gamma(\ell+1)}\notag\\ 
       &\phantom{=} \cdot \left( 
         \e{-i\pi\alpha_2}
         \frac{\Gamma(\alpha_3-\alpha_2)\Gamma(\gamma_1+\alpha_2)\Gamma(\gamma_2+\ell+\alpha_2)}{\Gamma(\alpha_3+\gamma_3+m)\Gamma(\alpha_1+\alpha_2+\gamma_1+\gamma_2+\ell)}
         \right. \notag\\
       &\phantom{=} \cdot
         {}_3F_2\left(\genfrac{}{}{0pt}{0}{\gamma_1+\alpha_2,\gamma_2+\ell+\alpha_2,1-\alpha_3-\gamma_3-m}{1+\alpha_2-\alpha_3,\alpha_1+\alpha_2+\gamma_1+\gamma_2+\ell};1\right)
         \notag\\ 
       &\phantom{=} + \e{-i\pi\alpha_3}
         \frac{\Gamma(\alpha_2-\alpha_3)\Gamma(\gamma_1+\alpha_3)\Gamma(\gamma_2+\ell+\alpha_3)}{\Gamma(\alpha_2+\gamma_3+m)\Gamma(\alpha_1+\alpha_3+\gamma_1+\gamma_2+\ell)}\notag\\
       &\phantom{=} \cdot \left. 
         {}_3F_2\left(\genfrac{}{}{0pt}{0}{\gamma_1+\alpha_3,\gamma_2+\ell+\alpha_3,1-\alpha_2-\gamma_3-m}{1-\alpha_2+\alpha_3,\alpha_1+\alpha_3+\gamma_1+\gamma_2+\ell};1\right) \right)
         \notag
  \end{align}
  \end{expl}

\section{Applications}
\label{sec:applications}

\subsection{The Frobenius method}
\label{sec:frobenius-method}

The linear differential equation~\eqref{eq:DE} can equivalently be
written as a first order matrix differential equation 

\begin{equation}
  \label{eq:DE1}
  \theta\, Y(z) = A(z) Y(z) 
\end{equation}
where
\[
\begin{aligned}
  A(z) &=
  \begin{pmatrix}
    0 & 1 & 0 & \cdots & 0\\
    0 & 0 & 1 & \cdots & 0\\
    \vdots & \vdots & \vdots & \ddots & \vdots\\
    0 & 0 & 0 & \cdots & 1\\
    a_1 & a_2 & a_3 & \cdots & a_n
  \end{pmatrix}, 
  &
  Y(z) &=
  \begin{pmatrix}
    y(z)\\
    \theta\,y(z) \\
    \vdots\\
    \theta^{n-2}\,y(z) \\
    \theta^{n-1}\,y(z) \\    
  \end{pmatrix}
\end{aligned}
\]
where 
\[a_i = \frac{e_{n+1-i}(\gamma_1,\dots,\gamma_{n-1}) - z\,
e_{n+1-i}(\alpha_1,\dots,\alpha_n)}{1-z},\]
$e_i(x_1,\dots,x_k)$ is the
elementary symmetric polynomial of degree $i$, and $y(z)$ is a
solution to~\eqref{eq:DE}. For linearly independent solutions
$y_1,\dots,y_n$ to~\eqref{eq:DE} we have linearly independent
solution vectors $Y_1, \dots, Y_n$ to~\eqref{eq:DE1}. We collect
these column vectors into a matrix $\Phi = \left(
  Y_1 \dots Y_n\right)$, known as a fundamental matrix, since it
satisfies $\theta\, \Phi = A \Phi$. Near any regular singularity
$z=z_0$ of~\eqref{eq:DE1}, there exist a constant $n\times n$ matrix
$R$, a real number $r>0$, and an $n\times n$ matrix $S$ of
singly-valued holomorphic functions in the annulus $0 < |z-z_0| < r$
such that the fundamental matrix takes the following
form~\cite{Coddington:1955ab}  
\[
  \Phi(z) = S(z) (z-z_0)^R.
\]
The matrix $R$ is determined by $A$, e.g. if the eigenvalues of $A(0)$
do not differ by positive integers, then $R=A(0)$. The matrix $S(z)$
can be determined as follows. For the first row of $S(z)$ we make a
power series ansatz and substitute it into~\eqref{eq:DE1}. This yields
recursion relations for the coefficients of the power series. These
recursion relations can be solved after choosing a number of
constants. This is known as the Frobenius method. The remaining rows
of $S(z)$ are obtained from the first row by successively acting with
$\theta$ on the first row.

Note that the fundamental matrix is not unique. Multiplication by any
invertible constant $n\times n$ matrix $C$ yields another fundamental
matrix. We will use this freedom in the examples to choose bases
which are easy to relate with the basis elements $y_j^*(z), y_{ij}(z),
G_p(z)$, or $\xi_n(z)$ given in terms of integral representations.

\subsection{The mirror quartic}
\label{sec:mirror-quartic}

The variation of polarized Hodge structure of the family $\pi: \cX \to
\mP^1$ of mirror quartics given as
\[
  \cX_z = \{{x_0}^4 + {x_1}^4 + {x_2}^4 + {x_3}^4 -
  4\,z^{-\frac{1}{4}}x_0x_1x_2x_3 = 0\} \subset \mP^3 
\]
leads to a Picard--Fuchs equation which is a hypergeometric
differential equation of order 3 with exponents~\cite{Nagura:1995kd}
\[
\begin{aligned}
  \alpha_i &= \frac{i}{4}, & \gamma_i &= 0, & i& =1,2,3, & \beta_3 &= \frac{1}{2}.
\end{aligned}
\]

% We now relate these computations to the Frobenius basis of solutions
% $\Phi_{0,ij}$ at $z=0$ and $\Phi_{1,ij}$ at $y=1-z=0$. 

A fundamental system at $z=0$ is $\Phi_0(z) = S_0(z) z^{R_0} 4^{-4R_0}
C_0$ with
\[
\begin{aligned}
  R_0 &=
  \begin{pmatrix}
    0 & 1 & 0 \\
    0 & 0 & 1 \\
    0 & 0 & 0
  \end{pmatrix},
  &
  C_0 &=
  \begin{pmatrix}
    1 & 0 & \frac{1}{4} \\
    0 & \frac{1}{2\pi i} & 0 \\
    0 & 0 & \frac{1}{(2\pi i)^2} 
  \end{pmatrix}
\end{aligned}
\]
and
\[
  \begin{aligned}
    S_{0,11} &= 1+{\tfrac {3}{32}}\,z+{\tfrac {315}{8192}}\,{z}^{2}+{\tfrac {5775}{262144
}}\,{z}^{3} +O(z^4) \\
    S_{0,12} &=  {\tfrac {13}{32}}\,z+{\tfrac {3069}{16384}}\,{z}^{2}+{\tfrac {176005}{
1572864}}\,{z}^{3} + O(z^4) \\
    S_{0,13} &=  {\tfrac {169}{2048}}\,{z}^{2}+{\tfrac {35841}{524288}}\,{z}^{3} + O(z^4)\\
  \end{aligned}
\]
A fundamental system at $y=1-z$ is $\Phi_1(y) = S_1(y) y^{R_1}$ with
\[
  R_1 =
  \begin{pmatrix}
    0 & 0 & 0 \\
    0 & 1 & 0 \\
    0 & 0 & \frac{1}{2} 
  \end{pmatrix}
\]
and
\[
  \begin{aligned}
    S_{1,11} &= 1-\tfrac{1}{32}\,{y}^{2}-{\tfrac {131}{3840}}\,{y}^{3}+O(y^4)\\
    S_{1,12} &= 1+{\tfrac {35}{48}}\,y+{\tfrac {665}{1152}}\,{y}^{2} + O(y^3)\\
    S_{1,13} &= 1+{\tfrac {11}{24}}\,y+{\tfrac
        {39}{128}}\,{y}^{2}+{\tfrac {1181}{5120}} \,{y}^{3} + O(y^4)\\
  \end{aligned}
\]
\begin{thm}
  \label{thm:25}
  The analytic continuation of $\Phi_0(z)$ to $z=1$ is determined by
  $\Phi_0(z) = \Phi_1(1-z) M_{10}$ with
  \[
    M_{10} =
    \begin{pmatrix}
      \frac{A}{2\,\sqrt{2}\pi} & -\frac{A}{4\pi i}  & 0\\
      \frac{2}{\sqrt{2}\pi}
      \left(\frac{3\,A}{64} + \frac{1}{A}\right) &  -\frac{1}{\pi i}
      \left(\frac{3\,A}{64} - \frac{1}{A}\right) & 0 \\
      -\frac{2}{\sqrt{2}\pi} & 0 & -\frac{1}{\sqrt{2}\pi}
    \end{pmatrix}
  \]
  where $A=\frac{\Gamma(\frac{1}{8})
    \Gamma(\frac{3}{8})}{\Gamma(\frac{5}{8}) \Gamma(\frac{7}{8})}$. 
\end{thm}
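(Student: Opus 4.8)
The plan is to identify the three columns of each Frobenius matrix with the integral-representation solutions of Section~\ref{sec:hyperg-funct}, and then transport the $z=0$ basis to $z=1$ using the continuation formulas already established. Reading off the first row of $\Phi_0(z)=S_0(z)\,z^{R_0}4^{-4R_0}C_0$ (the remaining rows being $\theta$-derivatives, and using $4^4=256$ so that $z^{R_0}4^{-4R_0}=(z/256)^{R_0}$), the three solutions at $z=0$ are the holomorphic period $\varpi_0=S_{0,11}$, the single-logarithmic solution $\tfrac1{2\pi i}(S_{0,11}\log(z/256)+S_{0,12})$, and the double-logarithmic solution $\tfrac14 S_{0,11}+\tfrac1{(2\pi i)^2}\bigl(\tfrac12 S_{0,11}\log^2(z/256)+S_{0,12}\log(z/256)+S_{0,13}\bigr)$. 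Since all $\gamma_i=0$, the set $\{\gamma_1,\gamma_2,\gamma_3\}$ is fully resonant with $q=n=3$, so by Proposition~\ref{prop:18} the solution space at $z=0$ has the basis $\{G_1,G_2,G_3\}$ with $G_1=y_1^*$; matching leading behaviour identifies $\varpi_0$ with a multiple of $G_1$, the single-log column with $G_2$, and the double-log column with a combination of $G_1,G_2,G_3$. Likewise, the columns of $\Phi_1(1-z)=S_1(1-z)(1-z)^{R_1}$ are the two solutions holomorphic at $z=1$ (exponents $0$ and $1$, normalised by $S_{1,11}=1+O((1-z)^2)$ and $S_{1,12}(1-z)=(1-z)+O((1-z)^2)$, so spanning the same space as $G_2,G_3$) together with the exponent-$\tfrac12$ solution $S_{1,13}(1-z)^{1/2}$, i.e.\ N\o rlund's $\xi_3$ up to a constant.

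With this dictionary I would compute $M_{10}$ column by column. For the first column I continue $\varpi_0\propto G_1=y_1^*$ by Corollary~\ref{cor:20}: the B\"uhring data are $a_i=\tfrac i4$, $b_1=b_2=1$ and $c=\beta_3=\tfrac12\notin\mZ$, so $\varpi_0$ splits into a holomorphic series $\sum_m g_m(0)(1-z)^m$ and a branch $(1-z)^{1/2}\sum_m g_m(\tfrac12)(1-z)^m$. Because $S_{1,12}(1-z)$ carries no constant term and $S_{1,11}$ no linear term, the $\phi_1^{(1)}$-coefficient is $g_0(0)$, the $\phi_1^{(2)}$-coefficient is $g_1(0)$, and the $\phi_1^{(3)}$-coefficient is $g_0(\tfrac12)$, each divided by the constant relating $G_1$ to $\varpi_0$. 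For the second column I use Example~\ref{expl:27}, equation~\eqref{eq:hm_n=3}: $G_2$ is purely a power series in $1-z$, so its $(1-z)^{1/2}$-component vanishes, which is exactly why the $(3,2)$-entry is $0$; the crucial point is that the scale $256=4^4$ built into $\Phi_0$ aligns the Frobenius single-log solution with $G_2$ \emph{without} a residual multiple of $\varpi_0$, which would otherwise reintroduce a $(1-z)^{1/2}$ term. The third column I would treat by N\o rlund's formula, Proposition~\ref{prop:16}(2): it expresses $\xi_3$ directly in the $z=0$ basis $y_j^*$, and inverting this relation (together with Lemma~\ref{lem:4}(2), whose polygamma coefficients supply the $\log 256$- and $\tfrac14$-type constants) shows that the double-logarithmic Frobenius column $\phi_0^{(3)}$ is a pure multiple of $\xi_3$, the holomorphic components cancelling, yielding the single nonzero entry $-\tfrac1{\sqrt2\pi}$.

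The analytic content then reduces to evaluating in closed form the Gauss-type series ${}_3F_2(\cdots;1)$ occurring in $g_0(0)$, $g_1(0)$ (equations~\eqref{eq:g0_0} and~\eqref{eq:g1_0}) and in~\eqref{eq:hm_n=3}. For $g_0(0)$ the relevant series is ${}_3F_2(\tfrac12,\tfrac14,\tfrac14;\tfrac34,1;1)$, which is summable by Watson's theorem and, after the duplication and reflection formulas for $\Gamma$, produces precisely $A=\Gamma(\tfrac18)\Gamma(\tfrac38)/(\Gamma(\tfrac58)\Gamma(\tfrac78))$; indeed one checks $g_0(0)=\tfrac12\sqrt\pi\,A$, and dividing by the normalisation $\Gamma(\tfrac14)\Gamma(\tfrac12)\Gamma(\tfrac34)=\sqrt2\,\pi^{3/2}$ gives the $(1,1)$-entry $A/(2\sqrt2\pi)$. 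The same normalisation turns $g_0(\tfrac12)=\Gamma(-\tfrac12)=-2\sqrt\pi$ (equation~\eqref{eq:g0_c}) into the $(3,1)$-entry $-2/(\sqrt2\pi)$. Throughout, the overall constants---the entries of $C_0$, the scale $256$, the phases $\e{-i\pi t}$ and the powers of $2\pi i$---must be tracked with care, since these are what distribute the real and imaginary parts correctly across $M_{10}$.

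I expect two steps to be the real obstacles. The first is the $p=n$ difficulty: the recurrence of Proposition~\ref{prop:6}, and hence Theorem~\ref{thm:22}, requires $p<n$, so the double-logarithmic solution $G_3$ cannot be continued by the main machinery. My way around this is to not continue $G_3$ at all, but to realise the third Frobenius column as $\xi_3$ directly via Proposition~\ref{prop:16}(2); alternatively one could expand $G_3$ by Lemma~\ref{lem:11} and evaluate the resulting Meijer $G$-value at $z=1$ with Slater's theorem, Lemma~\ref{lem:26}. The second obstacle is the closed-form evaluation of $g_1(0)$, whose series ${}_3F_2(-\tfrac12,\tfrac14,\tfrac14;\tfrac34,1;1)$ is no longer a direct Watson sum; here I would use a contiguous relation to reduce it to the $g_0(0)$ sum plus a correction, which is what produces the characteristic combination $\tfrac{3A}{64}+\tfrac1A$ in the $(2,1)$-entry (and, via~\eqref{eq:hm_n=3}, the combination $\tfrac{3A}{64}-\tfrac1A$ in the $(2,2)$-entry). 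The remaining labour is bookkeeping: matching the normalisations of $S_0,S_1,C_0$, fixing signs and phases, confirming the vanishing entries, and verifying that the assembled matrix is exactly the stated $M_{10}$.
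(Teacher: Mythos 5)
Your proposal follows essentially the same route as the paper: the same column-by-column identification of the Frobenius solutions with $G_1,G_2,G_3$, continuation of the holomorphic and single-logarithmic rows via Corollary~\ref{cor:20} and Example~\ref{expl:27}, and the same workaround for the $p=n$ obstruction by realising the double-logarithmic column as a multiple of $\xi_3$ through Proposition~\ref{prop:16}(2) combined with Lemma~\ref{lem:4}(2). The only cosmetic difference is that the paper sums the ${}_3F_2(\cdots;1)$ values by Dixon's identity and its Lavoie-et-al.\ contiguous extensions rather than Watson's theorem, but for these parameters both apply and your contiguous-relation plan for $g_1(0)$ amounts to the same computation.
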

\begin{proof}
  By evaluating the residues in the definition of $G_p(z)$ closing the
  contour to the right one finds that 
\[
  \begin{aligned}
    G_1(z) &= \Gamma(\tfrac{1}{4}) \Gamma(\tfrac{1}{2}) \Gamma(\tfrac{3}{4})\Phi_{0,11},\\
   \tfrac{1}{2\pi i} G_2(z) &= -\Gamma(\tfrac{1}{4}) \Gamma(\tfrac{1}{2}) \Gamma(\tfrac{3}{4})\Phi_{0,12},\\
    \tfrac{1}{(2\pi i)^2} G_3(z) &= \Gamma(\tfrac{1}{4}) \Gamma(\tfrac{1}{2}) \Gamma(\tfrac{3}{4}) \left(\Phi_{0,13} +
      \tfrac{1}{2}\Phi_{0,12} - \tfrac{1}{2} \Phi_{0,11} \right)  .\\
  \end{aligned}
\]
Corollary~\ref{cor:20} now yields
\[
\Gamma(\tfrac{1}{4}) \Gamma(\tfrac{1}{2}) \Gamma(\tfrac{3}{4})
\Phi_{0,11} = g_0(0) \Phi_{1,11} + g_1(0) \Phi_{1,12} +
g_0(\tfrac{1}{2}) \Phi_{1,13}
\]
where the expansion coefficients are determined
by~\eqref{eq:g0_c},~\eqref{eq:g0_0} and~\eqref{eq:g1_0} as 
\[
  \begin{aligned}
    g_0(0) &= \frac{\Gamma(\frac{1}{4})
      \Gamma(\frac{1}{2})^2}{\Gamma(\frac{3}{4})}
    \Hyp{\frac{1}{4},\frac{1}{2},\frac{1}{4}}{\frac{3}{4},1}{1}
    = \frac{\Gamma(\frac{1}{2})\Gamma(\frac{1}{8}) \Gamma(\frac{3}{8})}{2\,\Gamma(\frac{5}{8}) \Gamma(\frac{7}{8})}\\
    g_1(0) &=
    -\frac{\Gamma(\frac{5}{4})\Gamma(\frac{3}{2})\Gamma(-\frac{1}{2})}{\Gamma(
      \frac{3}{4})}
    \Hyp{-\frac{1}{2},\frac{1}{4},\frac{1}{4}}{\frac{3}{4},1}{1}\\
    &= 2\,\Gamma(\tfrac{1}{2})
    \left( \frac{3\,\Gamma(\tfrac{1}{8})\Gamma(\frac{3}{8})}{64\, \Gamma(\frac{5}{8}) \Gamma(\tfrac{7}{8})} + \frac{\Gamma(\tfrac{5}{8})\Gamma(\frac{7}{8})}{\Gamma(\frac{1}{8}) \Gamma(\tfrac{3}{8})}\right)\\
    g_0(\tfrac{1}{2}) &= -2\,\Gamma(\tfrac{1}{2})  
  \end{aligned}
\]
from which the first row of $M_{01}$ follows using $\Gamma(\frac{1}{4})\Gamma(\frac{3}{4})=\sqrt{2}\pi$.

Theorem~\ref{thm:22} (see Example~\ref{expl:27}) yields
\[
- 2 \pi i \Gamma(\tfrac{1}{4}) \Gamma(\tfrac{1}{2})
\Gamma(\tfrac{3}{4}) \Phi_{0,12} = h_0 \Phi_{1,11} + h_1 \Phi_{1,12} 
\]
where the expansion coefficients are determined by~\eqref{eq:hm_n=3} as
\[
  \begin{aligned}
    h_0 &=\Gamma(\tfrac{1}{4})^2\Gamma(\tfrac{1}{2})^2
    \Hyp{\frac{1}{4},\frac{1}{2},\frac{1}{4}}{\frac{3}{4},1}{1} =
    \frac{\Gamma(\frac{1}{4})\Gamma(\frac{1}{2})\Gamma(\frac{3}{4})\Gamma(\frac{1}{8})\Gamma(\frac{3}{8})}{2\,\Gamma(\frac{5}{8})\Gamma(\frac{7}{8})}\\
    h_1 &= \tfrac{1}{6}
    \Gamma(\tfrac{1}{4})^2\Gamma(\tfrac{1}{2})^2\Hyp{\frac{1}{4},\frac{1}{2},\frac{1}{4}}{\frac{7}{4},1}{1}\\
    &= 2\,\Gamma(\tfrac{1}{4})\Gamma(\tfrac{1}{2}) \Gamma(\tfrac{3}{4})
    \left(
      \frac{3\,\Gamma(\frac{1}{8})\Gamma(\frac{3}{8})}{64\,\Gamma(\frac{5}{8})\Gamma(\frac{7}{8})}
      -
      \frac{\Gamma(\frac{5}{8})\Gamma(\frac{7}{8})}{\Gamma(\frac{1}{8})\Gamma(\frac{3}{8})}
    \right)
  \end{aligned}
\]
from which the second row of $M_{01}$ follows.

It remains to explain the evaluation of the various ${}_3F_2$ at
$1$. The basic formula is Dixon's identity~\cite{Slater:1966ab}:
\begin{multline*}
  \,{}_3 F_{2}\left(\genfrac{}{}{0pt}{}{a_1,a_2,
      a_3}{1+a_1-a_2,1+a_1-a_3};1 \right) \\=
  \frac{\Gamma(1+\frac{a_1}{2})\Gamma(1+\frac{a_1}{2}-a_2-a_3)\Gamma(1+a_1-a_2)\Gamma(1+a_1-a_3)}{\Gamma(1+a_1)\Gamma(1+a_1-a_2-a_3)\Gamma(1+\frac{a_1}{2}-a_2)\Gamma(1+\frac{a_1}{2}-a_3)}
\end{multline*}
This identity can be used to evaluate $g_0(0)$ and $h_0$. There is a
generalization of this identity due to Lavoie et
al.~\cite{Lavoie:1994ab}. The two cases we need for $g_1(0)$ and
$h_1$ are
\[
\begin{gathered}
  \Hyp{a_1,a_2,a_3}{a_1-a_2,1+a_1-a_3}{1} = \frac
  {{2}^{-2\,a_{{3}}}\Gamma ( a_{{1}}-a_{{2}} ) \Gamma (
    a_{{1}}-a_{{3}}+1 ) }{\Gamma ( a_{{1}}-2\,a
    _{{3}}+1 ) \Gamma  ( a_{{1}}-a_{{2}}-a_{{3}}+1 ) }\\
  \cdot \left( {\frac {\Gamma ( \frac{a_1}{2}-a_{{3}}+\frac{1}{2} )
        \Gamma ( \frac{a_1}{2}-a_{{2}}-a_{{3}}+1 ) }{\Gamma (
        \frac{a_1}{2}+\frac{1}{2} ) \Gamma ( \frac{a_1}{2}-a_{{2}} )
      }}+{\frac {\Gamma ( \frac{a_1}{2}-a_{{3}}+1 ) \Gamma (
        \frac{a_1}{2}-a_{{2}}-a_{{3}}+\frac{1}{2} ) }{\Gamma (
        \frac{a_1}{2} ) \Gamma (
        \frac{a_1}{2}-a_{{2}}+\frac{1}{2} ) }} \right), \\
  \Hyp{a_1,a_2,a_3}{2+a_1-a_2,1+a_1-a_3}{1} = \frac
  {{2}^{1-2\,a_{{2}}}\Gamma ( a_{{1}}-a_{{3}}+1 ) \Gamma
    ( a_{{1}}-a_{{2}}+2 ) \Gamma ( a_{{2}}-1 )
  }{\Gamma ( a_{{1}}-2\,a_{{2}}+2 ) \Gamma (
      a_{{1}}-a_{{2}}-a_{{3}}+2 ) \Gamma ( a_{{2}} )
  } \\
  \cdot \left( -{\frac {\Gamma ( \frac{a_1}{2}-a_{{2}}+\frac{3}{2} )
        \Gamma ( \frac{a_1}{2}-a_{{3}}-a_{{2}}+2 ) }{\Gamma
        ( \frac{a_1}{2}+\frac{1}{2} ) \Gamma (
          \frac{a_1}{2}-a_{{3}}+1 ) }}+{\frac {\Gamma (
          \frac{a_1}{2}-a_{{2}}+1 ) \Gamma (
          \frac{a_1}{2}-a_{{3}}-a_{{2}}+\frac{3}{2} ) }{\Gamma (
          \frac{a_1}{2} ) \Gamma ( \frac{a_1}{2}-a_{{3}}+\frac{1}{2}
        ) }} \right),
\end{gathered}
\]
respectively. 

Finally, we have $\Phi_{1,13}(y) = \xi_3(z)$. Proposition~\ref{prop:16} with
$ \psi(x) = i\prod_{\nu=1}^3 \left(x-\omega^{-\nu}\right)$, $\omega^4=1$
yields $\psi(1) = 4i, \psi'(1) = 6i,  \frac{\psi''(1)}{2!} = 4i$ so
that 
\[\Phi_{1,13}(y) = -\frac{\Gamma(\frac{3}{2})}{\pi} \left( 2\,y^*_1(z) -3\, y^*_2(z)
      +2\,y^*_3(z) \right).\] 
Lemma~\ref{lem:4}(2) then yields $\Phi_{0,13}(z) = -\frac{1}{\Gamma(\frac{1}{4})\Gamma(\frac{3}{4})}\Phi_{1,13}(y)$. 
\end{proof}

\subsection{The mirror quintic}
\label{sec:mirror-quintic}

The variation of polarized Hodge structure of the family $\pi:\cX \to
\mP^1$ of mirror quintics given as
\[
  \cX_z = \{{x_0}^5 + {x_1}^5 + {x_2}^5 + {x_3}^5 + {x_4}^5 -
  5\, z^{-\frac{1}{5}}x_0x_1x_2x_3x_4 = 0\} \subset \mP^4 
\]
leads to a Picard--Fuchs equation which is a hypergeometric
differential equation of order 4 with exponents~\cite{Candelas:1991rm} 
\[
\begin{aligned}
  \alpha_i &= \frac{i}{5}, & \gamma_i &= 0, & i& =1,2,3,4, & \beta_4 &= 1.
\end{aligned}
\]
A fundamental system at $z=0$ is $\Phi_0(z) = S_0(z)z^{R_0}5^{-5R_0}C_0$ with
\[
\begin{aligned}
  R_0 &=
  \begin{pmatrix}
    0 & 1 & 0 & 0\\
    0 & 0 & 1 & 0\\
    0 & 0 & 0 & 1\\
    0 & 0 & 0 & 0
  \end{pmatrix},
  &
  C_0 &=
  \begin{pmatrix}
    1 & 0 & 0 & 0\\
    0 & \frac{1}{2\pi i} & 0 & 0\\
    0 & 0 &\frac{1}{(2\pi i)^2} &  0\\
    0 & 0 & 0 & \frac{1}{(2\pi i)^3} \\
  \end{pmatrix}
  \cdot
  \begin{pmatrix}
    1 & 0 & -\frac{25}{12} & \frac{200}{(2\pi i)^3} \zeta(3)\\
    0 & 1 & \frac{5}{2} & -\frac{25}{12}\\
    0 & 0 & 5 & 0\\
    0 & 0 & 0 & -5
  \end{pmatrix}
\end{aligned}
\]
and
\[
  \begin{aligned}
    S_{0,11} &=  1 + \tfrac{24}{625}\, z + \tfrac{4536}{390625}\,z^2 + O(z^3)\\
    S_{0,12} &=  \tfrac{154}{625}\, z + \tfrac{32409}{390625}\, z^2 + O(z^3)\\
    S_{0,13} &=  \tfrac{23}{125}\, z + \tfrac{168327}{1562500}\,z^2 + O(z^3)\\
    S_{0,14} &= -\tfrac{46}{125}\,z - \tfrac{26387}{312500}\,z^2 + O(z^3)
  \end{aligned}
\]
The choice of $C_0$ follows from~\cite{Hosono:2000eb}.
A fundamental system at $y=1-z$ is $\Phi_1(y) = S_1(y)y^{R_1}C_1$ with
\[
\begin{aligned}
  R_1 &=
  \begin{pmatrix}
    0 & 0 & 0 & 0 \\
    0 & 1 & 1 & 0\\
    0 & 0 & 1 & 0\\
    0 & 0 & 0 & 2\\
  \end{pmatrix},
  &
  C_1 &= \frac{\sqrt{5}}{4\pi^2} 
  \begin{pmatrix}
    1 & 0 & 0 & 0 \\
    0 & 1 & 0 & 0\\
    0 & 0 & 1 & 0\\
    0 & 0 & 0 & 1\\
  \end{pmatrix}
\end{aligned}
\]
and
\[
  \begin{aligned}
    S_{1,11} &= 1+{\tfrac {2}{625}}{y}^{3}+{\tfrac {97}{18750}}{y}^{4}+{\tfrac {2971}{
468750}}{y}^{5} + O(y^6)\\
    S_{1,12} &= 1+{\tfrac {7}{10}}y+{\tfrac {41}{75}}{y}^{2}+{\tfrac {1133}{2500}}{y}^{3
}+{\tfrac {6089}{15625}}{y}^{4}+{\tfrac {160979}{468750}}{y}^{5} + O(y^6)\\
    S_{1,13} &= -{\tfrac {23}{360}}{y}^{2}-{\tfrac {6397}{60000}}{y}^{3}-{\tfrac {333323
}{2500000}}{y}^{4}-{\tfrac {33777511}{225000000}}{y}^{5}+ O(y^6)\\
    S_{1,14} &= 1+{\tfrac {37}{30}}y+{\tfrac {2309}{1800}}{y}^{2}+{\tfrac {286471}{
225000}}{y}^{3}+{\tfrac {41932661}{33750000}}{y}^{4}+{\tfrac {237108737}
{196875000}}{y}^{5}+ O(y^6)\\
  \end{aligned}
\]
\begin{thm}
  \label{thm:30}
  Let $\Phi_{z_0}(z)$ be the fundamental matrices near $z_0=0,1$.
  The analytic continuation of $\Phi_0(z)$ to $z=1$ is determined by
  $\Phi_0(z) = \Phi_1(1-z) M_{10}$ with
  \[
    M_{10} =
    \begin{pmatrix}
     \medskip
      l_0 & -\frac{h_0}{2\pi i} & \frac{5\,k_0}{(2\pi i)^2} & 0\\
     \medskip
     w_0 & -\frac{h_1}{2\pi i} &\frac{5\,k_1}{(2\pi i)^2} & 2\pi i \\
     \medskip
     1 & 0 & 0 & 0 \\
     w_1 - \frac{7}{10}w_0 & -\frac{h_2}{2\pi i} + \frac{7}{10}\frac{h_1}{2\pi i}
     &\frac{5\,k_2}{(2\pi i)^2} - \frac{7}{10}\frac{5\,k_1}{(2\pi
       i)^2} & 0
    \end{pmatrix}
  \]
  where the real constants $l_0,w_0,w_1,h_0,h_1,h_2$ and the complex
  constants $k_0,k_1,k_2$ are explicitly given in the proof.
\end{thm}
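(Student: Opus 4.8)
The plan is to follow the strategy of the proof of Theorem~\ref{thm:25} for the mirror quartic, adapted to the order-$4$ equation with $\alpha_i=i/5$ and $\gamma_i=0$. Since all $\gamma_i$ vanish, $z=0$ is a point of maximal unipotent monodromy and the entire set $\{\gamma_1,\dots,\gamma_4\}$ is resonant, so $q=n=4$. First I would match the integral-representation solutions to the columns of the Frobenius matrix $\Phi_0$ at $z=0$: by Proposition~\ref{prop:18} and Lemma~\ref{lem:4}(2) the functions $G_1(z),\dots,G_4(z)$ form, up to an invertible triangular change of basis, the logarithmic solutions at $z=0$, and evaluating the defining Mellin--Barnes integrals of $G_p(z)$ by closing the contour to the right expresses each $\tfrac{1}{(2\pi i)^{p-1}}G_p(z)$ as an explicit combination of the $\Phi_{0,1j}$. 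Here one must carry along the specific normalisation $C_0$, whose entries $-\tfrac{25}{12}$, $\tfrac52$ and the $\zeta(3)$ term encode the integral structure of~\cite{Hosono:2000eb}.

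The decisive difference from the quartic is that now $c=b_1+b_2+b_3-a_1-a_2-a_3-a_4=1$ is a \emph{nonnegative integer}, equal to $\beta_4$. Accordingly, I would continue the holomorphic period $\Phi_{0,11}\sim G_1={}_4F_3$ to $z=1$ by Corollary~\ref{cor:21} with $c_0=1$ and $n=4$, which supplies the constants $l_0,w_0,w_1$ together with the logarithmic coefficients $q_0,q_1$ from~\eqref{eq:l0}--\eqref{eq:q1}. The solution $G_2$ is continued through Theorem~\ref{thm:22} in the explicit form of Example~\ref{expl:28}, giving $h_0,h_1,h_2$; the solution $G_3$ through Example~\ref{expl:29}, giving the complex constants $k_0,k_1,k_2$. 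For the last column I would continue the deepest logarithmic solution $\Phi_{0,14}\sim G_4$ by means of Proposition~\ref{prop:16}(2) for the fully resonant special solution, combined with Lemma~\ref{lem:4}(2), exactly as $\xi_3$ was treated in the quartic case; this is the origin of the single nonzero entry $2\pi i$ in the fourth column.

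It then remains to re-express these expansions in powers of $1-z$ in the chosen Frobenius basis $\Phi_1$, where $C_1=\frac{\sqrt5}{4\pi^2}\,\mathrm{Id}$. Because $\beta_4=1$ coincides with the integer exponent $1$ in the list $0,1,2,\beta_4$, the point $z=1$ is itself resonant and $R_1$ carries a $2\times2$ Jordan block coupling $\Phi_{1,12}$ and $\Phi_{1,13}$. The $\log(1-z)$ produced by Corollary~\ref{cor:21} must therefore be identified with the logarithmic Frobenius solution $\Phi_{1,13}$, which accounts for the entry $1$ in the first column and the vanishing of the third row in the remaining columns. Separating the holomorphic exponent-$1$ solution $\Phi_{1,12}$ from the exponent-$2$ solution $\Phi_{1,14}$ introduces the leading Taylor coefficient $\tfrac{7}{10}$ of $S_{1,12}$, producing the $\tfrac{7}{10}$-corrections in the bottom row. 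Assembling the four columns yields $M_{10}$ and the explicit values of all nine constants.

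I expect the main obstacle to be twofold. The more conceptual difficulty is the logarithmic bookkeeping at the resonant point $z=1$: one must track how the $\log(1-z)$ generated by the resonant continuation distributes over the Jordan block, and verify that the $\zeta(3)$ concealed in $C_0$ propagates consistently, contributing only through the holomorphic/logarithmic mixing and never appearing explicitly in $M_{10}$. The more computational difficulty is the evaluation at argument $1$ of the several ${}_3F_2$'s, and of the nested double integral defining $k_m$ in Example~\ref{expl:29}; this will require Dixon's theorem, its generalisations by Lavoie et al.~\cite{Lavoie:1994ab}, and Slater's Lemma~\ref{lem:26} to reduce everything to the Gamma-value closed forms recorded for $l_0,w_0,w_1,h_0,h_1,h_2,k_0,k_1,k_2$.
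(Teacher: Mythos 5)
Your proposal follows the paper's proof essentially step for step: the same residue identification of $G_1,\dots,G_4$ with the columns of $\Phi_0$ (with the prefactor $\Gamma(\tfrac15)\Gamma(\tfrac25)\Gamma(\tfrac35)\Gamma(\tfrac45)=\tfrac{4\pi^2}{\sqrt5}$), Corollary~\ref{cor:21} with $c_0=1$ for the first row, Examples~\ref{expl:28} and~\ref{expl:29} for the second and third rows, Proposition~\ref{prop:16}(2) combined with Lemma~\ref{lem:4}(2) for the fourth column, and the same $\tfrac{7}{10}$ Jordan-block bookkeeping at the resonant point $z=1$. The only inaccuracy is your expectation that Dixon's identity and its Lavoie-type generalisations will reduce the nine constants to Gamma-value closed forms: in the quintic case the paper leaves $l_0,w_m,h_m,k_m$ as infinite sums of ${}_3F_2$'s evaluated at $1$ (only Slater's Lemma~\ref{lem:26} is used, to collapse the $s$- and $v$-integrals) and explicitly states that no closed-form evaluations of these sums are known.
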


\begin{proof}
  By evaluating the residues in the definition of $G_p(z)$ closing the
  contour to the right one finds that 
\[
\begin{aligned}
    G_1(z) &= \tfrac{4\pi^2}{\sqrt{5}}\Phi_{0,11},\\
   \tfrac{1}{2\pi i} G_2(z) &= -\tfrac{4\pi^2}{\sqrt{5}} \Phi_{0,12},\\
    \tfrac{1}{(2\pi i)^2} G_3(z) &=\tfrac{4\pi^2}{\sqrt{5}}\Phi_{0,13},\\  
    \tfrac{1}{(2\pi i)^3} G_4(z) &= \tfrac{4\pi^2}{\sqrt{5}} \left(
      \tfrac{1}{5} \Phi_{0,14}-\tfrac{1}{5}
  \Phi_{0,13} + \Phi_{0,12}  \right).\\  
\end{aligned}
\]
where the prefactor comes from
$\Gamma(\tfrac{1}{5}) \Gamma(\tfrac{2}{5}) \Gamma(\tfrac{3}{5})
\Gamma(\tfrac{4}{5}) = \frac{4\pi^2}{\sqrt{5}}$.

Corollary~\ref{cor:21} yields
\[
\frac{ 4\pi^2}{\sqrt{5}} \Phi_{0,11} = l_0 + w_0
(1-z)+ q_0 (1-z)\log(1-z)+ w_1 (1-z)^2 + O\left((1-z)^3\right)
\]
and since
$\Phi_{1,12} = (1-z) + \frac{7}{10} (1-z)^2 + O\left((1-z)^3\right)$
this expression can be written as
\[
  \Phi_{0,11} = l_0 \Phi_{1,11} + w_0\Phi_{1,12} + q_0
  \Phi_{1,13} + \left(w_1-\tfrac{7}{10}w_0\right) \Phi_{1,14}
\]
from which the first row of $M_{01}$ follows. The explicit values of
the coefficients are obtained as follows. From~\eqref{eq:A4} we get
\[
A^{(4)}(k) =\frac{(\frac{3}{5})_k (\frac{2}{5})_k}{\Gamma(1+k)} \,{}_3F_2
\left(\genfrac{}{}{0pt}{}{\frac{1}{5},\frac{1}{5},-k}{\frac{3}{5},
    \frac{3}{5}-k};1\right) 
\]
Then,~\eqref{eq:l0},~\eqref{eq:w0},~\eqref{eq:q0} and~\eqref{eq:q1} yield
\begin{align}
  \label{eq:lqw}
  l_0 &= \frac{\Gamma(\frac{1}{5})}{\Gamma(\frac{3}{5})} \sum_{k=0}^\infty
  \frac{\Gamma(\frac{3}{5}+k)\Gamma(\frac{2}{5}+k)}{\Gamma(\frac{6}{5}+k)\Gamma(\frac{7}{5}+k)}
  \,{}_3F_2\left(\genfrac{}{}{0pt}{}{\frac{1}{5},\frac{1}{5},-k}{\frac{3}{5},\frac{3}{5}-k};1\right)\\
    q_0 &= 1,\notag\\
    w_0 & =- \psi(1) - \psi(2) + \psi(\tfrac{6}{5}) + \psi(\tfrac{7}{5})
    -\sum_{k=1}^\infty \frac{(\frac{3}{5})_k
      (\frac{2}{5})_k}{k\,(\frac{6}{5})_k(\frac{7}{5})_k}\,{}_3F_2
    \left(\genfrac{}{}{0pt}{}{\frac{1}{5},\frac{1}{5},-k}{\frac{3}{5},
        \frac{3}{5}-k};1\right)\notag\\
    w_1 &= -\tfrac{21}{25}\left( \vphantom{\sum_{k=2}^\infty} \psi(2) +
      \psi(3) - \psi(\tfrac{11}{5}) - \psi(\tfrac{12}{5}) \right.\notag\\
    &\phantom{=} \left. 
      -\tfrac{1}{6} \left( \psi(1) + \psi(3) - \psi(\tfrac{11}{5}) -
        \psi(\tfrac{12}{5}) \right) \right.\notag\\
    &\phantom{=} \left. - \sum_{k=2}^\infty \frac{(\frac{3}{5})_k
        (\frac{2}{5})_k}{k(k-1)(\frac{6}{5})_k(\frac{7}{5})_k}
      \,{}_3F_2
      \left(\genfrac{}{}{0pt}{}{\frac{1}{5},\frac{1}{5},-k}{\frac{3}{5},
          \frac{3}{5}-k};1\right) \right)\notag
\end{align}

Next, Theorem~\ref{thm:22} (see Example~\ref{expl:28}) yields
\[
G_2(z) = h_0 + h_1 (1-z) + h_2 (1-z)^2 + O\left((1-z)^3\right)
\]
Again, since $\Phi_{1,12} = (1-z) + \frac{7}{10} (1-z)^2 + O\left((1-z)^3\right)$,
we have that
\[
    \Phi_{0,12} = -
      \frac{h_0}{2\pi i} \Phi_{1,11} - \frac{h_1}{2\pi i}
      \Phi_{1,12} -
      \left(\frac{h_2}{2\pi i} - \frac{7}{10} \frac{h_1}{2\pi i} \right) \Phi_{1,14}\\
\]
from which the second row of $M_{01}$ follows. The explicit values of
the coefficients $h_m$ are determined by~\eqref{eq:hm_n=4} as
  \begin{align}
  \label{eq:hm}
    h_0 &=
    \Gamma(\tfrac{1}{5})^2\Gamma(\tfrac{2}{5})^2\Gamma(\tfrac{4}{5})\sum_{\ell=0}^\infty
    \frac{(\frac{1}{5})_\ell(\frac{2}{5})_\ell}{(\frac{3}{5})_\ell
      \ell!}\Hyp{-\ell,\frac{3}{5},\frac{4}{5}}{1,1}{1}\\
    h_1 &=
    \tfrac{2}{15}\Gamma(\tfrac{1}{5})^2\Gamma(\tfrac{2}{5})^2\Gamma(\tfrac{4}{5})
    \sum_{\ell=0}^\infty
    \frac{(\frac{1}{5})_\ell(\frac{2}{5})_\ell}{(\frac{8}{5})_\ell
      \ell!}\Hyp{-\ell,\frac{3}{5},\frac{4}{5}}{1,1}{1}\notag\\
    h_2 &=
    \tfrac{7}{100}\Gamma(\tfrac{1}{5})^2\Gamma(\tfrac{2}{5})^2\Gamma(\tfrac{4}{5})\sum_{\ell=0}^\infty
    \frac{(\frac{1}{5})_\ell(\frac{2}{5})_\ell}{(\frac{13}{5})_\ell
      \ell!}\Hyp{-\ell,\frac{3}{5},\frac{4}{5}}{1,1}{1}\notag
  \end{align}

Moreover, Theorem~\ref{thm:22} (see Example~\ref{expl:29}) also yields
\[
G_3(z) = k_0 + k_1 (1-z) + k_2 (1-z)^2 + O((1-z)^3)
\]
Hence, by the same reasoning as above
\[
  \begin{aligned}
    \Phi_{0,13}
    &= \frac {5\,k_0}{(2\pi i)^2}  \Phi_{1,11} + \frac {5\,k_1}{(2\pi i)^2} \Phi_{1,12} + \left(\frac {5\,k_2}{(2\pi i)^2} -\frac{7}{10} \frac {5\,k_1}{(2\pi i)^2}\right) \Phi_{1,14} \\
  \end{aligned}
\]
from which the third row of $M_{01}$ follows. The explicit values of
the coefficients $k_m$ are determined by~\eqref{eq:km} as
  \begin{align}
    \label{eq:km2}
    k_0 &= \Gamma(\tfrac{2}{5}) \Gamma(\tfrac{3}{5})
    \Gamma(\tfrac{4}{5}) \sum_{\ell=0}^\infty
    \frac{\Gamma(\frac{1}{5}+\ell)^2 }{\Gamma(\ell+1)^2} \\
    &\phantom{=} \cdot \left(
      \e{-i\pi\frac{2}{5}}
      \frac{\Gamma(\frac{1}{5})\Gamma(\frac{2}{5})\Gamma(\frac{2}{5}+\ell)}{\Gamma(\frac{3}{5})\Gamma(\frac{3}{5}+\ell)}
      {}_3F_2\left(\genfrac{}{}{0pt}{0}{\frac{2}{5},\frac{2}{5}+\ell,\frac{2}{5}}{\frac{4}{5},\frac{3}{5}+\ell};1\right)
    \right. \notag\\
    &\phantom{=} \left. +\, \e{-i\pi\frac{3}{5}}
      \frac{\Gamma(-\frac{1}{5})\Gamma(\frac{3}{5})\Gamma(\frac{3}{5}+\ell)}{\Gamma(\frac{2}{5})\Gamma(\frac{4}{5}+\ell)}
      {}_3F_2\left(\genfrac{}{}{0pt}{0}{\frac{3}{5},\frac{3}{5}+\ell,\frac{3}{5}}{\frac{6}{5},\frac{4}{5}+\ell};1\right)
    \right)  \notag\\
    k_1 &= \Gamma(\tfrac{4}{5}) \Gamma(\tfrac{7}{5})
    \Gamma(\tfrac{8}{5}) \sum_{\ell=0}^\infty
    \frac{\Gamma(\frac{1}{5}+\ell)^2 }{\Gamma(\ell+1)^2} \notag\\
    &\phantom{=} \cdot \left(
      \e{-i\pi\frac{2}{5}}
      \frac{\Gamma(\frac{1}{5})\Gamma(\frac{2}{5})\Gamma(\frac{2}{5}+\ell)}{\Gamma(\frac{8}{5})\Gamma(\frac{3}{5}+\ell)}
      {}_3F_2\left(\genfrac{}{}{0pt}{0}{\frac{2}{5},\frac{2}{5}+\ell,-\frac{3}{5}}{\frac{4}{5},\frac{3}{5}+\ell};1\right)
    \right. \notag\\
    &\phantom{=} \left. +\, \e{-i\pi\frac{3}{5}}
      \frac{\Gamma(-\frac{1}{5})\Gamma(\frac{3}{5})\Gamma(\frac{3}{5}+\ell)}{\Gamma(\frac{7}{5})\Gamma(\frac{4}{5}+\ell)}
      {}_3F_2\left(\genfrac{}{}{0pt}{0}{\frac{3}{5},\frac{3}{5}+\ell,-\frac{2}{5}}{\frac{6}{5},\frac{4}{5}+\ell};1\right)
    \right) \notag\\
    k_2 &= \tfrac{1}{2}\Gamma(\tfrac{4}{5})\Gamma(\tfrac{12}{5})
      \Gamma(\tfrac{13}{5}) \sum_{\ell=0}^\infty
    \frac{\Gamma(\frac{1}{5}+\ell)^2 }{\Gamma(\ell+1)^2} \notag\\
    &\phantom{=} \cdot \left(
      \e{-i\pi\frac{2}{5}}
      \frac{\Gamma(\frac{1}{5})\Gamma(\frac{2}{5})\Gamma(\frac{2}{5}+\ell)}{\Gamma(\frac{13}{5})\Gamma(\frac{3}{5}+\ell)}
      {}_3F_2\left(\genfrac{}{}{0pt}{0}{\frac{2}{5},\frac{2}{5}+\ell,-\frac{8}{5}}{\frac{4}{5},\frac{3}{5}+\ell};1\right)
     \right.\notag\\
    &\phantom{=} \left. +\, \e{-i\pi\frac{3}{5}}
      \frac{\Gamma(-\frac{1}{5})\Gamma(\frac{3}{5})\Gamma(\frac{3}{5}+\ell)}{\Gamma(\frac{12}{5})\Gamma(\frac{4}{5}+\ell)}
      {}_3F_2\left(\genfrac{}{}{0pt}{0}{\frac{3}{5},\frac{3}{5}+\ell,-\frac{7}{5}}{\frac{6}{5},\frac{4}{5}+\ell};1\right)
    \right) \notag
  \end{align}

Finally, we have $\Phi_{1,12}(y) = \frac{\sqrt{5}}{4\pi^2} \xi_4(z)$. Proposition~\ref{prop:16} with
$ \psi(x) = \prod_{\nu=1}^4 \left(x-\omega^{-\nu}\right)$, $\omega^5=1$
yields $\psi(1) = 5, \psi'(1) = 10, \frac{\psi''(1)}{2!} = 10,
\frac{\psi'''(1)}{3!} = 5$, so
that 
\[\Phi_{1,12}(y) = \tfrac{5}{2\pi i} \left( y^*_4(z) -2\, y^*_3(z) + 2\,y^*_2(z) - y_1^*(z) \right).\] 
Lemma~\ref{lem:4}(2) then yields $\Phi_{0,14}(z) = 2\pi i
\,\Phi_{1,12}(y)$. This result has also been obtained through a
monodromy argument in~\cite{Candelas:1991rm}. 

\end{proof}

At present, we are not aware of any identities that help evaluating
the infinite sums in $l_0$, $w_m$, $h_m$ and $k_m$. Numerical
evaluation shows, however, that the following identity should hold:
\[
  \Im k_m = \pi i h_m, m=0,1,2.
\]

\bibliography{books,math,Commun_Math_Phys,KEK,Miscellaneous,Nuclear_Physics_B,Other_Journals,Hodge-CM,Hodge_Structures,Modular,other_math,new_math,Archive-9108,Archive-9203,Archive-9301,Archive-9306,Archive-9310,Archive-9312,Archive-9403,Archive-9407,Archive-9409,Archive-9412,Archive-9502,Archive-9506,Archive-9509,Archive-9511,Archive-9601,Archive-9603,Archive-9605,Archive-9607,Archive-9608,Archive-9609,Archive-9611,Archive-9612,Archive-9701,Archive-9702,Archive-9703,Archive-9704,Archive-9705,Archive-9706,Archive-9707,Archive-9709,Archive-9710,Archive-9711,Archive-9712,Archive-9801,Archive-9802,Archive-9803,Archive-9804,Archive-9805,Archive-9806,Archive-9807,Archive-9809,Archive-9810,Archive-9811,Archive-9812,Archive-9901,Archive-9902,Archive-9903,Archive-9904,Archive-9905,Archive-9906,Archive-9907,Archive-9908,Archive-9909,Archive-9910,Archive-9911,Archive-9912,Archive-0001,Archive-0002,Archive-0003,Archive-0004,Archive-0005,Archive-0006,Archive-0007,Archive-0008,Archive-0009,Archive-0010,Archive-0011,Archive-0012,Archive-0102,Archive-0103,Archive-0104,Archive-0105,Archive-0106,Archive-0108,Archive-0109,Archive-0110,Archive-0111,Archive-0112,Archive-0202,Archive-0203,Archive-0204,Archive-0205,Archive-0206,Archive-0207,Archive-0208,Archive-0210,Archive-0211,Archive-0301,Archive-0302,Archive-0303,Archive-0304,Archive-0305,Archive-0308,Archive-0309,Archive-0310,Archive-0311,Archive-0312,Archive-0401,Archive-0402,Archive-0403,Archive-0404,Archive-0405,Archive-0406,Archive-0407,Archive-0408,Archive-0409,Archive-0410,Archive-0411,Archive-0412,Archive-0502,Archive-0503,Archive-0504,Archive-0505,Archive-0506,Archive-0507,Archive-0511,Archive-0512,Archive-0605,Archive-0606,Archive-0607,Archive-0609,Archive-0610,Archive-0611,Archive-0612,Archive-0701,Archive-0702,Archive-0705,Archive-0706,Archive-0708,Archive-0709,Archive-0803,Archive-0804,Archive-0805,Archive-0808,Archive-0812,Archive-0901,Archive-0904,Archive-0910,Archive-1205,Archive-1211,Archive-1308}
\bibliographystyle{amsplain}

\end{document}